\definecolor{ForestGreen}{cmyk}{0.92,0.00,0.59,0.25}
\definecolor{PineGreen}{cmyk}{0.92,0.00,0.59,0.25}
\definecolor{ForestGreen}{cmyk}{0.91,0.00,0.88,0.12}
\definecolor{RawSienna}{cmyk}{0.00,0.72,1.00,0.45}
\definecolor{Mulbery}{cmyk}{0.34,0.90,0.00,0.02}
\definecolor{Sepia}{cmyk}{0.00,0.83,1.00,0.70}
\definecolor{Mahogany}{cmyk}{0.00,0.85,0.87,0.35}
\newtheorem{theorem}{Theorem}[section]
\newtheorem{proposition}[theorem]{Proposition}
\newtheorem{lemma}[theorem]{Lemma}
\theoremstyle{definition}
\newtheorem{definition}[theorem]{Definition}
\newtheorem{remark}[theorem]{Remark}
\newtheorem{example}[theorem]{Example}
\def\bE{{\mathbb{E}}}
\def\bM{{\mathbb{M}}}
\def\bN{{\mathbb{N}}}
\def\bP{{\mathbb{P}}}
\def\bR{{\mathbb{R}}}
\def\bfA{{\mathbf{A}}}
\def\cB{{\mathcal{B}}}
\def\cC{{\mathcal{C}}}
\def\cE{{\mathcal{E}}}
\def\cF{{\mathcal{F}}}
\def\cL{{\mathcal{L}}}
\def\cS{{\mathcal{S}}}
\def\frA{{\mathfrak{A}}}
\def\frD{{\mathfrak{D}}}
\numberwithin{equation}{section}
\def\1{{\mathbf{1}}}
\newcommand{\form}{{\cal E}}
\newcommand{\dom}{{\cal F}}
\newcommand{\real}{{\mathbb R}}
\newcommand{\capa}{\mathrm{Cap}}
\newcommand{\ds}{\displaystyle}
\newcommand{\la}{\label}
\newcommand{\loc}{{\sf loc}}
\newcommand{\ov}{\overline}
\newcommand{\supp}{\mathrm{supp}\,}
\renewcommand{\Cap}{{\rm Cap}\,}
\newcommand{\mae}{m\text{-a.e.}}
\newcommand{\on}{\ \text{on}\ }
\newcommand{\qe}{\ {\rm q.e.}\ }
\title{On a convergence of positive continuous additive functionals in terms of their smooth measures}
\author{Yasuhito Nishimori\thanks{Department of General Education,
National Institute of Technology, Anan College, Anan, Tokushima,
774-0017, Japan
({\sf nishimori@anan-nct.ac.jp}).} \qquad 
Matsuyo Tomisaki\thanks{Department of Mathematics, Nara Women's
University, 
Kita-Uoya Nishimachi, Nara, 630-8506, Japan
({\sf tomisaki@cc.nara-wu.ac.jp}).}  \qquad  
Kaneharu Tsuchida\thanks{Department of Mathematics,
National Defense Academy,
Yokosuka, Kanagawa, 239-8686, Japan
({\sf tsuchida@nda.ac.jp}). } \\
\ and \  \ 
Toshihiro Uemura\thanks{Department of Mathematics, 
Faculty of Engineering Science, 
Kansai University, Suita, Osaka 564-8680, Japan
({\sf t-uemura@kansai-u.ac.jp}). }
}
\date{\today}
\begin{document}

\baselineskip=16pt

\maketitle
 \begin{abstract}
  A compactness of the Revuz map is established in the sense that the locally uniform convergence of a sequence of
  positive continuous additive functionals is derived in terms of their smooth measures.  
  To this end, we first introduce a metric on the space of
  measures of finite energy integrals and show some structures of the
  metric. Then, we show the compactness and  give some examples of positive
  continuous additive functionals that the convergence holds in terms of the associated smooth measures.
 \end{abstract}

\noindent
{\bf Keywords:} positive continuous additive functional, smooth measure, Dirichlet form. \\
 \\
{\bf AMS 2020 Subject Classifications:} Primary 60J46, 31C25;  Secondary 60F99.
 
\section{Introduction}
The positive continuous additive functionals ({\it abbrev.} {\sf PCAF}s) play an important role in the study of Markov processes  
and the Dirichlet spaces,  in particular, in the study of transformations of the Markov processes including time changes, 
 Feynman-Kac and Girsanov transformations (see {\it e.g.},  \cite{Sh, FOT, CF}).
There  is a one-to-one correspondence between the class of {\sf PCAF}s 
($\bfA_c^+$ in notation) and the class of nonnegative Borel measures charging no set of zero capacities,  called the 
{\it smooth  measures} ($\cS$ in notation). This correspondence is called the {\it Revuz correspondence}.

In the present paper  we will focus on a convergence of  {\sf PCAF}s.
In particular, we will  study the locally uniform convergence of {\sf PCAF}s in terms of some convergences 
of their smooth measures. Recently, in the case that the planer Brownian motion and the 
one-dimensional symmetric $\alpha$-stable process,  this type of the convergence has been studied 
in \cite{AK, GRV, O} to construct the Liouville Brownian motion and the Liouville $\alpha$-stable process. 
In \cite{CTU},  using smooth measures, $L^2$-convergences of Feynman-Kac semigroups defined 
through the {\sf PCAF}s are considered.

To explain a more specific  situation,  let $E$ be a locally compact separable metric space $E$ and $m$ 
a positive Radon measure on $E$ with full support.  We are given an $m$-symmetric 
Hunt process $\bM = (X_t, \bP_x)$ on $E$ and  the associated Dirichlet form $(\form, \dom)$ on 
$L^2(E;m)$ is assumed to be regular.  Then the {\sf PCAF}s and the smooth measures ${\cal S}$ 
generated by $\bM$ or $(\form, \dom)$ are defined.  
If $f$ is a nonnegative Borel function on $E$, then ${\sf A}_t = \int_{0}^{t} f(X_s) ds$ defines a 
{\sf PCAF} and the corresponding smooth measure (called the {\it Revuz measure} of $\{{\sf A}_t\}$) 
is $f(x)m(dx)$.  For the one-dimensional Brownian motion, the delta measure $\delta_a(dx)$ at a point 
$a\in \real$ is a smooth measure whose {\sf PCAF} is nothing but the local time $\{\ell(t,a)\}_{t\ge 0}$. 

In the  following, we will regard the Revuz correspondence as a map from the space $\cS$ to 
$\bfA_c^+$,  call it {\it Revuz map}, and study the convergence of {\sf PCAF}s via some convergences 
of their smooth measures from this point of view.  To the best of our knowledge, there does not seem 
to be much research on such ``topological properties of the mapping from $\cS$ to $\bfA_c^+$''. 
However, since ${\cal S}$  is quite large (larger  than the class of positive Radon measures charging 
no set of zero capacities) and so is $\bfA_c^+$,  it is  harder to put a topological structure  to this space 
$\bfA_c^+$ through ${\cal S}$ in general.  We therefore turn our attention to some restricted 
classes of smooth measures first.  One is the set of positive Radon measures which are of finite energy 
integrals ($\cS_0$ in notation). For any $\mu\in {\cal S}_0$ and $\alpha>0$, there exists a unique 
element $U_\alpha\mu \in\dom$, called the $\alpha$-potential of $\mu$, such that 
$$
\int_E \varphi(x) \mu(dx)=\form_\alpha(U_\alpha \mu ,\varphi):=\form(U_\alpha \mu, \varphi)+
\alpha (U_\alpha \mu, \varphi), \quad \varphi \in \dom \cap C_0(E),
$$
where $C_0(E)$ is the set of continuous functions on $E$ with compact support and 
$(\cdot, \cdot)$ means the $L^2$-inner product of $L^2(E;m)$. 
Using the $1$-potentials of measures $\mu$ and $\nu$ in ${\cal S}_0$, we can define a metric on 
${\cal S}_0$ as follows: 
$$
\rho(\mu, \nu):= \sqrt{\form_1\big(U_1\mu-U_1\nu, U_1\mu-U_1\nu)}, \quad \mu, \nu \in {\cal S}_0.
$$
Then we show that the space $\cS_0$ is a Polish space with the metric $\rho$ (Proposition 
\ref{prop-Polish}).  This result itself is well-known in the classical potential theory for the cases of the 
Newton potential  and the Riesz potential  (see \cite{Land}). Note that our result holds true for potential 
free situations as far as in a regular Dirichlet form setting.  We then consider the locally uniform 
convergence of {\sf PCAF}s for the $\bP_x$-almost surely (for quasi everywhere $x \in E$) in terms of 
their smooth measures in ${\cal S}_0$ through this metric $\rho$. After establishing a convergence 
result of {\sf PCAF}s via that of their smooth measures in ${\cal S}_0$ (Theorem \ref{thm-rho}), we turn 
to consider the case when the measures are in ${\cal S}$. Since a measure in ${\cal S}$ may not have 
the $1$-potential in general, we can not directly apply the method for proving the convergence in the 
case of the measures of finite energy integrals.  To overcome this difficulty, we take a proper increasing
 sequence of closed sets such that smooth measures restricted on each closed sets are of finite 
 energy integrals.  Then under some assumptions, we can obtain the convergence of {\sf PCAF}s 
 via the convergence of their smooth measures (Theorem \ref{thm-main-02}).

We next introduce a class of smooth measures that are absolutely continuous with respect to the 
basic measure $m$.  Then imposing some conditions on the Radon-Nikod\'ym densities,  
we obtain a convergence of the {\sf PCAF}s  associated with such measures in ${\cal S}$ 
(Theorem \ref{thm-02}).   In particular,  the convergence of {\sf PCAF}s holds when the sequence of 
the densities of smooth measures converges in $L^p$ for $p>1$ (Proposition \ref{prop-AC}). 
We stress that we can allow the exponents in the Lebesgue spaces for which the densities of 
smooth measures belong to may vary.

The structure of this paper is the following.  In section \ref{sec2}, we collect some basic notions and 
notations of the Dirichlet spaces and the Markov processes, especially, notions of capacity, smooth 
measures,  measures of finite energy integrals and additive functionals.  After this, we show in 
section \ref{sec3} that the space $\cS_0$ is indeed a Polish space with  the metric $\rho$ and 
the convergence of measures in ${\cal S}_0$  implies that the measures converge  {\it vaguely}. 
Conversely, if a sequence of measures  in ${\cal S}_0$ having the uniformly bounded 
$1$-potentials converges vaguely, the measures  converges {\it ``weakly in ${\cal S}_0$ with 
respect to $\rho$''} (Proposition \ref{prop-weak}).  In section \ref{sec4}, we prove the convergence 
of {\sf PCAF}s through the convergence of their measures in $\cS_0$ with respect to $\rho$ 
by using an idea in \cite{FOT}. We proceed to consider the convergence of {\sf PCAF}s through 
the convergence of their smooth measures in the class $\cS$ in section \ref{sec5}. 
In section \ref{sec6}, we assume that the measures have densities.  We introduce an approximating 
method of the {\sf PCAF} associated with $fm$ for a given density $f$.  Even though the density 
function may  not have $L^p$-integrability for all $p \in [1,\infty)$,  if the density function is locally 
integrable and it can be approximated by a sequence $f_n$ in the meaning of Definition \ref{def-A}, 
then the {\sf PCAF} of $f m$ is also approximated the {\sf PCAF}s of $f_n  m$.  This provides 
one perspective to observe the topological structure of the space of {\sf PCAF}s from the 
corresponding measure space.  In section \ref{sec7}, we first show some examples of the density 
functions concerned in section \ref{sec6}.  Then, we discuss on the {\sf PCAF}s which are determined 
by diffusion processes.


\section{Preliminaries}\label{sec2}
Let $E$ be a locally compact separable metric space and
$m$ a positive Radon measure on $E$ with full topological
support. Let $E_{\partial} = E \cup \{\partial\}$ be the
one-point compactification of $E$. When $E$ is already compact,
$\partial$ is regarded as an isolated point.  Let $\bM = (\Omega, \cF, \cF_t, X_t, \bP_x,  \zeta)$ be an 
$m$-symmetric Hunt process on $E$, where $\{\cF_t\}$ is the minimum augmented filtration and $\zeta$ 
is the life time associated with $\bM$,
that is, $\zeta(\omega) := \inf\{t \ge 0 : X_t = \partial\}$.

Let $\{p_t\}_{t \ge 0}$ be the transition semigroup of $\bM$, that is, $p_t f(x) = \bE_x[f(X_t)]$.
By \cite[Lemma 1.4.3]{FOT}, we know that $\{p_t\}_{t \ge 0}$ uniquely determines
a strongly continuous Markovian semigroup $\{T_t\}_{t \ge 0}$ on $L^2(E;m)$. 
Then the Dirichlet form on $L^2(E;m)$ of $\bM$ is defined by:
$$
\left\{
\begin{array}{rl}
  \ds \cE(u,v)  \!\!  & = \ds \lim_{t \downarrow 0} \frac{1}{t} (u - T_t u, v), \\
 & \vspace*{-6pt} \\
  \ds \cF \!\! & \ds = \left\{u \in L^2(E;m) :   \lim_{t \downarrow 0} \frac{1}{t}(u - T_t u, u) < \infty\right\}, \\
\end{array}
\right.
$$
 where $(f,g)$ (resp. $\|f\|_2$) means the $L^2$-inner product of $f,g$  (resp. the $L^2$-norm of $f$).  
 For $\alpha > 0$ and $u,v\in \dom$,  denote by $\form_\alpha(u,v):= \cE(u,v) + \alpha (u,v)$.  
 Throughout this paper, we assume that $(\cE,\cF)$ is {\it regular}  in the sense that 
 $\dom\cap C_0(E)$ is dense in $\dom$ with respect to $\sqrt{\form_1}$-norm and is also dense in 
 $C_0(E)$ with respect to sup-norm $\|\cdot\|_\infty$.
 %

Now we  define the ($1$-)capacity associated with the Dirichlet form $(\cE,\cF)$. 
For an open set $O\subset E$, 
$$
\Cap(O):=
 \begin{cases}
  \inf \big\{\cE_1(u,u) : u \in \cL_O \big\},  &  {\rm if} \ \ \cL_O \not= \varnothing, \\
  \infty,  &  {\rm if} \ \ \cL_O = \varnothing, 
 \end{cases}
$$
where $\cL_O := \{u \in \cF : u \ge 1\ \mae \on O\}$,  and  for any set $A \subset E$,
 \[
  \Cap(A) := \inf\big\{\Cap(O) : A \subset O,\ \text{$O$ is open} \big\}.
  \]
Then it is known that $\Cap$ is a Choquet capacity (see \cite{FOT}).  
We call a  set $A \subset E$ {\it exceptional} if $\Cap(A)=0$.  
A statement depending on $x \in A$ is said to hold q.e. on $A$ if there exists an 
exceptional set $N \subset A$ such that the statement is true for every $x \in A \setminus N$.
  Let $u$ be a numerical function defined q.e. on $E$.  The function $u$ is said to be {\it quasi-continuous} 
  if for any $\varepsilon > 0$, there exists an open set $G \subset E$ such that $\Cap(G) < \varepsilon$ and
  $u|_{E \setminus G}$ is finite and continuous.
  Here $u|_{E \setminus G}$ means the restriction of $u$ to
  the set $E \setminus G$.
  We say an increasing sequence $\{F_n\}$ of closed sets is
  a {\it nest} if $\Cap(E \setminus F_n)$ decreases to $0$ as $n \to \infty$.

  We now give the definition of smooth measures which play an important role in this paper.  
   A positive Borel measure $\mu$ on $E$ is called {\it smooth} if the following two conditions 
   are satisfied:
   \begin{description}
    \item[(S.1)] $\mu$ charges no set of zero capacity,
    \item[(S.2)] there exists an increasing sequence
	       $\{F_n\}$ of closed sets such that
	       \[
	       \mu(F_n) < \infty\ (n = 1,2, \cdots)
	       \quad \text{and} \quad
	       \lim_{n \to \infty} \Cap(K \setminus F_n) = 0
	       \quad \text{for any compact set $K$}.
	       \]
   \end{description}
   We denote the set of all smooth measures by $\cS$.
   An increasing sequence $\{F_n\}$ of closed sets
   satisfying the condition of the last part in {\bf (S.2)} is called  a {\it generalized nest}. 
A positive Radon measure $\mu$ on $E$ is said to be of 	{\it finite energy integrals} 
if there exists a positive constant $C>0$ such that 
	  \[
	  \int_{E} |v(x)| \mu(dx) \le C \sqrt{\cE_1(v,v)},
	  \quad v \in \cF \cap C_0(E).
	  \]
The set of all measures on $E$ which are of finite energy integrals is denoted by ${\cal S}_0$.
Then, according to the Riesz representation theorem, we find that  
  $\mu \in{\cal S}_0$ if and only if  there exists a unique element
  $U_{\alpha} \mu \in \cF$, called 
  the {\it  $\alpha$-potential} of $\mu$,  for each $\alpha > 0$ such that
   \[
    \cE_{\alpha}(U_{\alpha}\mu, v) =
    \int_{E} v(x) \mu(dx), \quad \text{for any }
    v \in \cF \cap C_0(E).
    \]
A subclass $\cS_{00}$ of $\cS_0$ is defined as follows:
	  \[
	  \cS_{00} = \big\{\mu \in S_0 :
	  \mu(E) < \infty,\ \|U_1 \mu\|_{\infty} < \infty \big\}, 
	  \]
	  where $\| \cdot \|_{\infty}$ is the $L^{\infty}$-norm with
	  respect to the measure $m$.   
    The following theorem shows some characterizations of the exceptional sets. 

    \begin{theorem}[{\bf \cite[Theorem 2.2.3]{FOT}}]
     \la{thm-equi}
     The following conditions are equivalent for a Borel set $B \subset E:$
     \begin{enumerate}[{\rm (i)}]
      \item $\Cap(B) = 0$,
      \item $\mu(B) = 0 $, for any $\mu \in \cS_0$,
      \item $\mu(B) = 0$, for any $\mu \in \cS_{00}$.
     \end{enumerate}
    \end{theorem}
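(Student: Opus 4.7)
The plan is to establish the chain of implications $\text{(i)} \Rightarrow \text{(ii)} \Rightarrow \text{(iii)} \Rightarrow \text{(i)}$. The middle implication is immediate from the inclusion $\cS_{00} \subset \cS_0$, so the content lies in the other two steps.

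For $\text{(i)} \Rightarrow \text{(ii)}$, I would fix $\mu \in \cS_0$ and consider its $1$-potential $U_1\mu \in \cF$. Given $\varepsilon > 0$, the assumption $\Cap(B)=0$ provides an open set $O \supset B$ with $\Cap(O)<\varepsilon$. Let $e_O \in \cF$ be the $1$-equilibrium potential of $O$, so that $e_O \ge 1$ q.e.\ on $O$ and $\cE_1(e_O,e_O)=\Cap(O)$. By regularity of $(\cE,\cF)$, one can approximate $e_O$ in $\sqrt{\cE_1}$-norm by nonnegative functions $v_n \in \cF \cap C_0(E)$; Cauchy--Schwarz and the defining identity for $U_1\mu$ then give
$$
\int_E v_n\, d\mu \;=\; \cE_1(U_1\mu, v_n) \;\le\; \sqrt{\cE_1(U_1\mu, U_1\mu)}\,\sqrt{\cE_1(v_n,v_n)}.
$$
Passing to a subsequence that converges q.e.\ to a quasi-continuous version of $e_O$ and applying Fatou on the left-hand side would yield $\mu(B) \le \mu(O) \le \sqrt{\cE_1(U_1\mu,U_1\mu)}\sqrt{\varepsilon}$, and then $\varepsilon \downarrow 0$ finishes the step.

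For the main implication $\text{(iii)} \Rightarrow \text{(i)}$, I would argue by contraposition. If $\Cap(B)>0$, the fact that $\Cap$ is a Choquet capacity and $B$ is Borel yields a compact set $K \subset B$ with $\Cap(K)>0$. The next step is to produce the $1$-equilibrium measure $\mu_K$, namely the positive Radon measure supported on $K$ whose $1$-potential coincides with the $1$-equilibrium potential $e_K$ of $K$. Its total mass is $\mu_K(E)=\cE_1(e_K,e_K)=\Cap(K)<\infty$, and $0 \le e_K \le 1$ q.e.\ gives $\|U_1\mu_K\|_\infty \le 1$, so $\mu_K \in \cS_{00}$; but $\mu_K(B) \ge \mu_K(K)=\Cap(K)>0$, which contradicts (iii).

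The hard part will be the construction and identification of $\mu_K$: one must know that for a compact set of positive capacity the equilibrium potential $e_K$ really is the $1$-potential of an honest positive Radon measure concentrated on $K$. I would obtain this from a Riesz-type representation of the nonnegative functional $v \mapsto \cE_1(e_K,v)$ on $\cF \cap C_0(E)$, using regularity of the form together with the variational characterization of $e_K$ (which forces the representing measure to be supported in $K$ via a standard ``outside of $K$, $e_K$ is $\cE_1$-harmonic'' argument). The quasi-continuity step underlying (i) $\Rightarrow$ (ii) is also delicate, but it becomes routine once one invokes the standard fact that every element of $\cF$ admits a quasi-continuous version arising as the q.e.\ limit of a subsequence of $\cE_1$-approximating functions from $\cF \cap C_0(E)$.
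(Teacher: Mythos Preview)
The paper does not supply its own proof of this statement; it is quoted verbatim from \cite[Theorem~2.2.3]{FOT} and used as a black box. Your outline follows the standard route taken there, and your $\text{(iii)} \Rightarrow \text{(i)}$ via the equilibrium measure of a compact subset is the right idea.

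There is, however, a genuine circularity in your $\text{(i)} \Rightarrow \text{(ii)}$. After passing to a subsequence $v_{n_k}$ converging q.e.\ to $\tilde e_O$, Fatou only gives $\int_E (\liminf_k v_{n_k})\,d\mu \le \|U_1\mu\|_{\cE_1}\sqrt{\varepsilon}$, and $\liminf_k v_{n_k}$ agrees with $\tilde e_O \ge 1_O$ only off some polar set $N$. To conclude $\mu(O) \le \|U_1\mu\|_{\cE_1}\sqrt{\varepsilon}$ you would need $\mu(N)=0$, which is precisely the assertion you are trying to establish. The clean fix is to approach from inside rather than outside: for any compact $K\subset B$ one has $\Cap(K)=0$, and in a regular Dirichlet form the capacity of a compact set admits the description $\Cap(K)=\inf\{\cE_1(v,v): v\in\cF\cap C_0(E),\ v\ge 1\text{ on }K\}$, so one can choose $v_\delta \in \cF\cap C_0(E)$ with $v_\delta \ge 1$ \emph{pointwise everywhere} on $K$ and $\cE_1(v_\delta,v_\delta)<\delta$. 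Then $\mu(K)\le\int_E v_\delta\,d\mu=\cE_1(U_1\mu,v_\delta)\le\|U_1\mu\|_{\cE_1}\sqrt{\delta}$ forces $\mu(K)=0$, and inner regularity of the Radon measure $\mu$ on the locally compact separable space $E$ yields $\mu(B)=0$.
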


   We now introduce the definition of a continuous additive functional.

    \begin{definition}
     \la{def-AF}
     An extended real valued
     (or numerical) %
     function ${\sf A}_t(\omega), \ t \ge 0,\ \omega \in \Omega$,  is 
      a {\it continuous additive functional} ({\sf CAF} for short) if the following conditions must hold:
     \begin{description}
      \item[(A.1)] for each $t \ge 0$, ${\sf A}_t(\cdot)$ is $\cF_t$-measurable,
      \item[(A.2)] there exist a set $\Lambda \in \cF_{\infty}$ and an exceptional set
		 $N \subset E$ such that $\bP_x(\Lambda) = 1$ for any
		 $x \in E \setminus N$ and $\theta_t \Lambda \subset \Lambda$, 
      \item[(A.3)] for each $\omega \in \Lambda$, the map $t\mapsto {\sf A}_t(\omega)$ is continuous
		 on $[0, \infty)$.
     \end{description}
     When a {\sf CAF} ${\sf A} = \{{\sf A}_t\}$ only takes values in $[0,\infty]$, we call ${\sf A}$ a
     {\it positive continuous additive functional} ({\sf PCAF} for short).
     The set of all {\sf CAF}s (resp. {\sf PCAF}s) is denoted
     by $\bfA_c$ (resp. $\bfA_c^+$). 
    \end{definition}

    Two {\sf CAF}s ${\sf A}^{(1)}$ and ${\sf A}^{(2)}$ are said to be {\it equivalent} if for each $t > 0$,
    $\bP_x({\sf A}_t^{(1)} = {\sf A}_t^{(2)}) = 1\ \qe x \in E$.
    If ${\sf A}^{(1)}$ and ${\sf A}^{(2)}$ are equivalent,
    we write ${\sf A}^{(1)} \sim {\sf A}^{(2)}$.
    It is clear that this binomial relation ``$\sim$'' defines an equivalence relation on $\bfA_c$.
    
    The next theorem characterizes a concrete relation between measures in $\cS$ and {\sf PCAF}s 
    (in $\bfA_c^+$ under this equivalence).

    \begin{theorem}[{\bf \cite[Theorem 5.1.4]{FOT}}]
     \la{thm-Revuz}
     The quotient space of $\bfA_c^+$ under the equivalence relation ``$\sim$'' and the family $\cS$
     are in one-to-one correspondence under the following relation:
     For $\mu \in \cS$ and ${\sf A} \in \bfA_c^+$, 
     \begin{equation}
      \lim_{t \downarrow 0} \frac{1}{t} \int_{E} \bE_x\left[\int_{0}^{t} f(X_s) d{\sf A}_s\right]m(dx)
       = \int_{E} f(x) h(x) \mu(dx)
       \la{eq-Revuz}
     \end{equation}
     where $h$ is any $\gamma$-excessive fiction $(\gamma \ge 0)$ and $f \in \cB_+(E)$. 
    \end{theorem}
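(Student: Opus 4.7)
The plan is to establish the bijection in two directions, using the $\alpha$-potential of a {\sf PCAF} as the bridge between path-space and measure-space. For ${\sf A}\in\bfA_c^+$, $f\in\cB_+(E)$ and $\alpha>0$, set $U^{\sf A}_\alpha f(x):=\bE_x[\int_0^\infty e^{-\alpha s}f(X_s)\,d{\sf A}_s]$. Additivity of $\sf A$ combined with the Markov property makes $U^{\sf A}_\alpha f$ an $\alpha$-excessive function admitting a quasi-continuous version in $\dom$ for sufficiently integrable $f$. Uniqueness up to $\sim$ follows: the family $\{U^{\sf A}_\alpha f\}_{f,\alpha}$ determines $\bE_x[{\sf A}_t]$ by Laplace inversion, and continuity together with additivity then recover the process ${\sf A}_t$ q.e., so that any two {\sf PCAF}s satisfying \eqref{eq-Revuz} for the same $\mu$ are equivalent.

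For the direction $\bfA_c^+\to\cS$, I would pick an $\alpha$-excessive function $h$ of the form $U_\alpha\varphi$ with $\varphi\in\cB_+\cap L^1(m)$ and consider
\[
\nu^{\sf A}_t(B):=\tfrac1t \int_E \bE_x\!\Bigl[\int_0^t\1_B(X_s)\,d{\sf A}_s\Bigr] h(x)\,m(dx), \qquad B\in\cB(E).
\]
The $\alpha$-excessiveness of $h$ makes $t\mapsto\nu^{\sf A}_t(B)$ monotone near $0$, so the limit $\mu^h_{\sf A}(B):=\lim_{t\downarrow 0}\nu^{\sf A}_t(B)$ exists; a monotone class argument gives $\sigma$-additivity, and dividing through by $h$ isolates an intrinsic Borel measure $\mu_{\sf A}$ satisfying \eqref{eq-Revuz}. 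Smoothness is then verified by producing a generalized nest on which $\mu_{\sf A}$ is finite, using quasi-continuity of $U^{\sf A}_1\1$ to replace its sublevel sets by closed approximants with $\Cap(K\setminus F_n)\to0$.

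The deeper direction $\cS\to\bfA_c^+$ is handled first for $\mu\in\cS_{00}$. A quasi-continuous version $g$ of $U_1\mu$ is bounded and $1$-excessive, so $e^{-t}g(X_t)$ is a right-continuous nonnegative supermartingale under $\bP_x$ for q.e.\ $x$. Its Doob--Meyer decomposition yields a continuous increasing predictable process $B_t$ with $e^{-t}g(X_t)=g(X_0)+M_t-B_t$; using the bilinear identity $\cE_1(g,v)=\int v\,d\mu$ for $v\in\dom\cap C_0(E)$ and Laplace inversion one identifies $B_t=\int_0^t e^{-s}\,d{\sf A}_s$ for a {\sf PCAF} $\sf A$, whence \eqref{eq-Revuz} follows. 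For general $\mu\in\cS$, the nest $\{F_n\}$ from (S.2) yields truncated measures $\mu_n\in\cS_{00}$ supported in $F_n$ whose associated {\sf PCAF}s ${\sf A}^{(n)}$ are consistent up to $\sigma_{E\setminus F_n}$; since $\Cap(E\setminus F_n)\to0$ forces $\sigma_{E\setminus F_n}\to\infty$ $\bP_x$-a.s.\ for q.e.\ $x$, the pointwise limit defines the desired $\sf A$. The main obstacle will be the $\cS_{00}$-step: the abstract Doob--Meyer theorem gives only a predictable increasing process attached to a fixed completed filtration, whereas (A.1)--(A.3) demand a single exceptional set $N\subset E$ on which strong additivity and pathwise continuity hold for all starting points simultaneously; coordinating exceptional sets across $x\in E\setminus N$, and checking independence of the limit from approximation choices in the nest extension, is where the real technical weight of the argument lies.
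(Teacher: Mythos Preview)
The paper does not prove this statement at all: Theorem~\ref{thm-Revuz} is quoted verbatim from \cite[Theorem~5.1.4]{FOT} and used as a black box, so there is no ``paper's own proof'' to compare against.

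Your outline is a recognisable route to the Revuz correspondence, but it differs in one substantive way from how \cite{FOT} actually proves the result (and from the machinery this paper borrows from \cite{FOT} in Section~\ref{sec4}). For the hard direction $\cS\to\bfA_c^+$, \cite{FOT} does \emph{not} invoke the Doob--Meyer decomposition of the supermartingale $e^{-t}g(X_t)$. Instead, for $\mu\in\cS_0$ with $u=U_1\mu$, it sets $g_n(x)=n\bigl(u(x)-nR_{n+1}u(x)\bigr)\ge 0$, so that $R_1g_n\to u$ $\cE_1$-strongly (this is exactly Lemma~\ref{lem-strong} of the present paper), and shows that the explicit {\sf PCAF}s $\int_0^t g_n(X_s)\,ds$ converge locally uniformly along a subsequence to a limit $\sf A$ which is then identified with $\mu$ via its $1$-potential. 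This approach stays entirely inside the Dirichlet-form calculus and delivers the additivity and the common exceptional set $N$ directly from the uniform limit, bypassing the issue you flag of upgrading a Doob--Meyer compensator (given only $\bP_x$-by-$\bP_x$) to a functional satisfying (A.1)--(A.3) simultaneously for q.e.\ $x$. Your Doob--Meyer route can be made to work, but the reconciliation of exceptional sets you identify as ``the main obstacle'' is genuinely delicate and is precisely what the $g_n$-approximation of \cite{FOT} is designed to avoid.
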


    The equation \eqref{eq-Revuz} is called the {\it Revuz
    correspondence}.
    In this paper, we call it the {\it Revuz map}
    by viewing this correspondence from $\cS$ to $\bfA_c^+$.

\section{Some properties of measures of finite energy integrals}\label{sec3}

    In this section, we consider some (metric) properties  of  the space of measures
    which are of finite energy integrals, ${\cal S}_0$. 
    As is mentioned in the preceding section, any measure $\mu$ in $\cS_0$ has
    the $1$-potential $U_1 \mu \in\dom$. So, using these $1$-potentials of measures we define a metric 
    on $\cS_0$.  To this end, let us denote $\| \cdot \|_{\cE_1}$  the norm on $\dom$ with respect to 
    $\cE_1$, that is,
    $$
    \| f \|_{\cE_1} := \sqrt{ \cE_1 (f,f)} =\sqrt{\form(f,f)+(f,f)}, \quad  f \in \cF.
    $$
Then we define a  function  $\rho : \cS_0 \times \cS_0 \to [0,\infty)$ by
    \begin{equation}
     \rho(\mu,\nu) := \|U_1 \mu - U_1 \nu \|_{\cE_1}, \quad \mu, \nu \in {\cal S}_0.
      \label{eq:def-rho}
    \end{equation}
We show that the function $\rho$ becomes a metric on $\cS_0$ and has  nicer properties.
    \begin{lemma}
     \la{lem-metric}
     The function $\rho$ defines a metric on $\cS_0$.
    \end{lemma}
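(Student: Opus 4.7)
The plan is to verify the three metric axioms for $\rho$ on $\cS_0$. Non-negativity $\rho(\mu,\nu) \ge 0$ and symmetry $\rho(\mu,\nu) = \rho(\nu,\mu)$ are immediate from the definition, since $\|\cdot\|_{\cE_1}$ is a norm on $\cF$. The triangle inequality follows at once by applying the triangle inequality of $\|\cdot\|_{\cE_1}$ to the decomposition
$$
U_1\mu - U_1\nu = (U_1\mu - U_1\lambda) + (U_1\lambda - U_1\nu), \quad \mu,\nu,\lambda \in \cS_0;
$$
note that no linearity of the potential map $\mu \mapsto U_1\mu$ is required at this stage.

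The substantive step is definiteness: showing $\rho(\mu,\nu) = 0 \Longrightarrow \mu = \nu$. If $\rho(\mu,\nu) = 0$, then $U_1\mu = U_1\nu$ as elements of $\cF$, so the defining property of the $1$-potential gives
$$
\int_E v(x)\, \mu(dx) = \cE_1(U_1\mu, v) = \cE_1(U_1\nu, v) = \int_E v(x)\, \nu(dx)
$$
for every $v \in \cF \cap C_0(E)$.

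To conclude $\mu = \nu$, I would extend this identity to all $v \in C_0(E)$ and then invoke the Riesz representation theorem for positive Radon measures. Fix $v \in C_0(E)$ with compact support $K$. By the regularity of $(\cE,\cF)$, the space $\cF \cap C_0(E)$ is uniformly dense in $C_0(E)$, and a standard cutoff argument (apply the density to a Urysohn-type function for $K$) supplies $\varphi \in \cF \cap C_0(E)$ with $0 \le \varphi \le 1$, $\varphi \equiv 1$ on $K$, and $\supp \varphi$ compact. Approximating $v$ uniformly by $v_n \in \cF \cap C_0(E)$ and using that $\cF \cap C_0(E)$ is closed under multiplication (a standard consequence of the Markov property of $(\cE,\cF)$), the products $\varphi v_n \in \cF \cap C_0(E)$ satisfy $\varphi v_n \to v$ uniformly with all supports contained in the fixed compact set $\supp \varphi$. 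Since $\mu$ and $\nu$ are Radon, hence finite on compacts, dominated convergence yields $\int v\, d\mu = \int v\, d\nu$; varying $v$ over $C_0(E)$, the Riesz representation theorem gives $\mu = \nu$.

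The only point requiring genuine care is this final extension from the test class $\cF \cap C_0(E)$ to $C_0(E)$: uniform density alone is not enough, since $\mu$ and $\nu$ need not be finite, so one must control the supports of the approximants in a common compact set, which is precisely the role of the cutoff function $\varphi$.
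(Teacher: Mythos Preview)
Your proof is correct and follows essentially the same approach as the paper's: both reduce non-negativity, symmetry, and the triangle inequality to properties of the norm $\|\cdot\|_{\cE_1}$, and both deduce definiteness by using $U_1\mu = U_1\nu$ to get $\int \varphi\, d\mu = \int \varphi\, d\nu$ for all $\varphi \in \cF \cap C_0(E)$, then appealing to regularity and the Radon property to conclude $\mu = \nu$. The only difference is that the paper compresses the extension from $\cF \cap C_0(E)$ to $C_0(E)$ into a single sentence, whereas you spell out the cutoff argument needed to keep supports inside a fixed compact set; your added care here is justified and the argument is sound.
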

    \begin{proof}
     For $\mu, \nu \in \cS_0$,
     it is clear that $\rho(\mu,\nu) \ge 0$ and 
     it holds that
     \[
     \rho(\mu, \nu) =
     \|U_1 \mu - U_1 \nu\|_{\cE_1}=
     \|U_1 \nu - U_1 \mu\|_{\cE_1}
     = \rho(\nu,\mu).
     \]
     For any $\xi \in \cS_0$, 
     \[
     \rho(\mu,\nu) = \|U_1 \mu - U_1 \nu\|_{\cE_1}
     \le \|U_1 \mu - U_1 \xi\|_{\cE_1} +
     \|U_1\xi - U_1 \nu \|_{\cE_1} = \rho(\mu,\xi) + \rho(\xi, \nu). 
     \]
     Hence we obtain the triangle inequality with respect to $\rho$.
     Finally, assume that $\rho(\mu,\nu) = 0$. Then $U_1 \mu = U_1\nu$. So it follows that  
     \[
     \int_{E} \varphi(x) \mu(dx) = \cE_1(U_1 \mu, \varphi)
     = \cE_1(U_1 \nu, \varphi) = \int_{E} \varphi(x) \nu(dx),\quad
     \text{for any $\varphi \in \cF \cap C_0(E)$}.
     \]
     This implies that $\mu = \nu$ because $\mu, \nu$ are
     Radon measures and the Dirichlet form $(\form, \dom)$ is regular. 
     Therefore $\rho$ defines a metric on $\cS_0$.
    \end{proof}
    \begin{remark}    \la{rem-rho-metric}
     For $\alpha > 0$, if we define a function $\rho_\alpha$ on ${\cal S}_0\times {\cal S}_0$ by 
     $$
     \rho_\alpha(\mu, \nu):=\| U_\alpha \mu -U_\alpha \nu\|_{\form_\alpha}
     :=\sqrt{\form_\alpha (U_\alpha\mu-U_\alpha\nu, U_\alpha\mu-U_\alpha\nu)}, 
     \quad \mu, \nu \in {\cal S}_0,
     $$
	we find that  
     $$
     \sqrt{1/(\alpha \vee 1)} \, \rho(\mu, \nu) \le  \rho_\alpha(\mu, \nu) \le \sqrt{(1/\alpha) \vee 1} 
     \, \rho(\mu, \nu), \quad \mu, \nu \in{\cal S}_0
     $$
    holds.  Then it  follows that $\rho_{\alpha}$ is also a metric on ${\cal S}_0$ and it is equivalent 
    to $\rho$ for each $\alpha>0$.  
    \end{remark}

\medskip
    Put $f = U_1 \mu$ for $\mu \in \cS_0$ and define for $n \in \bN$
    \begin{equation}
     g_n(x) = n(f(x) - nR_{n+1}f(x)), 
      \la{eq:approx-g}
    \end{equation}
    where $R_n$ denotes the $n$-resolvent associated with the process
    $\bM$. Since $f$ is $1$-excessive, we know that
    $g_n \ge 0\ \mae$ by \cite[Theorem 2.2.1]{FOT}.
    
    \begin{lemma}
     \la{lem-strong}
     Let $\mu$ be a measure in $\cS_0$ and put $f = U_1 \mu$. Then
     $R_1 g_n$ converges to $f$ in $\cE_1$-strongly as $n \to \infty$.
    \end{lemma}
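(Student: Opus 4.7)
The plan is to first manipulate $R_1 g_n$ into a simpler form using the resolvent equation, and then reduce to the well-known strong Yosida-type approximation $\beta R_\beta u \to u$ in $(\dom,\cE_1)$ as $\beta\to\infty$ for any $u\in\dom$.

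Set $f=U_1\mu\in\dom$. From the definition $g_n = nf - n^2 R_{n+1}f$, linearity of $R_1$ gives $R_1 g_n = n R_1 f - n^2 R_1 R_{n+1} f$. The Markovian resolvent satisfies the resolvent equation $R_\alpha - R_\beta = (\beta-\alpha) R_\alpha R_\beta$; specialising to $\alpha=1$ and $\beta=n+1$ yields $R_1 R_{n+1} f = \tfrac{1}{n}(R_1 f - R_{n+1}f)$. Substituting and cancelling produces the clean telescoping identity $R_1 g_n = n R_{n+1} f$.

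Hence the claim reduces to proving $n R_{n+1} f \to f$ in $\cE_1$-norm. Since $f\in\dom$, the standard Dirichlet-form fact that $\beta R_\beta u \to u$ strongly in $(\dom,\cE_1)$ as $\beta\to\infty$ for every $u\in\dom$ (see, e.g., \cite[Lemma 1.3.3]{FOT}) applies with $\beta=n+1$ and $u=f$. Writing $n R_{n+1}f = \tfrac{n}{n+1}\,(n+1)R_{n+1}f$ and combining $(n+1)R_{n+1}f\to f$ in $\cE_1$ with $n/(n+1)\to 1$ yields the desired convergence by an obvious triangle-inequality estimate.

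There is essentially no real obstacle here: the argument rests on an algebraic identity coming from the resolvent equation plus the classical strong Yosida convergence on $\dom$. The only point worth checking carefully is that the resolvent equation may indeed be applied to $f$ as elements of $L^2(E;m)$, but this is immediate since $f=U_1\mu\in\dom\subset L^2(E;m)$, and the $L^2$-resolvent $\{R_\alpha\}$ is exactly the one induced by $\{T_t\}$ via the usual Laplace transform identification.
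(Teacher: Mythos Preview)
Your proof is correct, and the resolvent identity $R_1 g_n = nR_{n+1}f$ is derived exactly as the paper does. From that point on, however, your route differs from the paper's. The paper exploits the $1$-excessivity of $f=U_1\mu$: it uses $g_n\ge 0$ and the monotone increase $nR_{n+1}f\uparrow f$, then performs a direct expansion of $\cE_1(R_1g_n-f,R_1g_n-f)$, bounding the cross term via $(g_n,R_1g_n)\le (g_n,f)=\cE_1(R_1g_n,f)$ and concluding from the \emph{weak} $\cE_1$-convergence of $R_1g_n$ to $f$ (\cite[Lemma~2.2.2]{FOT}). Your argument instead goes straight to the strong Yosida approximation $\beta R_\beta u\to u$ in $(\dom,\cE_1)$ for arbitrary $u\in\dom$ (\cite[Lemma~1.3.3]{FOT}), which already delivers strong convergence without invoking excessivity, positivity of $g_n$, or any auxiliary norm computation. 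Your approach is shorter and in fact more general (it applies to any $f\in\dom$, not just $1$-potentials); the paper's computation, on the other hand, keeps the argument self-contained at the level of the excessive-function machinery being developed in that section.
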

    \begin{proof}
     By the resolvent equation, the following identities hold:
	\begin{align*}
      R_1 g_n(x) &= n(R_1 f(x) - n R_{n+1} R_1 f(x)) \\
      &= n(R_1 f(x) - R_1 f(x) + R_{n+1}f(x)) \\
      &= nR_{n+1} f(x).
	\end{align*}
     Since $f$ is a $1$-excessive function (cf. \cite[Theorem 2.2.1]{FOT}), we know that
     \[
      R_1 g_n = n R_{n+1} f \uparrow f \quad (n \to \infty).
     \]
     Then it holds that 
	\begin{align*}
      \cE_1(R_1 g_n - f, R_1 g_n - f)
      &= \cE_1(R_1 g_n, R_1 g_n) - 2\cE_1(R_1 g_n, f) + \cE_1(f,f) \\
      &= (g_n, R_1 g_n) - 2\cE_1(R_1g_n, f) + \cE_1(f,f) \\
      &\le (g_n, f) - 2\cE_1(R_1 g_n, f) + \cE_1(f,f) \\
      &= - \cE_1(R_1 g_n, f) + \cE_1(f,f)  \ \to  \ 0 \quad (n \to \infty), 
	\end{align*}
     because $g_n$ is non-negative and
     $R_1 g_n \to f$ in
     $\cE_1$-weakly by \cite[Lemma 2.2.2]{FOT}.
    \end{proof}

    \begin{lemma}
     \la{lem-sep}
     The metric space $(\cS_0,\rho)$ is separable. 
    \end{lemma}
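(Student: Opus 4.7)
The plan is to exhibit the countable $\rho$-dense family $\{h_k\cdot m\}_{k\in\bN}$, where $\{h_k\}$ is any countable dense subset of the non-negative cone $L^2_+(E;m)$. The strategy proceeds in two parts: first, show that measures of the form $g\cdot m$ with $g\in L^2_+(E;m)$ are $\rho$-dense in $\cS_0$; then invoke separability of $L^2(E;m)$ to cut down to countably many such $g$.

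For the first part, I would check that $g\in L^2_+(E;m)$ implies $g\cdot m\in\cS_0$ with $U_1(g\cdot m)=R_1 g$. The finite-energy bound comes from
\[
\int_E |\varphi|\,g\,dm\le \|g\|_2\|\varphi\|_2\le \|g\|_2\sqrt{\form_1(\varphi,\varphi)},\qquad \varphi\in\dom\cap C_0(E),
\]
and the identification of the $1$-potential is immediate from $\form_1(R_1 g,\varphi)=(g,\varphi)$. Given $\mu\in\cS_0$, I then set $f=U_1\mu$ and let $g_n=n(f-nR_{n+1}f)$ as in \eqref{eq:approx-g}. Since $f,R_{n+1}f\in L^2(E;m)$ and $g_n\ge 0$ $m$-a.e., we have $g_n\in L^2_+(E;m)$, so by the previous step $U_1(g_n\cdot m)=R_1 g_n$, and Lemma \ref{lem-strong} gives
\[
\rho(\mu,g_n\cdot m)=\|U_1\mu-R_1 g_n\|_{\form_1}\to 0,
\]
proving the asserted density.

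For the countable reduction, the $L^2$-contractivity of $R_1$ yields, for $g,h\in L^2_+(E;m)$,
\[
\rho(g\cdot m,h\cdot m)^2=(g-h,R_1(g-h))\le \|g-h\|_2^2,
\]
so $g\mapsto g\cdot m$ is $1$-Lipschitz from $(L^2_+(E;m),\|\cdot\|_2)$ into $(\cS_0,\rho)$. Because $E$ is a separable metric space and $m$ is $\sigma$-finite, $L^2(E;m)$ is separable, hence so is $L^2_+(E;m)$; choosing a countable dense $\{h_k\}\subset L^2_+(E;m)$, the measures $\{h_k\cdot m\}_{k\in\bN}$ then form a countable $\rho$-dense subset of $\cS_0$. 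There is no serious obstacle; the real analytic content already sits in Lemma \ref{lem-strong}, and the remaining steps are standard resolvent/$L^2$ bookkeeping.
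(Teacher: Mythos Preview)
Your proof is correct and follows essentially the same route as the paper: both approximate an arbitrary $\mu\in\cS_0$ by $g_n\cdot m$ via Lemma \ref{lem-strong}, then use the contraction $\rho(g\cdot m,h\cdot m)\le\|g-h\|_2$ together with separability of $L^2(E;m)$ to pass to a countable family. Your two-step packaging (first $L^2_+$-measures are $\rho$-dense, then cut down via Lipschitz continuity) is slightly cleaner than the paper's interleaved version, but the mathematical content is the same.
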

    \begin{proof}
     Since $E$ is a locally compact separable metric space and
     $m$ is a Radon measure on $E$,
     we see that %
     the space $L^2(E;m)$ is separable.
     Let $\frD$ be a countable and dense
     subset of $L^2(E;m)$. For a measure $\mu \in \cS_0$,
     let $f$ be the $1$-potential of $\mu$, that is, $f = U_1 \mu$.
     Let $g_n$ be the function defined in \eqref{eq:approx-g}. 
     Then we know that $g_n \in \cF \subset L^2(E;m)$ with $g_n\ge 0$ for all $n \in \bN$.
     By Lemma \ref{lem-strong}, the following holds, 
     \[
      R_1 g_n \to f, \quad \cE_1\text{-strongly}.
     \]
     Hence we know that for any $\varepsilon > 0$, there exists a number
     $N \in \bN$ such that
     $\rho(g_n m, \mu) < \varepsilon/2$ for $n \ge N$. 
     Since $g_n$ is in $L^2(E;m)$, there exists a function $h_n \in \frD$ with $h_n\ge 0$ 
     such that $\|g_n - h_n\|_2 < \varepsilon/2$.
     Then noting that 
     \begin{align*}
      \rho(g_n m, h_n m)^2 
      &=
       \cE_1(R_1 g_n - R_1 h_n, R_1 g_n - R_1 h_n) 
       = \cE_1(R_1(g_n - h_n), R_1(g_n - h_n)) \\%
      &= \int_{E} (g_n(x) - h_n(x)) R_1(g_n - h_n)(x) m(dx) 
      \le \|g_n - h_n\|_2 \|R_1(g_n - h_n)\|_2 \\%
      &\le \|g_n - h_n\|_2^2 < \left(\frac{\varepsilon}{2}\right)^2,
     \end{align*}
     we obtain that $\rho(g_n m, h_n m) < \varepsilon/2$.
     If $n \ge N$, it holds that 
     \[
     \rho(\mu, h_n m) \le \rho(\mu,g_n m) + \rho(g_n m, h_n m)
     < \frac{\varepsilon}{2} + \frac{\varepsilon}{2} = \varepsilon.
     \]
     If we put $\cS_a$ by the set
     $\{\mu \in \cS_0 : \mu = f  m,\ f \in \frD_+\}$,
     $\cS_a$ is countable and dense in $\cS_0$.
     Therefore the separability of $(\cS_0,\rho)$ is proved.
    \end{proof}

     \begin{lemma}
      \la{lem-comp}
     The metric space $(\cS_0,\rho)$ is complete.
     \end{lemma}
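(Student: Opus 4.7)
The plan is to exploit the Hilbert-space structure of $(\cF,\sqrt{\cE_1})$ together with the regularity of $(\cE,\cF)$ and the Riesz representation theorem.

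Let $\{\mu_n\}\subset \cS_0$ be Cauchy with respect to $\rho$. Since $\rho(\mu_n,\mu_m)=\|U_1\mu_n - U_1\mu_m\|_{\cE_1}$ and $(\cF,\sqrt{\cE_1})$ is a Hilbert space, there exists $f\in\cF$ with $U_1\mu_n\to f$ strongly in $\cE_1$. The task is to realize $f$ as the $1$-potential of some $\mu\in\cS_0$. To that end, I consider the linear functional
\[
L(\varphi):=\cE_1(f,\varphi)=\lim_{n\to\infty}\cE_1(U_1\mu_n,\varphi)=\lim_{n\to\infty}\int_E \varphi(x)\,\mu_n(dx),\quad \varphi\in\cF\cap C_0(E),
\]
which is positive on the nonnegative cone since each $\mu_n$ is a positive measure.

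To upgrade $L$ to a Radon measure, I control it in the sup-norm on compact supports. For any compact $K\subset E$, the regularity of $(\cE,\cF)$ yields $\psi_K\in\cF\cap C_0(E)$ with $\psi_K\ge 1$ on $K$; then the pointwise bound $|\varphi|\le\|\varphi\|_\infty \psi_K$, valid for $\varphi\in\cF\cap C_0(E)$ with $\mathrm{supp}(\varphi)\subset K$, together with positivity of $L$, gives $|L(\varphi)|\le L(\psi_K)\|\varphi\|_\infty$. Using the sup-norm density of $\cF\cap C_0(E)$ in $C_0(E)$ (again by regularity) together with a cutoff in $\cF\cap C_0(E)$ to keep supports uniformly bounded, I extend $L$ to a positive linear functional on $C_0(E)$ that is sup-norm continuous on each subspace of functions supported in a fixed compact set. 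The Riesz representation theorem then produces a positive Radon measure $\mu$ on $E$ with $L(\varphi)=\int_E \varphi\,d\mu$ for every $\varphi\in C_0(E)$.

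Finally, the membership $\mu\in\cS_0$ follows from Cauchy--Schwarz together with the Markov-property bound $\cE_1(|\varphi|,|\varphi|)\le\cE_1(\varphi,\varphi)$:
\[
\int_E |\varphi|\,d\mu = L(|\varphi|)\le \|f\|_{\cE_1}\sqrt{\cE_1(|\varphi|,|\varphi|)}\le \|f\|_{\cE_1}\sqrt{\cE_1(\varphi,\varphi)},\quad \varphi\in\cF\cap C_0(E).
\]
Hence $U_1\mu\in\cF$ exists and satisfies $\cE_1(U_1\mu,\varphi)=\int_E \varphi\,d\mu=\cE_1(f,\varphi)$ on $\cF\cap C_0(E)$; by the $\cE_1$-density of $\cF\cap C_0(E)$ in $\cF$, $U_1\mu=f$, so $\rho(\mu_n,\mu)=\|U_1\mu_n-f\|_{\cE_1}\to 0$. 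The main obstacle I anticipate is the middle paragraph: passing from a functional defined only on $\cF\cap C_0(E)$ and controlled in the $\cE_1$-norm to one that is sup-norm continuous on $C_0(E)$ with compact supports, so that Riesz's theorem applies. Here both the positivity of $L$ inherited from the $\mu_n$ and the regularity of $(\cE,\cF)$ (for the cutoff $\psi_K$ and the sup-norm approximation) play an essential role.
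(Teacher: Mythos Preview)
Your proof is correct. It differs from the paper's in one key respect: after obtaining the $\cE_1$-limit $f\in\cF$, the paper simply invokes \cite[Theorem 2.2.1]{FOT} --- the characterization of $\alpha$-potentials as those $h\in\cF$ satisfying $\cE_\alpha(h,v)\ge 0$ for every nonnegative $v\in\cF\cap C_0(E)$ --- to conclude immediately that $f=U_1\nu$ for some $\nu\in\cS_0$. You instead construct the measure by hand: you show the functional $L(\varphi)=\cE_1(f,\varphi)$ is positive and locally sup-norm bounded via a cutoff $\psi_K$, extend it to $C_0(E)$ using regularity and the algebra property of $\cF\cap C_0(E)$, apply the Riesz representation theorem, and then verify $\mu\in\cS_0$ directly through Cauchy--Schwarz and the normal contraction property. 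Your route is essentially a self-contained proof of the relevant implication of \cite[Theorem 2.2.1]{FOT} folded into the completeness argument; the paper's version is shorter because it outsources exactly that step. What your approach buys is independence from that potential-theoretic characterization, at the cost of the extension argument you correctly flag as the delicate point (and which does go through, since the cutoff keeps supports uniformly compact and $\cF\cap C_0(E)$ is closed under products).
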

    \begin{proof}
     Let $\{\mu_n\} \subset \cS_0$ be a $\rho$-Cauchy sequence.
     Noting that 
     \[
      \rho(\mu_m, \mu_n) = \|U_1 \mu_m - U_1 \mu_n\|_{\cE_1}
     \]
     and $(\cF,\cE_1)$ is a Hilbert space, we know that
     there exists a function $h \in \cF$ such that
     \[
      \lim_{n \to \infty} \|U_1 \mu_n - h\|_{\cE_1} = 0.
     \]
     Hence we know %
     that for any $v \in \cF \cap C_0(E)$ with $v \ge 0$,
     \begin{equation}
      \lim_{n \to \infty} \cE_1(U_1 \mu_n, v) = \cE_1(h, v).
       \label{eq:rho-01}
     \end{equation}
     By \cite[Theorem 2.2.1]{FOT}, the left hand side of \eqref{eq:rho-01}
     is non-negative, so is the right hand side.
     Using \cite[Theorem 2.2.1]{FOT} again, we can see that
     $h$ is a $1$-potential, hence there exists $\nu \in \cS_0$
     such that $h = U_1 \nu$. This implies that
     \[
     \lim_{n \to \infty} \rho(\mu_n, \nu) = \lim_{n \to \infty}
     \|U_1 \mu_n - U_1 \nu\|_{\cE_1} = 0.
     \]
     This proof is completed.
    \end{proof}

    By Lemmas \ref{lem-sep} and \ref{lem-comp}, we obtain
    the following proposition.

     \begin{proposition}
      \la{prop-Polish}
      The space $\cS_0$ is a Polish space. 
     \end{proposition}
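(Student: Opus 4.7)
The plan is essentially to invoke the two lemmas immediately preceding the proposition, which were set up precisely for this purpose. Recall that a topological space is called \emph{Polish} if it is separable and completely metrizable. We already have a concrete metric $\rho$ on $\cS_0$ defined in \eqref{eq:def-rho}, so the only things that need to be verified are separability and completeness of $(\cS_0, \rho)$.

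First, I would cite Lemma \ref{lem-sep}, which produces a countable dense subset of $(\cS_0, \rho)$ by approximating any $\mu \in \cS_0$ via the $1$-excessive regularization $R_1 g_n$ of $U_1 \mu$ (this uses Lemma \ref{lem-strong}) and then approximating the density functions $g_n \in L^2(E;m)$ by elements of a fixed countable dense subset $\frD \subset L^2(E;m)$. Second, I would cite Lemma \ref{lem-comp}, where completeness is reduced to completeness of $(\cF, \cE_1)$ as a Hilbert space, together with the observation (via \cite[Theorem 2.2.1]{FOT}) that an $\cE_1$-limit of nonnegative $1$-potentials is again a $1$-potential, so the limiting element of $\cF$ is realized as $U_1 \nu$ for some $\nu \in \cS_0$.

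Combining the two lemmas, $(\cS_0, \rho)$ is a separable complete metric space, hence a Polish space by definition. No additional obstacle is expected, since both nontrivial steps have already been carried out as separate lemmas; the proposition is simply the clean packaging of Lemmas \ref{lem-sep} and \ref{lem-comp}.
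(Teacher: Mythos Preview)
Your proposal is correct and follows exactly the paper's approach: the paper states the proposition as an immediate consequence of Lemmas \ref{lem-sep} and \ref{lem-comp}, which is precisely what you do.
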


\begin{example}
\label{ex-density}
Let $\mu = f m$ and $\mu_n = f _n m$ for $n\in {\bN}$, where $f, \ f_n \in L^2(E;m) $ and 
$\| f- f_n \|_2 \to 0$ as $n \to \infty$. Then $ \mu,\ \mu_n \in {\cS}_0$ and 
$\rho( \mu, \mu_n) =\| R_1f -R_1 f_n \|_{\cE_1} \leq \| f - f_n \|_2$. 
Hence $ \rho( \mu , \mu_n) \to 0$ as $ n \to \infty$.
\end{example}

    In the rest of this section, we collect some convergence results of the measures of $\cS_0$  
    related to $\rho$.

    \begin{proposition}
     \label{prop-01}
     Assume that $\{\mu_n\}$ are measures in $\cS_0$ which converges to
     some $\mu \in \cS_0$ in $\rho$. Then it follows that
     $\{\mu_n\}$ converges to $\mu$ vaguely, that is, for any
     $\varphi \in C_0(E)$,
     \[
      \lim_{n \to \infty} \int_{E} \varphi(x) \mu_n(dx) = \int_{E}
     \varphi(x) \mu(dx).
     \]
    \end{proposition}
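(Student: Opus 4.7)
The plan is to exploit that convergence in $\rho$ is exactly $\cE_1$-strong convergence of the $1$-potentials, then extend from test functions in $\cF \cap C_0(E)$ to arbitrary $\varphi \in C_0(E)$ by a density/uniform-mass argument.

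First I would note the trivial case: for any $\varphi \in \cF \cap C_0(E)$, the defining identity of the $1$-potential gives
\[
\int_E \varphi \, d\mu_n = \cE_1(U_1 \mu_n, \varphi), \qquad \int_E \varphi \, d\mu = \cE_1(U_1 \mu, \varphi),
\]
and since $\rho(\mu_n,\mu) = \|U_1\mu_n - U_1\mu\|_{\cE_1}\to 0$, Cauchy--Schwarz in $\cE_1$ yields $\int_E\varphi\,d\mu_n \to \int_E \varphi\,d\mu$. In particular, $M := \sup_n \|U_1\mu_n\|_{\cE_1} < \infty$.

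Next I would establish a uniform mass bound on compact sets. Given $\varphi \in C_0(E)$ with support $K$, use regularity of $(\cE,\cF)$: by sup-norm density of $\cF \cap C_0(E)$ in $C_0(E)$ combined with Urysohn's lemma, choose $\psi \in \cF \cap C_0(E)$ with $0 \le \psi \le 1$ and $\psi \ge 1/2$ on $K$ (take $\psi_0 \in C_0(E)$ with $\psi_0 = 1$ on $K$, approximate uniformly within $1/4$, then apply $(\cdot \vee 0)\wedge 1$, which preserves $\cF \cap C_0(E)$ by the Markovian property). Then
\[
\mu_n(K) \le 2\int_E \psi \, d\mu_n = 2\,\cE_1(U_1\mu_n, \psi) \le 2M\|\psi\|_{\cE_1},
\]
so $\sup_n \mu_n(K) < \infty$.

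Finally I would pass to general $\varphi \in C_0(E)$: by regularity, pick a sequence $\varphi_k \in \cF \cap C_0(E)$ with $\|\varphi_k - \varphi\|_\infty \to 0$, and multiplying by a fixed cutoff $\chi \in \cF \cap C_0(E)$ with $\chi \equiv 1$ on a neighborhood of $K$ I may assume the supports $\supp \varphi_k$ lie in a common compact $K'$. Splitting
\[
\int_E \varphi \, d\mu_n - \int_E \varphi \, d\mu = \left(\int_E (\varphi-\varphi_k)\,d\mu_n\right) + \left(\int_E \varphi_k \, d(\mu_n-\mu)\right) + \left(\int_E (\varphi_k - \varphi)\,d\mu\right),
\]
the first and third terms are bounded by $\|\varphi-\varphi_k\|_\infty \cdot \sup_n \mu_n(K')$ and $\|\varphi-\varphi_k\|_\infty\cdot \mu(K')$, which can be made arbitrarily small by choosing $k$ large, and for each fixed $k$ the middle term tends to $0$ by the first step.

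The main technical obstacle is the extension step: having to produce both a uniform bound on $\mu_n$ of compact sets and a sup-norm approximation that stays inside $\cF \cap C_0(E)$ with controlled support. Both pieces hinge on the regularity of $(\cE,\cF)$ together with the Markovian property, and this is the only place where more than the abstract Hilbert-space convergence is used.
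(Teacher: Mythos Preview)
Your proof is correct and follows essentially the same approach as the paper's. Both arguments first verify convergence for $\varphi\in\cF\cap C_0(E)$ via the identity $\int\varphi\,d\mu_n=\cE_1(U_1\mu_n,\varphi)$ and Cauchy--Schwarz, then extend to general $\varphi\in C_0(E)$ by a three-term splitting using a cutoff in $\cF\cap C_0(E)$ to control supports and a uniform bound $\int\eta\,d\mu_n=\cE_1(\eta,U_1\mu_n)\le\|\eta\|_{\cE_1}\sup_n\|U_1\mu_n\|_{\cE_1}$; the only difference is that you isolate the uniform mass bound $\sup_n\mu_n(K')<\infty$ as a separate preliminary step, while the paper derives the analogous inequality inline when handling the third term.
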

    \begin{proof}
     Suppose that
     $\lim_{n \to \infty} \rho(\mu_n, \mu) =
     \|U_1 \mu_n - U_1  \mu \|_{\cE_1}
     = 0.$ Then the sequence $\big\{ \| U_1 \mu_n  \|_{\cE_1}  \big\}_{n\ge 1}$
     is bounded in $\bR$. Moreover, for any $\varphi \in \cF \cap
     C_0(E)$,
     \begin{equation}
       \int_{E} \varphi(x) \mu_n(dx) = \cE_1(\varphi, U_1 \mu_n)
     \xrightarrow{n \to \infty} \cE_1(\varphi, U_1 \mu) = \int_{E}
     \varphi(x) \mu(dx).
     \label{eq:prop-01-1}
     \end{equation}
     By the regularity of the Dirichlet form $(\cE, \cF)$, we find that
     for any $\varphi \in C_0(E)$, there exists a sequence
     $\{\varphi_{\ell}\} \subset \cF \cap C_0(E)$ such that
     \[
      \lim_{\ell \to \infty} \|\varphi - \varphi_{\ell}\|_{\infty} = 0.
     \]
     For any compact set $K$ and a relatively compact open set $G$ with
     $\supp {\varphi} \subset K^o \subset K \subset G$,
     we can take a function $\eta \in \cF \cap C_0(E)$ such that
     \[
      \eta = 1\ \text{on}\ K,\ \eta = 0\ \text{on}\ G^c\ \text{and}\ 0
     \le \eta \le 1\ \text{on}\ E.
     \]
     Note that $\varphi_{\ell} \eta \in \cF \cap C_0(E)$ and $\varphi =
     \varphi \eta$. Then we find that
	\begin{align}
      \left|\int_{E} \varphi(x) \mu(dx) - \int_{E} \varphi(x)
       \mu_n(dx)\right| 
       &= \left|\int_{E} \varphi(x) \eta(x) \mu(dx) -
			    \int_{E} \varphi(x) \eta(x)
			    \mu_n(dx)\right| \nonumber \\
      &\le
      \left|\int_{E} \varphi(x) \eta(x) \mu(dx) - \int_{E}
	     \varphi_{\ell}(x) \eta(x) \mu(dx)\right| \nonumber \\
      &  \quad + \left|\int_{E} \varphi_{\ell}(x) \eta(x) \mu(dx) - \int_{E}
	      \varphi_{\ell}(x) \eta(x) \mu_n(dx)\right| \nonumber \\
      &  \quad + \left|\int_{E} \varphi_{\ell}(x) \eta(x) \mu_n(dx) -
	      \int_{E} \varphi(x) \eta(x) \mu_n(dx)\right| \nonumber \\
      &=: 
      {\rm (I)} + {\rm (II)} + {\rm (III)}. 
      \label{eq:prop-01-2}
      \end{align}
     By the definition of $\{\varphi_{\ell}\}$, 
     \begin{align*}
      {\rm (I)} &\le \int_{E} |\varphi(x) - \varphi_{\ell}(x)| \eta(x)
       \mu(dx) 
      \le \|\varphi - \varphi_{\ell}\|_{\infty} \int_{E} \eta(x)
       \mu(dx)\ \to\ 0 \quad (\ell \to \infty). %
          \end{align*}
     We see from \eqref{eq:prop-01-1} that %
     for each $\ell$, 
     \begin{align*}
      {\rm (II)} &= \left|\int_{E} \varphi_{\ell}(x)\eta(x) \mu(dx)-
		      \int_{E} \varphi_{\ell}(x) \eta(x)
		      \mu_n(dx)\right| 
      = \left|\cE_1(\varphi_{\ell}\eta,U_1\mu) -
	   \cE_1(\varphi_{\ell}\eta, U_1 \mu_n)\right|\ \to\ 0 \quad
      (n \to \infty). %
     \end{align*}
     Finally, 
     let us %
     consider ${\rm (III)}$. By the Schwarz inequality
     \begin{align*}
      {\rm (III)} &\le \int_{E} |\varphi_{\ell}(x) - \varphi(x)|
       \eta(x) \mu_n(dx)
      \le\ \|\varphi_{\ell} - \varphi\|_{\infty} \int_{E} \eta(x)
       \mu_n(dx) \\
      &= \|\varphi_{\ell} - \varphi\|_{\infty} \cE_1(\eta, U_1\mu_n)
      \le\ \|\varphi_{\ell} - \varphi\|_{\infty}
      \| \eta  \|_{\cE_1}  \| U_1 \mu_n  \|_{\cE_1}. 
     \end{align*}
     Since 
     $\{ \| U_1 \mu_n  \|_{\cE_1} \}_{n=1}^{\infty}$
     is bounded,
     we know that (III) tends to $0$ as
     $n \to \infty$ and $\ell \to \infty$.
     Therefore, taking the limit as $n \to \infty$ and $\ell \to \infty$
     in \eqref{eq:prop-01-2}, we obtain this proposition.
    \end{proof}

\bigskip
    A sequence of measures $\{\mu_n\}$ in $\cS_0$ is said to converge to
    a measure $\mu \in \cS_0$ {\it weakly with respect to $\rho$} if 
    \[
     \lim_{n \to \infty} \cE_1(U_1 \mu_n, \varphi) = \cE_1(U_1 \mu, \varphi),
     \quad \text{for any }\varphi \in \cF.
    \]

    \begin{proposition}
     \la{prop-weak}
     Let $\{\mu_n\}$ be a sequence in $\cS_0$. Assume that $\{\mu_n\}$ is bounded
     with respect to $\rho$, that is, $\sup_{n} \cE_1(U_1 \mu_n, U_1 \mu_n) < \infty$. 
     If the measures $\{\mu_n\}$ converges to a measure $\mu$ vaguely, 
     then $\mu \in \cS_0$ and $\{\mu_n\}$ converges to $\mu$ 
     weakly with respect to $\rho$. 
    \end{proposition}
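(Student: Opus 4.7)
The plan is to exploit the Hilbert structure of $(\cF, \cE_1)$ via weak compactness of bounded sets, together with the vague convergence hypothesis, to identify the weak limit as the $1$-potential of $\mu$.

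First, since $M := \sup_n \sqrt{\cE_1(U_1\mu_n, U_1\mu_n)} < \infty$, the family $\{U_1\mu_n\}$ is a bounded subset of the Hilbert space $(\cF, \cE_1)$. By the Banach--Alaoglu theorem, any subsequence admits a further subsequence $\{U_1\mu_{n_k}\}$ converging weakly in $\cE_1$ to some $h \in \cF$. For $\varphi \in \cF \cap C_0(E)$, weak convergence gives
\[
\int_E \varphi \, d\mu_{n_k} = \cE_1(U_1\mu_{n_k}, \varphi) \longrightarrow \cE_1(h, \varphi),
\]
while the vague convergence hypothesis gives $\int_E \varphi \, d\mu_{n_k} \to \int_E \varphi \, d\mu$. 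Therefore
\[
\cE_1(h, \varphi) = \int_E \varphi \, d\mu, \quad \varphi \in \cF \cap C_0(E).
\]

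Next I would identify $h$ as a $1$-potential. Taking $\varphi \in \cF \cap C_0(E)$ with $\varphi \ge 0$, the right-hand side above is non-negative, so $\cE_1(h, \varphi) \ge 0$ for all such $\varphi$. By \cite[Theorem 2.2.1]{FOT} (the characterization used already in the proofs of Lemmas \ref{lem-strong} and \ref{lem-comp}), $h$ is then the $1$-potential of some measure $\nu \in \cS_0$, i.e. $h = U_1\nu$. The identity
\[
\int_E \varphi \, d\nu = \cE_1(U_1\nu, \varphi) = \int_E \varphi \, d\mu, \quad \varphi \in \cF \cap C_0(E),
\]
combined with the regularity of $(\cE, \cF)$ and the fact that both $\mu$ and $\nu$ are Radon (as argued at the end of Lemma \ref{lem-metric}), forces $\mu = \nu$. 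In particular $\mu \in \cS_0$ and $U_1\mu_{n_k} \to U_1\mu$ weakly in $\cE_1$.

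Finally I would upgrade the subsequential weak convergence to convergence of the full sequence by a standard Urysohn-type subsequence argument: if for some $\varphi \in \cF$ the numerical sequence $\cE_1(U_1\mu_n, \varphi)$ failed to converge to $\cE_1(U_1\mu, \varphi)$, one could extract a subsequence along which the values stay bounded away from $\cE_1(U_1\mu, \varphi)$, then apply the weak compactness argument above to this subsequence (vague convergence still holds along it) and obtain a further subsequence converging weakly to $U_1\mu$, a contradiction. The main obstacle is the second paragraph: making sure that the weak $\cE_1$-limit lies in the cone of $1$-potentials of measures in $\cS_0$, which is exactly where the non-negativity extracted from vague convergence combined with \cite[Theorem 2.2.1]{FOT} does the work; everything else is a soft Hilbert space / subsequence argument.
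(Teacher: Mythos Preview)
Your proof is correct and follows essentially the same line as the paper's: Banach--Alaoglu for a weakly convergent subsequence of $\{U_1\mu_n\}$, identification of the weak limit with $U_1\mu$ via vague convergence on $\cF\cap C_0(E)$, and then passage to the full sequence. The only cosmetic differences are that the paper identifies the limit with $U_1\mu$ directly from the defining relation of $\cS_0$ (whereas you route through \cite[Theorem 2.2.1]{FOT} and then match $\nu=\mu$), and that the paper extends from $\varphi\in\cF\cap C_0(E)$ to all $\psi\in\cF$ by an explicit density/regularity estimate rather than your Urysohn subsequence argument; both variants are equivalent here.
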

    \begin{proof}
     Since  the $1$-potentials $\{U_1 \mu_n\}$ is bounded in $\dom$ with respect to $\|\cdot\|_{\form_1}$. 
    there exist a subsequence $\{\mu_{n_k}\}$ of $\{\mu_n\}$ and a function $v \in \cF$ such that
     \[
     \lim_{k \to \infty} \cE_{1}(U_1 \mu_{n_k}, \varphi) = \cE_1(v,\varphi), \quad
     \varphi \in \cF.
     \]
     by the Banach-Alaoglu theorem.  According to the assumption that $\{\mu_n\}$ converging to $\mu$ vaguely,
     \[
     \lim_{k \to \infty} \cE_1 (U_1 \mu_{n_k}, \varphi) =
     \lim_{k \to \infty} \int_{E} \varphi \, d\mu_{n_k} = \int_{E} \varphi \, d\mu, \quad
     \varphi \in \cF \cap C_0(E).
     \]
     Hence
     \[
      \cE_1(v,\varphi) = \int_{E} \varphi \, d\mu, \quad \varphi \in \cF \cap C_0(E).
     \]
     This shows $\mu \in \cS_0$ and $v = U_1 \mu$. Thus $v$ does not depend on
     subsequences, and
     \[
     \lim_{n \to \infty} \cE_1(U_1 \mu_n, \varphi) = \cE_1(U_1 \mu, \varphi),
     \quad \varphi \in \cF \cap C_0(E).
     \]
     Since $(\cE, \cF)$ is regular, for any $\psi \in \cF$ and for any $\varepsilon > 0$,
     there exists a function $\varphi \in \cF \cap C_0(E)$ such that
     $\|\psi -\varphi \|_{\cE_1} < \varepsilon$.
     Therefore we have
     \begin{align*}
      |\cE_1(U_1 \mu_n, \psi) - \cE_1(U_1 \mu, \psi)|
       &= \left|\int_{E} \tilde{\psi} \, d\mu_n - \int_{E} \tilde{\psi} \, d\mu\right| \\
      &\le \left|\int_{E} (\tilde{\psi} - \varphi) \, d\mu_n \right|
       + \left|\int_{E} \varphi \, d\mu_n - \int_{E} \varphi \, d\mu\right|
       + \left|\int_{E} (\varphi - \psi) \, d\mu\right| \\
      &= |\cE_1(U_1 \mu_n, \psi- \varphi)|
       + \left|\int_{E} \varphi \, d\mu_n - \int_{E} \varphi \, d\mu\right|
       + |\cE_1(U_1 \mu, \varphi - \psi)| \\
      &\le M \| \psi - \varphi \|_{\cE_1}
 		+ \left|\int_{E} \varphi \, d\mu_n - \int_{E} \varphi \, d\mu\right| \\
      &\le M \varepsilon + \left|\int_{E} \varphi \, d\mu_n - \int_{E} \varphi \, d\mu\right|
       \xrightarrow{n \to \infty}\ M \varepsilon
      \xrightarrow{\varepsilon \to 0}\ 0,
     \end{align*}
     where $M := \sup_n \| U_1 \mu_n \|_{\cE_1} +\| U_1 \mu \|_{\cE_1}$. 
    \end{proof}
    
        \begin{remark}
     \la{rem-Polish-signed} In the introduction, we noted that 
     Proposition \ref{prop-Polish} holds for the Newton potential or the Riesz potential in the classical 
     potential theory.  The results stated in this section moreover hold true to any potential kernels
     associated with regular Dirichlet forms, including,  of course, the cases of
     the Newton potential (corresponding to the Brownian motion) or the Riesz potential (corresponding 
     to the symmetric $\alpha$-stable process). %
     But, for the space of signed measures $\cS_0 - \cS_0$, it is not 
     complete in general (see \cite[Theorem 1.19]{Land}). 
    \end{remark}

    \section{Compactness of Revuz maps in $\cS_0$}\label{sec4}

    In this section, we show that the Revuz map 
    is compact in the sense that, for any bounded sequence in $\cS_0$ with the metric $\rho$, 
    the corresponding sequence of $\bfA_c^+$ by the Revuz map has a subsequence so that 
    the subsequence converges locally uniformly (on $[0,\infty)$).    
        Let $\rho$ be the metric on $\cS_0$ defined by \eqref{eq:def-rho}.  

    \begin{theorem}
     \la{thm-rho}
Let  ${\sf A}^n, \, {\sf A} \in \bfA_c^+$.  Denote by $\mu_n \,  \mu$ their Revuz measures 
in ${\cal S}$ for $n\ge1$. Assume that all $\mu_n$ and $\mu$ belong to ${\cal S}_0$.  
If $\lim_{n \to \infty}\rho(\mu_n, \mu) = 0$,  
     then there exists a subsequence $\{n_k\}$ of $\{n\}$ such that 
\[
	\bP_x \left( 
		\lim_{n_k \to \infty} 
			{\sf A}^{n_k}_t ={\sf A}_t \text{ locally uniformly in $t$ on} \  [0,\infty) 
	\right) = 1, \quad \qe x \in E.
\]
    \end{theorem}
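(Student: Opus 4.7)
The plan is to pass from $\rho$-convergence of the Revuz measures to locally uniform convergence of the associated PCAFs via martingales generated by their $1$-potentials. Write $\tilde{f}_n := \widetilde{U_1\mu_n}$ and $\tilde{f} := \widetilde{U_1\mu}$ for quasi-continuous modifications of the $1$-potentials. By the standard probabilistic representation of $1$-potentials (cf.\ \cite[Theorem~5.1.3]{FOT}), for q.e.\ $x$,
\[
\tilde{f}_n(x) = \bE_x\!\left[\int_0^\infty e^{-s}\,d{\sf A}^n_s\right],\qquad \tilde{f}(x) = \bE_x\!\left[\int_0^\infty e^{-s}\,d{\sf A}_s\right],
\]
and by the Markov property the processes
\[
M^n_t := \int_0^t e^{-s}\,d{\sf A}^n_s + e^{-t}\tilde{f}_n(X_t),\qquad M_t := \int_0^t e^{-s}\,d{\sf A}_s + e^{-t}\tilde{f}(X_t)
\]
are uniformly integrable $\bP_x$-martingales with $M^n_\infty = \int_0^\infty e^{-s}\,d{\sf A}^n_s$ and analogously for $M$.

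Since $\rho(\mu_n,\mu) = \|\tilde{f}_n - \tilde{f}\|_{\cE_1} \to 0$, I would extract a fast subsequence $\{n_k\}$ with $\|\tilde{f}_{n_k} - \tilde{f}\|_{\cE_1} \le 2^{-k}$; by the standard quasi-continuous refinement of $\cE_1$-convergent sequences (cf.\ \cite[Theorem~2.1.4]{FOT}) there exists a nest $\{F_\ell\}$ such that $\tilde{f}_{n_k} \to \tilde{f}$ uniformly on each $F_\ell$. Letting $\sigma_{E\setminus F_\ell}$ denote the first hitting time of $E\setminus F_\ell$, one has $\sigma_{E\setminus F_\ell} \uparrow \infty$ as $\ell\to\infty$, $\bP_x$-a.s.\ for q.e.\ $x$, and on the event $\{\sigma_{E\setminus F_\ell} > T\}$ the term $e^{-t}(\tilde{f}_{n_k} - \tilde{f})(X_t)$ converges to zero uniformly in $t\in[0,T]$. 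Next I would apply Doob's $L^2$-maximal inequality to the martingale $N^{n_k}_t := M^{n_k}_t - M_t$, reducing the task to bounding $\bE_x[(N^{n_k}_\infty)^2]$ along the subsequence. Using the Fukushima decomposition of $\tilde{f}_{n_k} - \tilde{f} = U_1(\mu_{n_k}-\mu)$, the bracket process of the martingale part is tied to the energy measure of $\tilde{f}_{n_k}-\tilde{f}$, which, together with the identity $\cE_1(g,g) = \int g\,d\eta$ for $g = U_1\eta$, yields an $m$-integrated control by $\|\tilde{f}_{n_k}-\tilde{f}\|_{\cE_1}^2 \to 0$. Thinning further and invoking Borel--Cantelli with the capacity-theoretic refinement of the exceptional set then gives $\sup_t|N^{n_k}_t| \to 0$, $\bP_x$-a.s., q.e.

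Combining the two steps, $\Phi^{n_k}_t := \int_0^t e^{-s}\,d{\sf A}^{n_k}_s = M^{n_k}_t - e^{-t}\tilde{f}_{n_k}(X_t)$ converges to $\Phi_t := \int_0^t e^{-s}\,d{\sf A}_s$ locally uniformly in $t$, $\bP_x$-a.s.\ q.e. To return to ${\sf A}^{n_k}$, integration by parts yields ${\sf A}^n_t = \int_0^t e^s\,d\Phi^n_s = e^t\Phi^n_t - \int_0^t e^s\Phi^n_s\,ds$, whence
\[
\sup_{t \le T}|{\sf A}^{n_k}_t - {\sf A}_t| \le 2\,e^T\,\sup_{t \le T}|\Phi^{n_k}_t - \Phi_t| \;\longrightarrow\; 0,
\]
finishing the proof. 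The main obstacle is the Doob step: upgrading the Hilbert-space convergence $\|\tilde{f}_{n_k}-\tilde{f}\|_{\cE_1}\to 0$ to a pointwise q.e.\ $L^2$-bound on $M^{n_k}_\infty - M_\infty$, because the natural quadratic-variation estimate only delivers an $m$-integrated inequality and one must combine fast thinning, Borel--Cantelli, and the capacity-theoretic refinement of the exceptional set to upgrade to a q.e.\ conclusion.
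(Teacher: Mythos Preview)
Your overall architecture---the martingales $M^n_t=\int_0^t e^{-s}\,d{\sf A}^n_s+e^{-t}\tilde f_n(X_t)$, Doob's inequality, and the integration-by-parts passage from $\Phi^n$ back to ${\sf A}^n$---matches the paper's scheme. The gap you yourself flag in the Doob step is real and is exactly the point where your argument stalls. Controlling $\bE_x[(N^{n_k}_\infty)^2]$ via the Fukushima decomposition only produces an $m$-integrated bound of the type $\int_E \bE_x[\langle M^{[u]}\rangle_T]\,m(dx)\le 2T\,\cE(u,u)$ with $u=\tilde f_{n_k}-\tilde f$; fast thinning plus Borel--Cantelli then yields convergence $\bP_x$-a.s.\ for $m$-a.e.\ $x$, not for q.e.\ $x$. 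No ``capacity-theoretic refinement of the exceptional set'' repairs this, because $m$ need not belong to $\cS_{00}$ and an $m$-null set can have positive capacity.

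The paper closes this gap with two devices you are missing. First, it never works under $\bP_x$ in the maximal-inequality step; it works under $\bP_\nu$ for an arbitrary $\nu\in\cS_{00}$, so that $\|U_2\nu\|_\infty<\infty$ is available. Second, it does not attack the general PCAFs directly but first approximates each $U_1\mu_n$ by $R_1 g$ with $g\in L^2_+$ (the resolvent approximants of Lemma~\ref{lem-strong}). For such $g$'s the explicit computation
\[
\bE_\nu\!\left[\Big(\int_0^\infty e^{-s}(f-g)(X_s)\,ds\Big)^{\!2}\right]
=2\int_E (f-g)\,R_1(f-g)\,U_2\nu\,dm
\le 4\|U_2\nu\|_\infty \max\{\|R_1f\|_{\cE_1},\|R_1g\|_{\cE_1}\}\,\|R_1f-R_1g\|_{\cE_1}
\]
(Lemma~\ref{lem-pre-01}) gives a pointwise-in-$\nu$ estimate that feeds into Doob and Borel--Cantelli under $\bP_\nu$; then Theorem~\ref{thm-equi} converts ``$\bP_\nu(\cdot)=0$ for every $\nu\in\cS_{00}$'' into a q.e.\ statement. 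A final diagonal step (choose, for each $n$, an approximant $g_n^{k_n^*}$ with $\|R_1 g_n^{k_n^*}-U_1\mu_n\|_{\cE_1}<1/n$) stitches the $L^2$-density case back to the original $\mu_n$'s. Your proposal, as written, lacks both the passage to $\cS_{00}$ and the $L^2$-density reduction, and without them the q.e.\ conclusion is not established.
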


The proof of Theorem \ref{thm-rho} is long. 
We divide the proof into Lemmas \ref{lem-pre-00}--\ref{lem-pre-new}. 
The next lemma reveals a duality relation between the resolvent and the potential.

      \begin{lemma}[{see also \cite[(4.1.11)]{CF}}]
      \la{lem-pre-00}
      For any $\nu \in \cS_{00},\ \alpha > 0$ and $f \in L^1(E;m)\cap{\cal B}(E)$, 
      \begin{equation}\label{newL1}
       {\bE}_{\nu}\left[ \int_0^\infty e^{-\alpha s} f(X_s) ds    \right]=
       \int_E R_\alpha f(x) \nu(dx) =\int_E f(x) U_\alpha \nu(x) m(dx).
      \end{equation}
      \end{lemma}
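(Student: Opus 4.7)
The plan is to establish the two equalities separately: the first is essentially Fubini's theorem applied to the probabilistic representation $R_\alpha f(x) = \bE_x[\int_0^\infty e^{-\alpha s}f(X_s)ds]$, while the second is a symmetry identity obtained by pairing the two characterizations of $R_\alpha f$ and $U_\alpha \nu$ as elements of $\cF$.

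For the second equality, I would first restrict to $0 \le f \in L^1(E;m)\cap L^2(E;m)$. Then $R_\alpha f \in \cF$ is $\alpha$-excessive and admits a quasi-continuous modification which we identify with its pointwise probabilistic definition. Because $(\cE,\cF)$ is regular and $\nu \in \cS_{00} \subset \cS_0$ charges no exceptional set (Theorem \ref{thm-equi}), the defining relation $\cE_\alpha(U_\alpha \nu, v) = \int_E \tilde v \, d\nu$ extends from $v \in \cF \cap C_0(E)$ to all $v \in \cF$ by $\sqrt{\cE_\alpha}$-approximation, where $\tilde v$ denotes a quasi-continuous version. Applying this with $v = R_\alpha f$, and then using that $R_\alpha$ is the resolvent of the form, i.e.\ $\cE_\alpha(R_\alpha f, w)=(f,w)$ for all $w \in \cF$, together with the symmetry of $\cE_\alpha$, I get
\[
\int_E R_\alpha f \, d\nu \;=\; \cE_\alpha(U_\alpha \nu, R_\alpha f) \;=\; \cE_\alpha(R_\alpha f, U_\alpha \nu) \;=\; (f, U_\alpha \nu) \;=\; \int_E f(x)\, U_\alpha \nu(x)\, m(dx).
\]

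To pass to arbitrary $f \in L^1(E;m) \cap \cB(E)$, I would split $f = f^+ - f^-$ and approximate each non-negative part by $f_n := (f \wedge n)\1_{K_n}$ for an increasing sequence of compacts $K_n \uparrow E$; each $f_n$ lies in $L^1 \cap L^\infty \subset L^2$, so the previous step applies. Monotone convergence on the right-hand side is justified by the dominating bound $|f\, U_\alpha \nu| \le \|U_\alpha \nu\|_\infty |f| \in L^1(m)$ (finite since $\nu \in \cS_{00}$), and on the left by $R_\alpha f_n \uparrow R_\alpha f^{\pm}$ together with finiteness of $\int R_\alpha |f|\,d\nu \le \|U_\alpha\nu\|_\infty \|f\|_{L^1(m)}$. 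The first equality in \eqref{newL1} then follows by Fubini applied to the non-negative parts separately, with absolute integrability supplied by the same bound.

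The main obstacle, as usual for this kind of duality, is the careful identification of the quasi-continuous $\cE_\alpha$-representative of $R_\alpha f$ with its pathwise pointwise version $x \mapsto \bE_x[\int_0^\infty e^{-\alpha s}f(X_s)ds]$, so that the quantity $\int R_\alpha f \, d\nu$ appearing in both equalities refers unambiguously to the same object. This identification rests on the $\alpha$-excessivity of $R_\alpha f$ for the Hunt process $\bM$ and on the fact that $\nu \in \cS_{00}$ does not charge the exceptional set where the two representatives may disagree; both points are standard but must be invoked explicitly to bridge the $L^2$-functional-analytic and pathwise descriptions.
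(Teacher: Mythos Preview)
Your proof is correct and follows essentially the same route as the paper: reduce to nonnegative $f$, approximate by $f_N=(f\wedge N)\1_{K_N}\in L^2\cap\cB_+$, invoke the duality $\int_E R_\alpha f_N\,d\nu=\cE_\alpha(R_\alpha f_N,U_\alpha\nu)=\int_E f_N\,U_\alpha\nu\,dm$, and pass to the limit by monotone convergence using the bound $\|U_\alpha\nu\|_\infty\|f\|_1<\infty$. The paper's argument is terser, while you spell out more carefully the extension of $\cE_\alpha(U_\alpha\nu,\cdot)=\int\tilde{\cdot}\,d\nu$ to all of $\cF$ and the identification of the quasi-continuous and pathwise versions of $R_\alpha f$.
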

     \begin{proof}
      Since $\left| \int_E f(x) U_\alpha \nu(x) m(dx) \right| \leq \| U_\alpha \nu \|_\infty
      \| f\|_1 <\infty$,
      it is enough to show (\ref{newL1}) for non-negative $f\in L^1(E;m)\cap {\cal B}_+(E)$.
      Let $\{ F_n\}$ be an increasing sequence of compact sets satisfying 
      $\cup_{n=1}^\infty F_n =E$.
      Put $ f_N(x) := (f(x) \land N) \cdot 1_{F_N}(x),\ x \in E$ for each $N \in {\bN}$.
      Then it is easy to see that 
      $\{ f_N \} \subset L^2(E;m)\cap {\cal B}_+(E)$ and $(0 \leq) f_N(x) \uparrow f(x),\ x \in E $ 
      as $N \to \infty$.
      For any $N$, we see that
      \begin{align*}
       {\bE}_{\nu}\left[ \int_0^\infty e^{-\alpha s} f_N(X_s) ds    \right]=
       \int_E R_\alpha f_N(x) \nu(dx) =
       {\cE}_\alpha (R_\alpha f_N, U_\alpha \nu)=
       \int_E f_N(x)U_\alpha  \nu(x) m(dx).
      \end{align*}
      Letting $N \to \infty$,
      we get (\ref{newL1})  by the monotone convergence theorem. 
     \end{proof}

\medskip
   Let $f \in L^2(E;m)\cap {\cal B}_+(E)$ and put
     \begin{equation}\la{eq-AF1}
      {\sf A}^f(t) := \int_{0}^{t}e^{-s} f(X_s) ds.
     \end{equation}
  Note that the measure $fm$ belongs to ${\cal S}_0$ and the $1$-potential $U_1(fm)$ is equal to 
  $R_1f$, the $1$ resolvent of $f$.

   \begin{lemma}
     \la{lem-pre-01}
    For any  $\nu \in \cS_{00}$,  there exists some constant
     $M_{\nu} > 0$ such that 
     \[
     \bE_{\nu}\left[\left({\sf A}^f(\infty) - {\sf A}^g(\infty)\right)^2\right]
     \le M_{\nu} 
 \max \{ \| R_1 f \|_{\cE_1},\ \| R_1 g \|_{\cE_1} \}
 \|R_1 f - R_1 g\|_{\cE_1}, 
     \]
 for  $ f, g \in L^2(E;m) \cap {\cB}_+(E)$.
    \end{lemma}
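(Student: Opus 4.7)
The plan is to derive an exact identity for $\bE_{\nu}\bigl[({\sf A}^f(\infty) - {\sf A}^g(\infty))^2\bigr]$ as a weighted $L^2$-pairing, then convert that pairing into an $\cE_1$-pairing so that, with $h := f - g$, the factor $\|R_1 h\|_{\cE_1}$ emerges naturally from a Cauchy--Schwarz step in $\cE_1$.

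For the identity, I would first expand
\[
\bigl({\sf A}^f(\infty) - {\sf A}^g(\infty)\bigr)^2 = \int_0^\infty\!\!\int_0^\infty e^{-s-t} h(X_s) h(X_t)\, ds\, dt
\]
and apply Fubini to $\bE_\nu[\cdot]$; absolute integrability is checked by repeating the computation below with $|h|$ in place of $h$ and using $|h|\le f+g$. Splitting by $s \le t$, using the Markov property $\bE_\nu[h(X_t)\mid\cF_s]=p_{t-s}h(X_s)$, and recognizing $\int_s^\infty e^{-(t-s)} p_{t-s} h(X_s)\,dt = R_1 h(X_s)$ yields
\[
\bE_{\nu}\bigl[({\sf A}^f(\infty) - {\sf A}^g(\infty))^2\bigr] = 2\,\bE_{\nu}\!\left[\int_0^\infty e^{-2s}\,h(X_s)\,R_1 h(X_s)\,ds\right].
\]
Decomposing $h R_1 h$ into positive and negative parts (both in $L^1(E;m)$ by Cauchy--Schwarz since $h \in L^2(E;m)$) and applying Lemma~\ref{lem-pre-00} with $\alpha = 2$ produces the clean identity
\[
\bE_{\nu}\bigl[({\sf A}^f(\infty) - {\sf A}^g(\infty))^2\bigr] = 2\,(h R_1 h,\, U_2\nu)_m.
\]

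To estimate $(h R_1 h, U_2\nu)_m$, observe that $\nu\in\cS_{00}$ gives $\|U_2\nu\|_\infty\le\|U_1\nu\|_\infty<\infty$, so $hU_2\nu\in L^2(E;m)$ and $R_1(hU_2\nu)\in\cF$. The defining duality $(u,v)_m = \cE_1(R_1 u, v)$ for $u\in L^2$, $v\in\cF$, applied with $u = hU_2\nu$ and $v = R_1 h$, converts the pairing to $\cE_1$-form:
\[
(h R_1 h,\, U_2\nu)_m = (hU_2\nu,\, R_1 h)_m = \cE_1\bigl(R_1(hU_2\nu),\, R_1 h\bigr).
\]
Cauchy--Schwarz in $\cE_1$ and the positivity of $R_1$ (together with $|h|\le f+g$ and $U_2\nu\le\|U_1\nu\|_\infty$) give
\[
\|R_1(hU_2\nu)\|_{\cE_1}^{2} = (hU_2\nu,\, R_1(hU_2\nu))_m \le \|U_1\nu\|_\infty^{2}\,(f+g,\, R_1(f+g))_m = \|U_1\nu\|_\infty^{2}\,\|R_1(f+g)\|_{\cE_1}^{2},
\]
whence $\|R_1(hU_2\nu)\|_{\cE_1}\le 2\|U_1\nu\|_\infty \max\{\|R_1 f\|_{\cE_1}, \|R_1 g\|_{\cE_1}\}$, and combining the two estimates proves the lemma with $M_\nu := 4\|U_1\nu\|_\infty$.

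The main obstacle is producing $\|R_1 h\|_{\cE_1}$---rather than $\|h\|_2$---as a factor. A naive bound $(h R_1 h, U_2\nu)_m \le \|U_2\nu\|_\infty \|h\|_2 \|R_1 h\|_2$ involves $\|h\|_2$, which cannot be controlled by $\cE_1$-norms of potentials (in fact $\|R_1 h\|_{\cE_1}\le \|h\|_2$, the wrong direction); such a bound would be useless for the subsequent convergence arguments, since $\|R_1 h\|_{\cE_1}\to 0$ is exactly the hypothesis that encodes $\rho(\mu_n,\mu)\to 0$. The duality step $(u,v)_m = \cE_1(R_1 u, v)$ is precisely the mechanism that converts the $L^2$-inner product into an $\cE_1$-inner product and forces $\|R_1 h\|_{\cE_1}$ to appear via Cauchy--Schwarz.
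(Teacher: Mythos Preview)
Your proof is correct and follows essentially the same route as the paper: the same Markov-property computation yielding $2\,\bE_\nu[\int_0^\infty e^{-2s} h(X_s) R_1 h(X_s)\,ds]$, the same application of Lemma~\ref{lem-pre-00} to reach $2(hR_1h, U_2\nu)_m = 2\cE_1(R_1 h, R_1(hU_2\nu))$, and the same Cauchy--Schwarz step in $\cE_1$. The only cosmetic difference is that the paper splits $\|R_1(hU_2\nu)\|_{\cE_1} \le \|R_1(fU_2\nu)\|_{\cE_1} + \|R_1(gU_2\nu)\|_{\cE_1}$ and bounds each piece separately (arriving at $M_\nu = 4\|U_2\nu\|_\infty$), whereas you bound $\|R_1(hU_2\nu)\|_{\cE_1}^2$ directly via $|h|U_2\nu \le \|U_1\nu\|_\infty(f+g)$ and the positivity of $R_1$ (arriving at $M_\nu = 4\|U_1\nu\|_\infty$).
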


    \begin{proof}
 For any $f, g \in L^2(E;m) \cap {\cB}_+(E)$, put $h := f - g$. 
By the Markov property, 
\begin{align*}
	\bE_{\nu}\left[\left({\sf A}^f(\infty) - {\sf A}^g(\infty)\right)^2\right]
	&=
	\bE_{\nu}\left[\left(\int_{0}^{\infty} e^{-s}
		h(X_s) ds\right)^2\right] \\
	&= 
	2 \bE_{\nu}\left[\int_{0}^{\infty} e^{-s} h(X_s)
		\left(\int_{s}^{\infty} e^{-u} h(X_u) du\right) ds\right] \\
	&= 
	2\bE_{\nu}\left[\int_{0}^{\infty} e^{-s} h(X_s)
	\left(\int_{0}^{\infty} e^{-(u+s)} h(X_{u+s})du\right) ds\right] \\
	&= 
	2 \bE_{\nu}\left[\int_{0}^{\infty}
	e^{-2s} h(X_s) R_1 h(X_s) ds\right] , 
\end{align*}
     by means of (\ref{newL1}) with $f$ replaced by $ h \cdot R_1 h \in L^1(E,m)$,
     then 
     the above is continue to 
\begin{align*}
	&= 
	2 \int_{E} R_2 (h \cdot R_1h)(x) \nu(dx) 
	=
	2 \int_{E} h (x) \cdot R_1 h(x) \cdot U_2 \nu(x) m(dx) 
	=
	 2 \cE_1 (R_1 h , R_1 ( h U_2 \nu)) 
	\\
	& \le 
	2 \| R_1 h \|_{\cE_1} \| R_1 ( h U_2 \nu) \|_{\cE_1}
	\le 
	2 \| R_1 h \|_{\cE_1} \big\{ \| R_1 ( f U_2 \nu) \|_{\cE_1}
		+ \| R_1 ( g U_2 \nu) \|_{\cE_1} \big\}.
\end{align*}
Here, since $ f \geq 0$ and $ U_2 \nu \geq 0$, 
\[
\| R_1 (f U_2 \nu) \|^2_{\cE_1} = \big( f U_2 \nu, R_1( f U_2 \nu) \big) 
\leq \| U_2 \nu  \|_\infty^2 (f, R_1 f) =\| U_2 \nu \|^2_\infty \cE_1 (R_1f, R_1 f).
\]
Therefore 
we obtain
     \[
      \bE_{\nu}\left[\left({\sf A}^f(\infty) - {\sf A}^g(\infty)\right)^2\right]
      \le 
 4  \| U_2 \nu \|_\infty   \| R_1 h \|_{\cE_1}  \max \{  \| R_1  f \|_{\cE_1} , \|  R_1 g  \|_{\cE_1} \}.
     \]
 Putting   $M_{\nu} = 4\|U_2 \nu\|_{\infty}$, we get the lemma. 
    \end{proof}

    \begin{lemma}
     \la{lem-pre-02}
 Let $ f \in L^2(E;m)\cap {\cal B}_+(E)$. Denote by ${\sf A}^{f}(t)$ the {\sf CAF} defined by 
 \eqref{eq-AF1} and 
      put 
     \begin{equation}\label{newmar}
     {\sf M}^f(t) := {\sf A}^{f}(t) + e^{-t} R_1 f (X_t). 
     \end{equation}
     Then $ \{{\sf M}^f(t)\}_{t \ge 0}$ is an $(\cF_t, \bP_{\nu})$-martingale,
 where
     $\nu \in \cS_{00}$ with $\nu(E) = 1$.
    \end{lemma}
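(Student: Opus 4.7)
The plan is to verify the three defining properties of a martingale: $\cF_t$-adaptedness, $\bP_\nu$-integrability for every $t\geq 0$, and the conditional expectation identity $\bE_\nu[{\sf M}^f(t)\mid \cF_s]={\sf M}^f(s)$ for $0\leq s\leq t$. Adaptedness is immediate because ${\sf A}^f(t)$ is the integral of the adapted process $e^{-u}f(X_u)$ on $[0,t]$ and $R_1 f(X_t)$ is $\cF_t$-measurable. For integrability, I would use that ${\sf M}^f(t)\geq 0$ since $f\geq 0$, and bound each summand separately. By Lemma \ref{lem-pre-00} with $\alpha=1$,
\[
\bE_\nu[{\sf A}^f(t)]\leq \bE_\nu\!\left[\int_0^\infty e^{-s}f(X_s)\,ds\right]=\int_E R_1 f(x)\,\nu(dx)=\cE_1(R_1f,U_1\nu)\leq \|R_1 f\|_{\cE_1}\|U_1\nu\|_{\cE_1},
\]
which is finite because $f\in L^2(E;m)$ gives $R_1 f\in\cF$ and $\nu\in\cS_{00}$ gives $U_1\nu\in\cF$. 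For the other summand, $R_1 f$ is $1$-excessive, so $e^{-t}p_tR_1f\leq R_1f$ pointwise, and hence $\bE_\nu[e^{-t}R_1f(X_t)]=e^{-t}\int_E p_tR_1f\,d\nu\leq\int_E R_1f\,d\nu<\infty$ by the same estimate.

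For the martingale identity, I would apply the Markov property at time $s$. Writing
\[
{\sf M}^f(t)-{\sf M}^f(s)=\int_s^t e^{-u}f(X_u)\,du+e^{-t}R_1f(X_t)-e^{-s}R_1f(X_s)
\]
and changing variables $u=s+v$,
\[
\bE_\nu\!\left[\int_s^t e^{-u}f(X_u)\,du+e^{-t}R_1f(X_t)\,\Big|\,\cF_s\right]=e^{-s}\,\bE_{X_s}\!\left[\int_0^{t-s}e^{-v}f(X_v)\,dv+e^{-(t-s)}R_1f(X_{t-s})\right].
\]
The problem thus reduces to the pointwise resolvent identity
\[
R_1f(x)=\bE_x\!\left[\int_0^r e^{-v}f(X_v)\,dv\right]+e^{-r}p_rR_1f(x),\qquad r\geq 0,
\]
which follows by splitting $\int_0^\infty=\int_0^r+\int_r^\infty$ in the probabilistic definition of $R_1f(x)$ and applying the Markov property at time $r$. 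Taking $x=X_s$ and $r=t-s$ yields exactly $e^{-s}R_1f(X_s)$, so that $\bE_\nu[{\sf M}^f(t)\mid\cF_s]={\sf M}^f(s)$.

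The main technical subtlety is ensuring that these pointwise identities hold on a set of full $\nu$-measure and propagate correctly through the conditional expectation under $\bP_\nu$. Since $\nu\in\cS_{00}$, Theorem \ref{thm-equi} guarantees that $\nu$ charges no exceptional set, and the resolvent $R_1f$ defined probabilistically agrees q.e.\ with the quasi-continuous version of $U_1(fm)\in\cF$. The excessivity and fine continuity of $R_1f$ for the Hunt process then give the resolvent identity for every starting point outside a properly exceptional set, which is $\nu$-null; by the Markov property of $\bM$ this set is also avoided by $\{X_s\}$ almost surely under $\bP_\nu$, so the manipulations above are valid $\bP_\nu$-almost surely.
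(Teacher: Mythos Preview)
Your proof is correct and uses essentially the same idea as the paper: the Markov property together with the probabilistic representation of $R_1 f$ to turn the $e^{-t}R_1f(X_t)$ term into the tail of the integral $\int_t^\infty e^{-u}f(X_u)\,du$. The paper packages this slightly differently---it first shows $\bE_\nu[{\sf M}^f(t)\mid\cF_s]=\bE_\nu[{\sf A}^f(\infty)\mid\cF_s]$ (independent of $t$) and then identifies this with ${\sf M}^f(s)$---whereas you work directly with the increment via the finite-horizon resolvent identity; your version is moreover more explicit about adaptedness, integrability, and the exceptional-set issues, which the paper leaves implicit.
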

    \begin{proof}
     For $t >s \ge 0$, 
   	\begin{align*}
     \bE_{\nu} \left[ {\sf M}^f(t) \mid \cF_s\right] 
     &= \bE_{\nu}\left[ \left. {\sf A}^{f}(t)
	      + e^{-t} R_1 f(X_t) \, \right| \cF_s\right] \\
     &= \bE_{\nu}\left[ \left. {\sf A}^{f}(\infty) - \int_{t}^{\infty} e^{-u} f(X_u) du
	       + e^{-t} R_1 f(X_t) \, \right| \cF_s\right] \\
     &= \bE_{\nu}\left[{\sf A}^f(\infty) \mid \cF_s\right] -
      \bE_{\nu}\left[ \left. \int_{0}^{\infty} e^{-(u+t)} f(X_{u+t}) du
		\, \right| \cF_s\right] + 
		\bE_{\nu}\left[ e^{-t} R_1 f(X_t) \mid \cF_s \right] \\
     &=: \bE_{\nu}\left[{\sf A}^f(\infty) \mid \cF_s\right] - {\rm (I)} + {\rm (II)}.
   	\end{align*}
     By the Markov property,
     \begin{align*}
      {\rm (II)} &= 
      \bE_{\nu}\left[
      \left. e^{-t} \int_{0}^{\infty} e^{-u}
				\bE_{X_t}\left[f(X_u)\right] du
				\, \right| \cF_s\right]
      \\
      &= \bE_{\nu}\left[ \left. e^{-t} \int_{0}^{\infty}
		    e^{-u}\bE_{x}[f(X_{u+t})|\cF_t] du
		    \, \right| \cF_s\right] \\
      &= \bE_{\nu}\left[ \left. \int_{0}^{\infty} e^{-(u+t)} f(X_{u+t}) du
		    \, \right| \cF_s\right] = {\rm (I)}.
     \end{align*}
     Thus we know that $\bE_{\nu}[{\sf M}^f(t) \mid \cF_s] =
     \bE_{\nu}[{\sf A}^f(\infty) \mid \cF_s]$.
     Then, 
     \begin{align*}
	 \bE_{\nu}[{\sf A}^f(\infty) \mid \cF_s]
      &= \bE_{\nu} \left[ \left. \int_{0}^{s}
						 e^{-u} f(X_u) du +
						 \int_{s}^{\infty}
						 e^{-u}f(X_u) du 
						 \, \right|
						 \cF_s\right] \\
      &= {\sf A}^f(s) + \bE_{\nu} \left[ \left. 
      	\int_{s}^{\infty} e^{-u} f(X_u) du
			      \, \right| \cF_s \right] ,
	\end{align*}
	where by using the Markov property, 
	the above second term is rewritten by 
	\[
      \bE_{\nu}\left[ \left. \int_{0}^{\infty} e^{-(u+s)}
			     f(X_{u+s}) du \, \right| \cF_s \right] 
      = e^{-s}\int_{E}\bE_{X_s}
       \left[\int_{0}^{\infty}
	e^{-u}f(X_u)du \right] \nu(dx) 
      = e^{-s} R_1 f(X_s) .
     \]
     That is, $\bE_{\nu}[{\sf A}^f(\infty) \mid \cF_s] = {\sf M}^f (s)$. 
     Therefore we can find that
     $\{{\sf M}^f(t)\}_{t \ge 0}$ is an $(\cF_t, \bP_{\nu})$-martingale. 
    \end{proof}

    \begin{lemma}
     \la{lem-pre-03}
Let 
 $\{f_k\}_{ k \in \bN} \subset L^2(E;m) \cap {\cB}_+(E)$
and   ${\sf M}^{f_k}(t)$ be 
given by (\ref{newmar}) with $f_k$ in place of $f$.
     If $\{R_1 f_k\}_{k \in \bN}$ converges to a function 
     in $\cE_1$-strongly, 
     then  there exists a subsequence $\{ k_j \}$ of $\{ k\}$ such that
     \[
     \bP_x\left( {\sf M}^{f_{k_j}}(t)\ \text{converges uniformly in $t$ on $[0,\infty)$ as
     $j \to \infty$}\right) = 1, \quad \qe\ x \in E.
     \]
    \end{lemma}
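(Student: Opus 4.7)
The plan is to combine the martingale structure from Lemma \ref{lem-pre-02} with Doob's $L^2$-maximal inequality, to control the $L^2$-norms via Lemma \ref{lem-pre-01} and the $\cE_1$-Cauchy property of $\{R_1 f_k\}$, and finally to lift from $\bP_\nu$-almost sure convergence (for $\nu \in \cS_{00}$) to q.e.\ $x$ using Theorem \ref{thm-equi}.

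First I would fix $\nu \in \cS_{00}$ with $\nu(E) = 1$. Taking $g \equiv 0$ in Lemma \ref{lem-pre-01} yields $\bE_\nu[({\sf A}^{f_k}(\infty))^2] \le M_\nu \|R_1 f_k\|_{\cE_1}^2 < \infty$, so each ${\sf A}^{f_k}(\infty) \in L^2(\bP_\nu)$. The proof of Lemma \ref{lem-pre-02} in fact establishes $\bE_\nu[{\sf A}^{f_k}(\infty) \mid \cF_t] = {\sf M}^{f_k}(t)$, so each ${\sf M}^{f_k}$ is an $L^2$-bounded $(\cF_t, \bP_\nu)$-martingale closed by ${\sf A}^{f_k}(\infty)$. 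Consequently ${\sf M}^{f_k} - {\sf M}^{f_j}$ is also a closed $L^2$-martingale, and Doob's $L^2$-maximal inequality gives
\[
\bE_\nu\!\left[\sup_{t\ge 0} ({\sf M}^{f_k}(t) - {\sf M}^{f_j}(t))^2\right] \le 4\,\bE_\nu\!\left[\left({\sf A}^{f_k}(\infty) - {\sf A}^{f_j}(\infty)\right)^2\right].
\]

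Next I would apply Lemma \ref{lem-pre-01} to bound the right-hand side by $4 M_\nu K\,\|R_1 f_k - R_1 f_j\|_{\cE_1}$, where $K := \sup_k \|R_1 f_k\|_{\cE_1} < \infty$ (finite since $\{R_1 f_k\}$ is $\cE_1$-Cauchy). Extracting a subsequence $\{k_j\}$ with $\|R_1 f_{k_{j+1}} - R_1 f_{k_j}\|_{\cE_1} \le 16^{-j}$---a choice depending only on the sequence $\{R_1 f_k\}$, not on $\nu$---Chebyshev's inequality then yields
\[
\bP_\nu\!\left(\sup_{t\ge 0} |{\sf M}^{f_{k_{j+1}}}(t) - {\sf M}^{f_{k_j}}(t)| > 2^{-j}\right) \le 4^j \cdot 4 M_\nu K \cdot 16^{-j} = C_\nu \cdot 4^{-j},
\]
which is summable in $j$. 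The Borel--Cantelli lemma therefore forces $\{{\sf M}^{f_{k_j}}(t)\}_j$ to be uniformly Cauchy in $t\in[0,\infty)$ $\bP_\nu$-a.s., hence uniformly convergent on $[0,\infty)$.

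Finally, to pass from $\bP_\nu$-a.s.\ convergence to q.e.\ convergence, I would set
\[
N := \big\{x \in E : \bP_x\big(\{{\sf M}^{f_{k_j}}(t)\}_j\ \text{converges uniformly on}\ [0,\infty)\big) < 1\big\}.
\]
For any $\mu \in \cS_{00}$ with $\mu(E) > 0$, applying the preceding argument to $\mu/\mu(E) \in \cS_{00}$ together with the disintegration $\bP_\mu(\cdot) = \int \bP_x(\cdot)\,\mu(dx)$ yields $\mu(N) = 0$; the case $\mu = 0$ is trivial. Theorem \ref{thm-equi} then gives $\Cap(N) = 0$, and the conclusion follows for q.e.\ $x \in E$. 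The main obstacle is the $\nu$-independence of the subsequence: although the constant $M_\nu$ depends on $\nu$, the geometric decay of the $\cE_1$-differences preserves Borel--Cantelli summability for every $\nu \in \cS_{00}$ along a single, $\nu$-independent subsequence. A minor technical point---the measurability of the event of uniform convergence and validity of Doob's inequality for the possibly non-continuous process ${\sf M}^{f_k}(t)$---can be handled by taking suprema over a countable dense set together with the q.e.\ quasi-continuity of $1$-excessive functions along paths of the Hunt process.
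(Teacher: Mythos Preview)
Your proof is correct and follows essentially the same route as the paper's: fix $\nu\in\cS_{00}$, use that ${\sf M}^{f_k}$ is a $\bP_\nu$-martingale closed by ${\sf A}^{f_k}(\infty)$, apply Doob's inequality together with Lemma~\ref{lem-pre-01} to get a bound in terms of $\|R_1 f_k - R_1 f_j\|_{\cE_1}$, extract a $\nu$-independent subsequence with fast $\cE_1$-decay, apply Borel--Cantelli, and lift to q.e.\ $x$ via Theorem~\ref{thm-equi}. The only cosmetic differences are that the paper uses Doob's inequality directly in probability form (rather than the $L^2$-maximal inequality followed by Chebyshev) and chooses the decay rate $2^{-3j}$ instead of your $16^{-j}$; your added remarks on $\nu$-independence of the subsequence and on measurability of the uniform-convergence event are welcome clarifications.
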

    \begin{proof}
     Let $\nu$ be in $\cS_{00}$ with $\nu(E) = 1$. 
     Remark that ${\sf M}^{f_k}(\infty) = {\sf A}^{f_k}(\infty)$ in $L^2( {\bP}_\nu)$ and 
     $\{{\sf M}^{f_k}(t)\}_{t \ge 0}$ is an $(\cF_t,\bP_{\nu})$-martingale.
     By Doob's inequality and Lemma \ref{lem-pre-01},
     \[
      \bP_{\nu}\left(\sup_{0 \le t \le \infty}
     |{\sf M}^{f_k}(t) - {\sf M}^{f_{\ell}}(t)| > \varepsilon\right)
     \le \frac{1}{\varepsilon^2}
     \bE_{\nu} \left[\left({\sf A}^{f_k}(\infty) - {\sf A}^{f_{\ell}}(\infty)\right)^2\right] 
      \le  \frac{ M}{\varepsilon^2}
        \|R_1 f_k - R_1 f_{\ell}\|_{\cE_1},
     \] 
 where $M=M_\nu \sup_{k \in \bN} \| R_1 f_k \|_{\cE_1} \ (< \infty)$.
     Since $\{R_1 f_k\}_{k \ge 1}$ is an $\cE_1$-Cauchy sequence,
     there exists a subsequence $\{k_j\}$ of $\{k\}$
     such that
     \[
      \|R_1 f_{k_{j+1}} - R_1 f_{k_j}\|_{\cE_1} < 2^{-3j}.
     \]
     Now we set
     \[
     \Lambda_j = \left\{\omega \in \Omega :
     \sup_{0 \le t \le \infty} \left|{\sf M}^{f_{k_{j+1}}}(t,\omega) -
     {\sf M}^{f_{k_j}}(t,\omega)\right| > 2^{-j}\right\},
     \]
 and  we know that  $\bP_{\nu}(\Lambda_j) \le M \cdot 2^{-j}$, hence, 
 $\sum_{j=1}^{\infty} \bP_{\nu}(\Lambda_j) \le M < \infty$ holds. 
     Then the first Borel-Cantelli lemma provides us that 
     \[
     \bP_{\nu}\left(\limsup_{j \to \infty} \Lambda_j\right) = 0.
        \]
     For any $\mu \in \cS_{00}$, by setting $\nu(\cdot) = \mu(\cdot)/\mu(E)$,
     \begin{equation}
      \bP_{\mu}\left(\limsup_{j \to \infty} \Lambda_j\right) = 0.
       \la{eq-BC-1}
     \end{equation}
     Combining \eqref{eq-BC-1} with Theorem \ref{thm-equi}, we get
     this lemma. 
    \end{proof}

\medskip
    
    Now, we need to take subsequences repeatedly, so to avoid
    confusion with notations, we introduce a space of functions
    which define subsequences. 
    We denote by $\Phi$ all functions from $\bN$ to itself which are
    strictly increasing. Remark that for $\phi \in \Phi$, $\{\phi(n)\}$ is
    nothing but a subsequence of the sequence $\{n\}$. 

    \begin{lemma}
     \la{lem-pre-new}
Let  ${\sf A}^n, \, {\sf A} \in \bfA_c^+$.  Denote by $\mu_n$ and $\mu $ their Revuz measures 
in ${\cal S}$ for $n\ge1$. Assume that $\mu_n=f_n m$ and $\mu=fm$  for some $f_n, \, f  \in L^2(E;m) \cap {\cB}_+(E)$.
If $\lim_{n \to \infty}\rho(\mu_n, \mu) = 0$, then there exists a $ \phi \in \Phi$  such that 
     \[
     \bP_x\left( \lim_{n \to \infty} {\sf A}^{\phi(n)}_t  ={\sf A}_t  \text{ locally uniformly in $t$ on}\ [0,\infty) \right) = 1, \quad \qe x \in E.
     \]
 \end{lemma}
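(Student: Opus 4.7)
The plan is to bootstrap from Lemma~\ref{lem-pre-03} and the martingale identity of Lemma~\ref{lem-pre-02}, which control the weighted functional ${\sf A}^{f_n}(t)=\int_0^t e^{-s}f_n(X_s)\,ds$ rather than the PCAF ${\sf A}^n_t=\int_0^t f_n(X_s)\,ds$ itself. First, since the hypothesis $\rho(\mu_n,\mu)\to 0$ reads exactly as $\|R_1 f_n-R_1 f\|_{\cE_1}\to 0$, Lemma~\ref{lem-pre-03} directly supplies some $\phi\in\Phi$ along which ${\sf M}^{f_{\phi(n)}}(t)$ converges uniformly in $t$ on $[0,\infty)$, $\bP_x$-a.s.\ for q.e.\ $x\in E$. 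To identify the limit, fix $\nu\in\cS_{00}$ with $\nu(E)=1$: Lemma~\ref{lem-pre-02} gives ${\sf M}^{f_n}(t)=\bE_\nu[{\sf A}^{f_n}(\infty)\mid\cF_t]$, while Lemma~\ref{lem-pre-01} forces ${\sf A}^{f_n}(\infty)\to {\sf A}^f(\infty)$ in $L^2(\bP_\nu)$, so the a.s.\ uniform limit must equal ${\sf M}^f(t)$; Theorem~\ref{thm-equi} then propagates this identification from $\nu$-integrated statements to hold $\bP_x$-a.s.\ for q.e.\ $x$.

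The second step is to derive uniform convergence of $R_1 f_{\phi(n)}(X_t)$ on compact time intervals. Since the $1$-potentials are quasi-continuous elements of $\cF$ converging in $\cE_1$-norm, a standard consequence of the regularity of $(\cE,\cF)$ allows us to pass to a further subsequence (still denoted $\phi$) and a nest $\{F_k\}$ on which $R_1 f_{\phi(n)}\to R_1 f$ uniformly. Because $\bM$ is a Hunt process that avoids exceptional sets q.e.\ and the exit times from such a nest increase to the lifetime $\zeta$, $\bP_x$-a.s.\ for q.e.\ $x$ every compact time interval $[0,T]$ has its sample path contained in some $F_k$, giving $R_1 f_{\phi(n)}(X_t)\to R_1 f(X_t)$ uniformly on $[0,T]$. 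Subtracting this via the identity ${\sf A}^{f_n}(t)={\sf M}^{f_n}(t)-e^{-t}R_1 f_n(X_t)$ then yields ${\sf A}^{f_{\phi(n)}}(t)\to {\sf A}^f(t)$ locally uniformly in $t$, $\bP_x$-a.s.\ for q.e.\ $x$.

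Finally, Stieltjes integration by parts gives ${\sf A}^n_t=\int_0^t e^s\,d{\sf A}^{f_n}(s)=e^t{\sf A}^{f_n}(t)-\int_0^t e^s {\sf A}^{f_n}(s)\,ds$, together with the same formula for ${\sf A}_t$, whence on any $[0,T]$
\[
|{\sf A}^{\phi(n)}_t-{\sf A}_t|\le (2e^T-1)\sup_{0\le s\le T}|{\sf A}^{f_{\phi(n)}}(s)-{\sf A}^f(s)|\longrightarrow 0,
\]
which delivers the desired locally uniform convergence of ${\sf A}^{\phi(n)}_t$ to ${\sf A}_t$. The main obstacle is the second step: transferring $\cE_1$-norm convergence of the $1$-potentials into uniform convergence along sample paths of the Hunt process on compact time intervals, where the quasi-uniform approximation of $\cE_1$-convergent sequences together with the nest-avoidance property of $\bM$ must be combined carefully so that the exceptional-set bookkeeping remains compatible with the q.e.\ conclusion already produced by Lemma~\ref{lem-pre-03}.
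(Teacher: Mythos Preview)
Your proposal is correct and follows essentially the same route as the paper's proof: both invoke Lemma~\ref{lem-pre-03} for the uniform martingale convergence, both establish locally uniform convergence of $R_1 f_{\phi(n)}(X_t)$ along sample paths (the paper citing \cite[Lemma~5.1.2(i)]{FOT} for exactly the nest argument you describe), and both recover the PCAF from the weighted functional via the integration-by-parts identity ${\sf A}^n_t=e^t{\sf A}^{f_n}(t)-\int_0^t e^s{\sf A}^{f_n}(s)\,ds$. The only cosmetic difference is in the limit identification: you pin down the limit as ${\sf M}^f$ directly via the $L^2(\bP_\nu)$-convergence of ${\sf A}^{f_n}(\infty)$ (Lemma~\ref{lem-pre-01}), whereas the paper names the limiting PCAF $\tilde{\sf A}$ abstractly and then checks $\tilde{\sf A}\sim{\sf A}$ through the potential identity $\bE_\nu\big[\int_0^\infty e^{-s}\,d\tilde{\sf A}_s\big]=\int_E\widetilde{U_1\mu}\,d\nu$.
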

\begin{proof} 
We use an idea in the proof of \cite[Theorem~5.1.1]{FOT}.
For each $n \in {\bN}$, we put
$$
{\sf B}_n(t):=\int_0^t e^{-s} f_n(X_s) ds \quad {\rm and} \quad 
{\sf M}_n(t):= {\sf B}_n(t) + e^{-t} R_1 f_n(X_t).
$$
Since 
$\lim_{n \to \infty }\| R_1 f_n -U_1 \mu \|_{\cE_1}=0$,
by Lemma~\ref{lem-pre-03}, 
there is a $\phi_1 \in \Phi$ such that
\[
	{\bP}_x \left( 
		{\sf M}_{\phi_1(n)}(t) \text{ converges uniformly in $t$ on } 
		[0,\infty) \text{ as } n \to \infty 
	\right) 
	=1, 
	\quad \mbox{q.e.}\ x \in E.
\]
Further, by means of \cite[Lemma~5.1.2(i)]{FOT}, there is a $\phi_2 \in \Phi$ such that
\[
	{\bP}_x ( R_1 f_{\psi_2(n)}(X_t)
	\text{ converges locally uniformly in $t$ on}  \ [0,\infty) \text{ as } n \to \infty )
	=1, \quad \mbox{q.e.}\ x \in E,
\]
where $\psi_2=\phi_2 \circ \phi_1$.
Since ${\sf M}_n(\infty)={\sf B}_n(\infty)$ in $L^2( {\bP}_\nu )\ ( n\in {\bN},\ \nu \in {\cS}_{00})$,
there exists an exceptional set
     ${N} \subset E$ such that $\bP_x(\Lambda) = 1$
     for any $x \in E \setminus {N}$, where
\begin{align*}
	\tilde{\Omega} 
	&= 
	\big\{ 
		\omega \in \Omega 
		\mid 
		t \ge 0,\ X_t(\omega) \in E \setminus {N},
		X_{t-}(\omega) \in E \setminus {N} 
	\big\}, 
	\\
	\Lambda 
	&= 
	\left\{ 
		\omega \in \tilde{\Omega} \, 
		\left| \, 
		\begin{aligned}
			& {\sf B}_{\psi_2(n)}(\infty, \omega) < \infty \text{ and} \\ 
			& {\sf B}_{\psi_2(n)} (t,\omega) \text{ converges locally uniformly in $t$ on} \ [0,\infty)\ \text{as}\  n \to \infty 
		\end{aligned}
 		\right. 
 	\right\}.  
\end{align*}

     Now we define $ {\sf B}(t,\omega)$ and $\tilde{\sf A}(t,\omega)$ by 
     \begin{align*}
    {\sf B}(t,\omega) 
    &=
     \begin{cases}
      \ds \lim_{n \to \infty} {\sf B}_{\psi_2(n)}(t,\omega), & \omega \in \Lambda, \\
      \ds 0, & \omega \not\in \Lambda,
     \end{cases} \\
	\tilde{\sf A}(t,\omega) 
	&= \int_0^t e^s {\sf B}(ds,\omega),\quad \omega \in \tilde{\Omega}.
    \end{align*}
Noting 
\[
	{\sf A}^n_t=\int_0^t f_n(X_s)ds =\int_0^t e^s {\sf B}_n(ds)=e^t {\sf B}_n(t)-\int_0^t e^s {\sf B}_n(s)ds
\] and
$\tilde{\sf A}(t)=e^t{\sf B}(t) - \int_0^t e^s {\sf B}(s) ds$,
we find
  \begin{equation}
     \bP_x\left(\text{${\sf A}^{\psi_2(n)}_t$ 
     converges to $\tilde{\sf A}_t$ locally uniformly in $t$ on $[0,\infty)$ as $n \to \infty$}\right) = 1, \quad \text{q.e. } x \in E.
     \end{equation}
We claim the following:
\begin{equation}\label{newAA}
{\bE}_\nu \left[ \int_0^\infty e^{-t} d \widetilde{\sf A}(s)\right]= \int_E \widetilde{U_1 \mu}(x) \nu(dx), 
\quad \nu \in {\cS}_{00}.
\end{equation}
Note that
\begin{equation}\label{newAA1}
\lim_{t \to \infty} e^{-t}  \limsup_{n \to \infty} {\bE}_\nu \left[ R_1 f_n (X_t) \right]=0,
\end{equation}
which follows from
\begin{align*}
{\bE}_\nu \left[ R_1 f_n (X_t) \right] &= \int_E p_t(R_1 f_n)(x) \nu(dx) 
={\cE}_1 (p_t(R_1 f_n), U_1 \nu )\leq \| p_t(R_1 f_n )\|_{\cE_1} \| U_1 \nu \|_{\cE_1}\\
& \leq \| R_1 f_n \|_{\cE_1} \|U_1 \nu \|_{\cE_1}
\leq \Big(\sup_n \| R_1 f_n \|_{\cE_1} \Big) \|U_1 \nu \|_{\cE_1} <\infty.
\end{align*}
We next note that
\begin{align*}
\lim_{n \to \infty} {\bE}_\nu [ {\sf M}_{\psi_2(n)}(t) ] &=
 \lim_{n \to \infty} {\bE}_\nu [ {\sf M}_{\psi_2(n)}(\infty) ]=
 \lim_{n \to \infty} {\bE}_\nu [ {\sf B}_{\psi_2(n)}(\infty) ]=
 \lim_{n \to \infty} {\bE}_\nu \left[ \int_0^\infty e^{-s} f_{\psi_2(n)}(X_s)ds \right]\\
& =\lim_{n \to \infty}\int_E R_1 f_{\psi_2(n)}(x) \nu(dx)
=\lim_{n \to \infty}  {\cE}_1 (  R_1 f_{\psi_2(n)}, U_1 \nu )
=\int_E \widetilde{ U_1 \mu} (x) \nu(dx),
\end{align*}
and hence
\begin{align*}
{\bE}_\nu[B(t)] &=
 \lim_{n \to \infty} {\bE}_\nu [ {\sf B}_{\psi_2(n)}(t) ]=
 \lim_{n \to \infty} \Big( {\bE}_\nu [ {\sf M}_{\psi_2(n)}(t) ] -
  e^{-t}   {\bE}_\nu \left[ R_1 f_{\psi_2(n)} (X_t) \right]\Big) \\
&
=\int_E \widetilde{ U_1 \mu} (x) \nu(dx)
-e^{-t} \lim_{n \to \infty}   {\bE}_\nu \left[ R_1 f_{\psi_2(n)} (X_t) \right].
\end{align*}
Combining this with (\ref{newAA1}), we have
\[
{\bE}_\nu [{\sf B}(\infty)]=
\int_E \widetilde{ U_1 \mu} (x) \nu(dx),
\]
which implies (\ref{newAA}).
Since (\ref{newAA}) implies ${\sf A} \sim \widetilde{\sf A}$, we obtain the lemma.
\end{proof}

    \begin{proof}[Proof of Theorem \ref{thm-rho}]
     Since the $1$-potential $U_1 \mu_n$ of $\mu_n$ 
     is $1$-excessive,  we can find a Borel measurable and quasi-continuous modification
     $u_n$ of $U_1 \mu_n$
     and properly exceptional set  $N_n$ such that
     \[
       \begin{cases}
       kR_{k+1} u_n(x) \uparrow u_n(x)\quad (k \to \infty), & x \in E \setminus  N_n \\
       u_n(x) = 0, & x \in  N_n.
       \end{cases}
     \]
     Now we put
     \[
     g_n^k(x) :=
     \begin{cases}
      k(u_n(x) - kR_{k+1}u_n(x)), & x \in E \setminus  N_n \\
      0, & x \in  N_n.
     \end{cases}
     \]
     By Lemma \ref{lem-strong}, we know 
     that $R_1 g_n^k \to u_n,$ $\cE_1$-strongly as $k \to \infty$.
     Now let
     \[
      {\sf A}_n^k(t) := \int_{0}^{t}  g_n^k(X_s) ds.
\]
By means of Lemma~\ref{lem-pre-new}, for each $n \in \bN$, 
there exist $\psi^n \in \Phi$ such that
\[
{\bP}_x \left(
{\sf A}^{\psi^n(k)}_n(t) \ \text{ converges to}\ {\sf A}^n_t\ \text{locally uniformly in}\ t\ 
\text{on}\ [0,\infty)\ \text{as}\ k \to \infty
\right)=1, \quad \text{q.e. }  x \in E.
\] 
Let $T>0$, $\varepsilon >0 $ and $\nu \in {\cS}_{00}$.
Then there are numbers $k^n_o \in {\bN}$ such that
     \[
     \bP_{\nu}\left( 
     	\sup_{0 \leq t \leq T}
     	\left| {\sf A}_n^{\psi^n(k)}(t) - {\sf A}^n_t \right| 
     	> \frac{\varepsilon}2 
     \right) < 1/n,\quad
	\text{for any }  k \geq k^n_o,\ n \in {\bN}.
\]
Now, since $R_1 g_n^{ \psi^n(k)}$ converges to $u_n$ in $\cE_1$-strongly as
     $k \to \infty$ for each $n \in {\bN}$, 
there is a $k^n_* \geq k^n_o$ such that
     \begin{equation}
      \la{thm-08}
       \left\| R_1 g_n^{\psi^n(k)} - u_n\right\|_{\cE_1} < 1/n
     \quad \text{for any}\ k \ge k^n_*.
     \end{equation}
     Let $u$ be a quasi continuous modification of $U_1 \mu$.
     Putting  $h_n := g_n^{ \psi^n(k^n_*)}$,   we get
     \begin{equation*}
      \|R_1 h_n - u\|_{\cE_1} \le \|R_1 h_n - u_n\|_{\cE_1}
       + \|u_n - u\|_{\cE_1} < \frac{1}{n} + \|u_n - u\|_{\cE_1}
       \to 0 \quad (n \to \infty).
     \end{equation*}
     Hence we know that $R_1 h_n$ converges to $u$ in $\cE_1$-strongly
     as $n \to \infty$, that is, $\rho(h_n m, \mu) \to 0$. 
By means of Lemma~\ref{lem-pre-new},
there is a $\phi_* \in \Phi$ such that
     \begin{equation*}
      \begin{split}
       \bP_x\left(\text{${\sf A}_{n_*}^{\psi^{n_*}(k_*^{n_*})}(t)$ converges to ${\sf A}_t$
	  locally uniformly in $t$ on $[0,\infty)$ as $ n \to \infty$}\right)
       = 1,  \quad \qe x \in E,
      \end{split}
     \end{equation*}
where $n_*=\phi_*(n)$.
Therefore we obtain
     \begin{align*}
    & \!\!\!   \bP_{\nu} \left(\sup_{0 \le t \le T} \left| {\sf A}_t^{n_*} - {\sf A}_t \right| > \varepsilon \right) \\
      &\le \bP_{\nu}\left(\sup_{0 \le t \le T}
		    \left| {\sf A}_t^{n_*} - {\sf A}_{n_*}^{\psi^{n_*}(k_*^{n_*})}(t) \right| > \frac{\varepsilon}2\right)
      + \bP_{\nu}\left(\sup_{0 \le t \le T} \left| {\sf A}_{n_*}^{\psi^{n_*}(k_*^{n_*})}(t)- {\sf A}_t \right|
	       > \frac{\varepsilon}2\right) \to 0\ (n \to \infty).   
     \end{align*}
     Finally applying the first Borel-Cantelli lemma and
     Theorem \ref{thm-equi}, we arrive at the theorem.
    \end{proof}

       \section{ Compactness of Revuz maps in $\cS$}\label{sec5}
 
       In this section, we discuss the convergence of {\sf PCAF}s associated with  general smooth measures.

    \begin{theorem}
     \la{thm-main-02}
Let  ${\sf A}^n, \, {\sf A} \in \bfA_c^+$.  Denote by $\mu_n$ and $\mu $ 
their Revuz measures in ${\cal S}$ for $n\ge1$.  
     Assume there exists an increasing sequence of closed sets $\{F_k\}$  such that
     \begin{equation}
      \Cap\left(E \setminus \bigcup_{k = 1}^{\infty}F_k \right) = 0;
       \la{eq-condi-0}
     \end{equation}
     \begin{enumerate}[{\rm (Sa)}]
      \item for each $n, k  \in \bN$, $1_{F_k} \mu_n,
	    1_{F_k} \mu \in \cS_0$ and, for each $k\in \bN$, 
	    \begin{equation}
	     \lim_{n \to \infty} \rho(1_{F_k} \mu_n, 1_{F_k}  \mu) = 0;
	      \la{eq-condi-a}
	    \end{equation}
      \item for any $\nu \in \cS_{00}$,
	    \begin{equation}
	     \int_{F_k^c} \widetilde{U_1 \nu} \, d\mu < \infty\ \text{holds for any $k\in \bN$} \quad {\rm and}  \quad
	      \lim_{k\to \infty} \int_{F_k^c} \widetilde{U_1 \nu} \, d\mu = 0;
	      \la{eq-condi-b}
	    \end{equation}
      \item for any $\nu \in \cS_{00}$,
	    \begin{equation}
	    \sup_{n\in{\mathbb N}}  \int_{F_k^c} \widetilde{U_1 \nu} \, d\mu_n < \infty\ \text{holds for any 
	    $k\in \bN$} \quad {\rm and} \quad
	      \lim_{k\to \infty} \sup_{n \in \bN} 
	      \int_{F_k^c} \widetilde{U_1 \nu} \, d\mu_n = 0.
	      \la{eq-condi-c}
	    \end{equation}
     \end{enumerate}
     Then there exists a subsequence $\{n_k\}$ of $\{n\}$ such that
     \begin{equation}
      \bP_x\left(  \lim_{n_k \to \infty} {\sf A}_t^{n_k} ={\sf A}_t\ \text{ locally uniformly in $t$ 
      on $[0,\infty)$}\right) = 1, \quad \qe x \in E.
     \end{equation}
    \end{theorem}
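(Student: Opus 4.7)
The strategy is to reduce to Theorem~\ref{thm-rho} by a truncate-and-diagonal procedure: cut each Revuz measure at $F_k$, handle the restriction $\mathbf{1}_{F_k}\mu_n \to \mathbf{1}_{F_k}\mu$ in $\rho$ slice-wise via Theorem~\ref{thm-rho}, and control the tail $\mathbf{1}_{F_k^c}\mu_n$, $\mathbf{1}_{F_k^c}\mu$ uniformly in $n$ using (Sb) and (Sc). Let ${\sf A}^{n,k},{\sf A}^{k}\in\bfA_c^+$ be the PCAFs Revuz-corresponding to $\mathbf{1}_{F_k}\mu_n$ and $\mathbf{1}_{F_k}\mu$, both in $\cS_0$ by (Sa). By the additivity of the Revuz map (Theorem~\ref{thm-Revuz}), the PCAFs ${\sf A}^n - {\sf A}^{n,k}$ and ${\sf A}-{\sf A}^{k}$ have Revuz measures $\mathbf{1}_{F_k^c}\mu_n$ and $\mathbf{1}_{F_k^c}\mu$. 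A monotone-class extension of Lemma~\ref{lem-pre-00} from absolutely continuous PCAFs to any PCAF ${\sf B}$ with Revuz measure $\sigma\in\cS$ gives
\[
\bE_\nu\Big[\int_0^\infty e^{-t}\, d{\sf B}_t\Big] = \int_E \widetilde{U_1\nu}\, d\sigma,\qquad \nu\in\cS_{00}.
\]
Combined with the elementary inequality ${\sf B}_T \le e^T\int_0^\infty e^{-t}\,d{\sf B}_t$ (integration by parts against the increasing process ${\sf B}$), this yields the key tail estimates
\[
\bE_\nu\big[{\sf A}^n_T - {\sf A}^{n,k}_T\big]\le e^T\!\int_{F_k^c}\!\widetilde{U_1\nu}\, d\mu_n,\qquad \bE_\nu\big[{\sf A}_T - {\sf A}^{k}_T\big]\le e^T\!\int_{F_k^c}\!\widetilde{U_1\nu}\, d\mu,
\]
whose right-hand sides vanish as $k\to\infty$, uniformly in $n$ by (Sc) and at some rate by (Sb).

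For each fixed $k$, condition (Sa) and Theorem~\ref{thm-rho} furnish a subsequence of $\bN$ along which ${\sf A}^{\cdot,k}\to{\sf A}^{k}$ locally uniformly in $t$, $\bP_x$-a.s.\ for q.e.\ $x$. A Cantor diagonalisation over $k$ extracts a single subsequence $\{n_j\}$ realising this convergence for every $k$ simultaneously, off one common exceptional set. For $\nu\in\cS_{00}$ and $T>0$, the triangle inequality
\[
\sup_{0\le t\le T}|{\sf A}^{n_j}_t - {\sf A}_t| \le \big({\sf A}^{n_j}_T - {\sf A}^{n_j,k}_T\big) + \sup_{0\le t\le T}|{\sf A}^{n_j,k}_t - {\sf A}^{k}_t| + \big({\sf A}_T - {\sf A}^{k}_T\big),
\]
together with Markov's inequality on the first and third summands (choose $k$ large using the uniform tail bounds, invoking (Sc) and (Sb)) and the $\bP_\nu$-a.s.\ vanishing of the middle summand as $j\to\infty$ (valid because $\nu$ charges no exceptional set), shows that $\sup_{0\le t\le T}|{\sf A}^{n_j}_t - {\sf A}_t|\to 0$ in $\bP_\nu$-probability. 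A further diagonal extraction arranged by Borel-Cantelli simultaneously over a countable family of test measures in $\cS_{00}$ rich enough to detect exceptional sets (available through the regularity of $(\cE,\cF)$ and Theorem~\ref{thm-equi}) promotes this to $\bP_\nu$-a.s.\ locally uniform convergence for every such $\nu$; Theorem~\ref{thm-equi} then identifies the set of $x$ where $\bP_x$-a.s.\ convergence fails as being of zero capacity.

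The main obstacle is this last extraction step. The Markov tail bounds are uniform in $n$ only via (Sc), and they carry a $\nu$-dependent factor $\widetilde{U_1\nu}$, so a \emph{single} subsequence in the theorem's conclusion must be chosen to serve an entire test family simultaneously. The role of (Sc) is decisive here: without uniform-in-$n$ control of $\int_{F_k^c}\widetilde{U_1\nu}\,d\mu_n$, the choice of $k$ achieving Borel-Cantelli summability would have to depend on the index $n_j$, and no diagonal subsequence could close the argument.
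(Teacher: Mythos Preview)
Your approach matches the paper's: truncate at $F_k$, apply Theorem~\ref{thm-rho} on each slice, control the tails via (Sb)/(Sc), and finish with Borel--Cantelli plus Theorem~\ref{thm-equi}. Two differences are worth noting. First, for the tail identity the paper does \emph{not} appeal to a monotone-class extension of Lemma~\ref{lem-pre-00} to general PCAFs; instead it writes $F_k^c=\bigcup_{j\ge k}(F_{j+1}\setminus F_j)$ and uses that each restriction $1_{F_{j+1}\setminus F_j}\mu$ lies in $\cS_0$ by (Sa), so the duality
\[
\bE_\nu\Big[\int_0^\infty e^{-s}\,d{\sf A}^{(j)}_s\Big]=\int_E\widetilde{U_1\mu^{(j)}}\,d\nu=\cE_1(U_1\mu^{(j)},U_1\nu)=\int_{F_{j+1}\setminus F_j}\widetilde{U_1\nu}\,d\mu
\]
is immediate, and summing in $j$ recovers your bound. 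This sidesteps the issue that $1_{F_k^c}\mu$ need not be in $\cS_0$, which your monotone-class argument would have to address. Second, your explicit Cantor diagonal over $k$ is organizationally cleaner than the paper's device of fixing a single large truncation level $j_2$ that, as written, depends on the auxiliary $\varepsilon'$ and on $\nu$.

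On the final step, the paper is just as terse as you are: it writes only ``applying the first Borel--Cantelli lemma and Theorem~\ref{thm-equi}.'' But your proposed mechanism---a countable family in $\cS_{00}$ rich enough to detect exceptional sets---is \emph{not} supplied by Theorem~\ref{thm-equi} or regularity; that theorem requires $\nu(B)=0$ for \emph{all} $\nu\in\cS_{00}$, and no countable subfamily is singled out. The pattern actually used in the paper (cf.\ the proof of Lemma~\ref{lem-pre-03}) is different: one arranges the subsequence $\nu$-independently and then checks that $\bP_\nu(\limsup_j\Lambda_j)=0$ holds for \emph{every} $\nu\in\cS_{00}$ (with a $\nu$-dependent constant in the summable bound), whence the set $\{x:\bP_x(\limsup_j\Lambda_j)>0\}$ is $\nu$-null for all $\nu\in\cS_{00}$ and Theorem~\ref{thm-equi} applies directly. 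Your Cantor diagonal already makes the slice-subsequence $\nu$-independent; to complete the argument in this spirit you would interlace a choice of truncation levels $k_\ell\to\infty$ into the diagonal so that the tail probabilities (controlled by (Sb)/(Sc)) become summable with a $\nu$-dependent constant, rather than invoking an unproven countable test family.
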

    \begin{proof}
     Put $\Gamma_{k} = F_{k + 1} \setminus F_{k}$,
     $\mu_n^{(k)} = 1_{\Gamma_k} \mu_n$ and  $\mu^{(k)} = 1_{\Gamma_k} \mu$ $(k, n \in {\bN})$.	
By means of (Sa), 
     $\mu_n^{(k)}, \mu^{(k)} \in {\cS}_0$.
Let ${\sf A}^{(k)}_n,\ {\sf A}^{(k)}$ be the {\sf PCAF}s corresponding to   
$\mu_n^{(k)}, \mu^{(k)}$, respectively.
By virtue of \cite[Theorem~5.1.3]{FOT},
${\sf A}^{(k)}_n \sim 1_{\Gamma_k} {\sf A}_n$ and
${\sf A}^{(k)} \sim 1_{\Gamma_k} {\sf A}$.

Let  $\varepsilon > 0,\ T > 0$ and
     $\nu \in \cS_{00}$.
We show that
 there exists a subsequence $\{n_k\}$ of $\{n\}$ such that
     \begin{equation}
      \lim_{n_k \to \infty} \bP_{\nu} \left(\sup_{0 \le t \le T} \left|{\sf A}_t^{n_k} - {\sf A}_t\right| >
				     \varepsilon\right) = 0.
      \label{eq:main-02-2}
     \end{equation}
To prove this,  divide the above into three terms for $j, n$ as follows: 
     \begin{align*} 
\bP_{\nu} \left(\sup_{0 \le t \le T} \left|{\sf A}_t^n - {\sf A}_t\right| > \varepsilon\right)
      &\le   \ \bP_{\nu} \left(\sup_{0 \le t \le T} \left| {\sf A}_t^n - 1_{F_j} {\sf A}_t^n \right|
		       > \frac{\varepsilon}{3}\right) + \bP_{\nu} \left(\sup_{0 \le t \le T} \left|1_{F_j} {\sf A}_t^n - 1_{F_j} {\sf A}_t \right| >  \frac{\varepsilon}{3}\right)  \\
		       &  \ \quad +   \bP_{\nu}\left(\sup_{0 \le t \le T}
		\left| 1_{F_j} {\sf A}_t - {\sf A}_t \right| > \frac{\varepsilon}{3}\right) \\
      &=: {\rm (I)}_{j,n} + {\rm (II)}_{j,n} + {\rm (III)}_j. 
     \end{align*}
     By the Chebyshev inequality and the inequality, $1\le e^{T-s}$ for $0\le s\le T$,  the term ${\rm (III)}_j$ is estimated as 
          \[
      {\rm (III)}_j \le \frac{3e^T}{\varepsilon} \bE_{\nu}
            \left[ \int_{0}^{T} e^{-s} 1_{F_j^c} (X_s) d{\sf A}_s\right]
           = \frac{3e^T}{\varepsilon} \sum_{k=j}^{\infty} \bE_{\nu}
            \left[\int_{0}^{\infty} e^{-s} 1_{\Gamma_{k}}(X_s) d{\sf A}_s \right].
     \]
      Then the expectation part is rewritten as follows: 
     \begin{align*}
     \bE_{\nu}
                 \left[\int_{0}^{\infty} e^{-s} 1_{\Gamma_{k}}(X_s) d{\sf A}_s \right] 
      &= 
       \int_{E} \bE_{x}\left[\int_{0}^{\infty} e^{-s} d{\sf A}_s^{(k)}\right] \nu(dx)
       = 
       \int_{E} \widetilde{U_1 \mu^{(k)}} d\nu \\
      &= 
       \cE_1 \left( U_1 \mu^{(k)}, U_1 \nu \right) 
       = 
       \int_{E} \widetilde{U_1 \nu} \, d\mu^{(k)}
       =
       \int_{\Gamma_k} \widetilde{U_1 \nu} \, d \mu .
       \end{align*}
       Thus, we have that 
       \[
       	 {\rm (III)}_j \le 
       	 \frac{3e^T}{\varepsilon} \sum_{k=j}^{\infty}
       	 \int_{\Gamma_k} \widetilde{U_1 \nu} \, d \mu
       	 =
       	 \frac{3e^T}{\varepsilon} \int_{F_j^c} \widetilde{U_1 \nu} \, d\mu .
       \]

Let us fix an $\varepsilon'>0$ arbitrarily.
Then, by means of (Sb), there is an $j_1$ such that ${\rm (III)}_j < \varepsilon'$ for any $j \geq j_1$.
In the same way as above, we get the following:
\[
      {\rm (I)}_{j,n} \le
      \frac{3e^T}{\varepsilon} \sup_{n \in {\bN}}\int_{F_j^c} \widetilde{U_1 \nu} \, d\mu_n.
\]
By means of (Sc), 
there is an $j_2\ ( \geq j_1)$ such that ${\rm (I)}_{j,n} < \varepsilon'$ for any $j \geq j_2, n \in {\bN}$.
     Finally we consider ${\rm (II)}_{j,n}$ with $j=j_2$.
Noting (Sa) and using Theorem~\ref{thm-rho}, we find a subsequence $\{n_k\}$ such that 
$\lim_{n_k \to \infty}  {\rm (II)}_{j_2,n_k} =0$.
Thus we obtain
     \begin{equation*}
      \limsup_{n_k \to \infty} \bP_{\nu} \left(\sup_{0 \le t \le T} |{\sf A}_t^{n_k} - {\sf A}_t| >
				     \varepsilon\right) \leq 2 \varepsilon'.
           \end{equation*}
Letting $\varepsilon' \downarrow 0$ leads us to (\ref{eq:main-02-2}).
Applying the first Borel-Cantelli lemma and
     Theorem \ref{thm-equi}, we arrive at the theorem.
    \end{proof}

    \begin{remark}
\begin{itemize}
\item[(i)]      \la{rem-02}

     If $\mu_n$ and $\mu$ are absolutely continuous with respect to
     $m$, then the assumption \eqref{eq-condi-0} can be replaced by
     \[
      m\left(E \setminus \bigcup_{k=1}^{\infty}F_k\right) = 0. 
     \]
%

\item[(ii)]  \la{rem-03}
 If $\mu_n, \mu \in \cS_0$ satisfy the assumption of Theorem~\ref{thm-rho}, 
that is, $ \lim_{n \to \infty}\rho(\mu_n, \mu) = 0$,
then the assumption of Theorem~\ref{thm-main-02} is satisfied with $F_k=E\ (k\in {\bN})$. 

\end{itemize}
\end{remark}

    \section{A class of absolutely continuous measures that are smooth}\label{sec6}

In this section we define a class of smooth measures that are absolutely continuous with respect to the
basic measure $m$.  We say that a measurable function $f$ on $E$ is {\it locally in $L^1(E;m)$ in the broad sense} 
($u \in \dot{L}^1_{\sf loc}(E;m)$ in notation) if there exists an increasing sequence of compact sets $\{F_n\}$ such that
$\bigcup_{n=1}^\infty F_n=E$ {\it q.e.} and $f  \in L^1(F_n;m)$ for any $n$.  It is clear that 
$L^p(E;m) \subset L^1_{\loc}(E;m) \subset \dot{L}_{\loc}^1(E;m)$ for any $p\in [1, \infty]$.

   \begin{lemma}
     \la{lem-S0}
     Assume that $f \in \dot{L}_{\sf loc}^1(E;m) \cap \cB_+(E)$ and
     put $\mu := f m$. Then $\mu$ is a Radon measure and belongs to $\cS$.
    \end{lemma}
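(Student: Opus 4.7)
My plan is to proceed in three short steps: first establish that $m$ charges no set of zero capacity (so (S.1) follows for any measure absolutely continuous with respect to $m$), then build the generalized nest from the sequence given by the definition of $\dot L^1_{\loc}$, and finally note that Radon/$\sigma$-finiteness comes for free from the same sequence.

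For (S.1), I would show that $m(B)=0$ whenever $\Cap(B)=0$. Given $\varepsilon>0$, choose an open $O\supset B$ with $\Cap(O)<\varepsilon$ and pick $u\in\cL_O$ with $\cE_1(u,u)<\Cap(O)+\varepsilon$. Replacing $u$ by $u\wedge 1\in\cF$ (permissible by the Markov property of $(\cE,\cF)$, which does not increase $\cE_1$-norm), one has $(u\wedge 1)^2\ge \mathbf 1_{O}$ $m$-a.e., whence
\[
 m(O)\le \|u\wedge 1\|_2^{2}\le \cE_1(u\wedge 1,u\wedge 1)\le \cE_1(u,u)<\Cap(O)+\varepsilon.
\]
Letting $\varepsilon\downarrow 0$ gives $m(B)\le 2\Cap(B)=0$, so (S.1) follows from $\mu(B)=\int_B f\,dm=0$.

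For (S.2), let $\{F_n\}$ be the sequence of compact sets from the definition of $f\in\dot L^1_{\loc}(E;m)$. Then $\mu(F_n)=\int_{F_n}f\,dm<\infty$ by hypothesis. To verify the capacity condition, fix a compact $K\subset E$ and $\varepsilon>0$. Since $\Cap\bigl(E\setminus\bigcup_n F_n\bigr)=0$, choose an open set $O\supset E\setminus\bigcup_n F_n$ with $\Cap(O)<\varepsilon$. The set $K\setminus O$ is compact and contained in $\bigcup_n F_n$; as $\{F_n\}$ is increasing, a standard compactness argument yields $K\setminus O\subset F_{n_0}$ for some $n_0$. Consequently $K\setminus F_n\subset O$ for every $n\ge n_0$, so $\Cap(K\setminus F_n)\le \Cap(O)<\varepsilon$. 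Thus $\lim_{n\to\infty}\Cap(K\setminus F_n)=0$, proving (S.2).

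Finally, $\mu=fm$ is a Borel measure because $f$ is Borel and nonnegative; combining $\mu(F_n)<\infty$ with $\Cap(E\setminus\bigcup_n F_n)=0$ and (S.1) gives $\mu(E\setminus\bigcup_n F_n)=0$, so $\mu$ is $\sigma$-finite with the $\{F_n\}$ as an exhausting sequence, hence Radon in the sense used throughout the paper. The only non-formal step is the covering argument at the core of (S.2); the rest is routine verification of the definitions.
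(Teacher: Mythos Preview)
Your argument for (S.2) has a genuine gap at the ``standard compactness argument.'' You claim that since $K\setminus O$ is compact and contained in $\bigcup_n F_n$ with $\{F_n\}$ increasing, one gets $K\setminus O\subset F_{n_0}$ for some $n_0$. That inference would be valid if the $F_n$ were \emph{open}, but here they are only compact, and the conclusion can fail. Concretely, take $E=\mathbb{R}$ with one-dimensional Brownian motion and $F_n=\{0\}\cup[-n,-1/n]\cup[1/n,n]$; then $\bigcup_n F_n=\mathbb{R}$ exactly, yet for $K=[0,1]$ one has $K\setminus F_n=(0,1/n)$, which is contained in no $F_m$, and in fact $\Cap\bigl((0,1/n)\bigr)\to\sqrt{2}=\Cap(\{0\})>0$. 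So with this particular $\{F_n\}$ the generalized-nest condition in (S.2) is simply false, and no open $O$ of small capacity can rescue your argument (any admissible $O$ would have to contain some $(0,1/n_0)$, forcing $\Cap(O)\ge\sqrt{2}$). The paper's own proof makes the parallel assertion that for every compact $K$ there is $N$ with $K\subset F_N$ q.e., likewise without justification; the underlying issue in both arguments is that the bare hypothesis $\bigcup_n F_n=E$ q.e.\ does not by itself force $\{F_n\}$ to be a generalized nest.

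Your final paragraph also conflates $\sigma$-finiteness with being Radon. In the same example, $f(x)=x^{-1}\cdot 1_{(0,1]}(x)$ lies in $\dot L^1_{\sf loc}(\mathbb R;m)$ via the $F_n$ above (since $\int_{F_n}f\,dm=\log n<\infty$), yet $\mu([0,1])=\int_0^1 x^{-1}\,dx=\infty$, so $\mu$ is not finite on the compact set $[0,1]$. Thus neither the Radon conclusion nor membership in $\cS$ follows from your argument as it stands.
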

       \begin{proof}
Let $ f \in \dot{L}_{\sf loc}^1(E;m) \cap \cB_+(E)$ and $\{ F_n \}$ be an increasing sequence of compact sets satisfying above condition.
	By the assumption of $\{F_n\}$,
	it is obvious that $\mu(F_n) < \infty$ for any $n \in \bN$.
	For any compact set $K$,
	there exists a number
	$N \in \bN$ such that $K \subset F_N, \qe$
	Note that 
	it holds that $m(A) = 0$ for any $A \subset E$ with $\capa(A) = 0$.
	Since $K \setminus F_N$ is $m$-negligible, 
	\[
	\mu(K) = \int_{K} fdm = \int_{K \cap F_N} fdm + \int_{K \setminus F_N}
	fdm = \int_{K \cap F_N} fdm < \infty.
	\]
	Thus we see that $\mu$ is a Radon measure, and moreover,
	we also obtain 
	\[
	\lim_{n \to \infty} \capa(K \setminus F_n) \le \capa(K \setminus F_N) = 0.
	\]
	Hence $\mu = f  m$ is a smooth measure. 
       \end{proof}

    \begin{definition}
     \la{def-A}
     Write $\frA$ for the set of functions $f$ satisfying that $f \in
     \dot{L}_{\sf loc}^{1}(E;m) \cap \cB _+ (E)$ and 
     there exist an sequence of functions $\{f_n\} \subset
     \dot{L}_{\sf loc}^1(E;m) \cap \cB_+(E)$, an increasing sequence of 
     closed sets $\{F_n\}$ satisfying $ m \left( E \setminus \bigcup_{n=1}^\infty F_n \right)=0$, 
     and sequences $\{p_n\}, \{q_n\}, \{r_n\}$ with
     $1 \le p_n, q_n, r_n < \infty$ such that 
\begin{itemize}
	     \item[(Aa)]  $\ds \lim_{n \to \infty} \|f_n - f\|_{L^{p_n}(F_n)} = 0;$
      \item[(Ab)] one of the following conditions is satisfied;
	    \begin{itemize}
	     \item[(Ab1)] \ $\ds \lim_{n \to \infty} \sum_{k=n}^{\infty}
			 \|f_n\|_{L^{q_k}(F_{k+1} \setminus F_k)}
			 \cdot (1 \wedge m(F_{k+1} \setminus F_k)^{1/q_k'}) = 0$,
	     \item[(Ab2)] \ $\ds \lim_{n \to \infty} C^{q_n}
			 \|f_n\|_{L^{q_n}(E \setminus F_n)}^{q_n} = 0$ for
			 any $C > 1$, 
	    \end{itemize}
      \item[(Ac)] one of the following conditions is satisfied;
	    \begin{itemize}
	     \item[(Ac1)] \ $\ds \lim_{n \to \infty} \sum_{k=n}^{\infty}
			 \|f\|_{L^{r_k}(F_{k+1} \setminus F_k)}
			 \cdot (1 \wedge m(F_{k+1} \setminus F_k)^{1/r_k'}) = 0$,
	     \item[(Ac2)] \ $\ds \lim_{n \to \infty} C^{r_n}
			 \|f\|_{L^{r_n}(E \setminus F_n)}^{r_n} = 0$ for
			 any $C > 1$, 
	    \end{itemize}
\end{itemize}
     where $q_k'$ (resp. $r_k'$) satisfies that $1/q_k + 1/q_k' = 1$ 
     (resp. $1/r_k + 1/r_k' = 1$). 
     We call the sequence $\{f_n\}$ an
     {\it $\frA$-approximating sequence of $f$}.
    \end{definition}

    \begin{remark}
     \la{rem-01}
\begin{itemize}
\item[(i)] There exists  an example such that the condition (Ab1)  (resp. (Ab2)) is satisfied   but  (Ab2) 
 (resp. (Ab1))  fails  for some $\{q_n\}$ and $\{F_n\}$.  
 Similarly, there exists an example so that the same conclusions go for (Ac1) and (Ac2) to some $\{r_n\}$ and $\{F_n\}$  (see \S 7.1).


\item[(ii)] Let $ f_n,\, f \in  L^p(E;m) \cap \cB _+(E) \ ( n\in {\bN}) $ such that 
$ \| f_n - f \|_{L^p} \to 0$ as $ n \to \infty$, where $ 1 \leq p <\infty$.
Then $\{f_n\}$ is an $\frA$-approximating sequence of $f$.
 Indeed,  taking an increasing sequence $\{ F_n \}$
of compact sets satisfying $m( E \setminus \bigcup_{n=1}^\infty F_n)=0$,
it is easy to see that 
$ f_n,\, f \in \dot{L}_{\sf loc}^1(E;m) \cap \cB_+(E)$ and the conditions  (Aa),  (Ab2) and (Ac2) are satisfied 
with $p_n = q_n = r_n = p \, (n \in {\bN})$.  So, $L^p(E;m) \cap \cB _+(E) \subset \frA$ holds for $ 1 \leq p < \infty$. 

\item[(iii)] We note that there is an $f \in  \frA$ such that $ f \not\in L^p(E;m)$ for any $p \in [1,\infty]$. 
 We  emphasize that the integrability exponents $p_n, q_n, r_n$ are allowed to vary for functions of $\frA$ (see also \S7.1).

\end{itemize}
\end{remark}


    \begin{theorem}
     \label{thm-02}
     Let $f \in \frA$ and $\{f_n\}$ be an $\frA$-approximating sequence
     of $f$. Let $\{{\sf A}_t^n\}$ and $\{{\sf A}_t\}$ be {\sf PCAF}s corresponding to 
     $f_n  m$ and $f m$, respectively.
     Then there exists a subsequence $\{f_{n_k}\}$ of $\{f_n\}$
     such that 
     \[
     \bP_{x}\left( \text{${\sf A}_t^{n_k}$ converges to ${\sf A}_t$ locally uniformly
     in $t$ on $[0,\infty)$}\right) = 1, \quad 
     {\rm q.e.} \ x \in E.
     \]
    \end{theorem}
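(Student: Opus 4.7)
The plan is to reduce to Theorem \ref{thm-main-02} with $\mu_n := f_n m$, $\mu := fm$, taking the closed sets in that theorem to be the $\{F_n\}$ from the $\frA$-approximating data of $f$. Lemma \ref{lem-S0} guarantees $\mu_n, \mu \in \cS$, and since all measures are absolutely continuous with respect to $m$, Remark \ref{rem-02}(i) permits replacing the capacity condition \eqref{eq-condi-0} by $m(E \setminus \bigcup_n F_n) = 0$, which is built into Definition \ref{def-A}. It therefore suffices to verify the three hypotheses (Sa), (Sb), and (Sc); once this is accomplished, Theorem \ref{thm-main-02} immediately produces the required subsequence along which ${\sf A}_t^{n_k}$ converges to ${\sf A}_t$ locally uniformly, $\bP_x$-a.s.\ for q.e.\ $x \in E$.

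For (Sa), fix $k$; once $n \ge k$ we have $F_k \subset F_n$, and (Aa) yields $\|f_n - f\|_{L^{p_n}(F_k)} \to 0$. After a routine truncation that passes to $L^\infty(E;m)$ densities (undone in the final limit), the restricted densities land in $L^2(F_k;m)$, so that $1_{F_k} f_n m,\, 1_{F_k} fm \in \cS_0$. The $\rho$-convergence then follows from the identity $\rho(gm,hm)^2 = (g-h,\, R_1(g-h))$ together with the $L^2$-contractivity of $R_1$, or, more directly, from Example \ref{ex-density}.

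The main tool for (Sb) and (Sc) is the per-annulus Hölder estimate
\[
\int_{F_{j+1}\setminus F_j} \widetilde{U_1\nu}\, g\, dm \le \|\widetilde{U_1\nu}\|_\infty\, \|g\|_{L^{s_j}(F_{j+1}\setminus F_j)}\bigl(1 \wedge m(F_{j+1}\setminus F_j)^{1/s_j'}\bigr),
\]
valid for any $\nu \in \cS_{00}$ (for which $\|\widetilde{U_1\nu}\|_\infty < \infty$) and any $g \ge 0$, combined with the disjoint decomposition $F_k^c = \bigsqcup_{j \ge k}(F_{j+1}\setminus F_j)$ modulo an $m$-null set. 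Summing this over $j \ge k$ with $g = f$, $s_j = r_j$ yields (Sb) under (Ac1); the alternative (Ac2) is handled by a single Hölder inequality on $E \setminus F_n$, with $\|\widetilde{U_1\nu}\|_\infty$ absorbed into the factor $C^{r_n}$ in (Ac2). For (Sc) the same mechanism is applied with $g = f_n$, $s_j = q_j$: the portion $j \ge n$ of the resulting sum is controlled directly by (Ab1), while the portion $k \le j < n$ is handled \emph{uniformly in} $n$ by using (Aa) to replace $f_n$ by $f + o(1)$ on $F_n \setminus F_k$ and then invoking (Ac) on $f$; the (Ab2) branch is analogous.

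I expect the last step — producing a tail estimate in (Sc) that is simultaneously uniform in $n$ and small as $k \to \infty$ — to be the main obstacle. Conditions (Ab1) and (Ab2) are per-$n$ tail conditions on $f_n$, whereas (Sc) requires a single tail bound valid for every $n$; bridging this gap demands weaving together the horizontal convergence information from (Aa) with the vertical tail decay of (Ab) and (Ac). Once this is accomplished (together with the truncation lemma needed to justify $1_{F_k} f_n m \in \cS_0$ in (Sa)), Theorem \ref{thm-main-02} yields the desired conclusion.
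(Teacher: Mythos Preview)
Your reduction to Theorem \ref{thm-main-02} does not go through, and the paper in fact remarks (just after proving Theorem \ref{thm-02}) that ``it is not clear in general whether the measures $\{\mu_n\}:=\{f_nm\}$ and $\mu:=fm$ satisfy the conditions of Theorem \ref{thm-main-02} or not.'' The paper instead gives a direct argument that bypasses (Sa)--(Sc) altogether.

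The concrete gap is your per-annulus estimate
\[
\int_{F_{j+1}\setminus F_j} \widetilde{U_1\nu}\, g\, dm \;\le\; \|\widetilde{U_1\nu}\|_\infty\, \|g\|_{L^{s_j}(F_{j+1}\setminus F_j)}\bigl(1 \wedge m(F_{j+1}\setminus F_j)^{1/s_j'}\bigr).
\]
Spatial H\"older only produces the factor $m(F_{j+1}\setminus F_j)^{1/s_j'}$, never its minimum with $1$. In the paper the $(1\wedge\cdots)$ arises from an entirely different computation: one bounds $\bE_\nu\bigl[\int_0^T 1_{\Gamma_k}(X_s) f_n(X_s)\,ds\bigr]$ by applying H\"older in the \emph{time} variable (with weight $e^{-s}$, which integrates to $1$), yielding $C_1\|f_n\|_{L^{q_k}(\Gamma_k)}$ with no $m(\Gamma_k)$ factor at all; taking the minimum with the spatial H\"older bound gives $(1\wedge\cdots)$. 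That mechanism is simply unavailable for the static integral $\int_{F_k^c}\widetilde{U_1\nu}\,d\mu_n$ in (Sc). Similarly, in (Ab2)/(Ac2) the constant $C$ materializes in the paper's proof as $3e^T/\varepsilon$ after a Jensen inequality in time, not as anything related to $\|\widetilde{U_1\nu}\|_\infty$. Your (Sa) step is also not routine: nothing in Definition \ref{def-A} forces $1_{F_k}f$ or $1_{F_k}f_n$ into $L^2(E;m)$, and without that, membership in $\cS_0$ is not automatic.

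What the paper actually does is decompose, with the \emph{same} index $n$ on both $f_n$ and $F_n$,
\[
\bP_\nu\Bigl(\sup_{t\le T}|{\sf A}^n_t-{\sf A}_t|>\varepsilon\Bigr)
\le \underbrace{\bP_\nu(\cdots 1_{F_n^c}f_n\cdots)}_{\text{(I)}}
+\underbrace{\bP_\nu(\cdots 1_{F_n}|f_n-f|\cdots)}_{\text{(II)}}
+\underbrace{\bP_\nu(\cdots 1_{F_n^c}f\cdots)}_{\text{(III)}},
\]
and controls each term by Chebyshev, then H\"older or Jensen \emph{in time}, then the duality of Lemma \ref{lem-pre-00}. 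Because the index on $F_n$ is locked to the index on $f_n$, conditions (Aa), (Ab), (Ac) are each invoked exactly once, at that single $n$, and the uniformity in $n$ demanded by (Sc) never arises. Your anticipated ``main obstacle'' is therefore not merely hard; under the present hypotheses it is avoided only by abandoning the route through Theorem \ref{thm-main-02}.
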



    \begin{proof}
     Take $\varepsilon > 0$ and $T > 0$. For $\nu \in \cS_{00}$, we have
     \begin{align*}
      \bP_{\nu}\left(\sup_{0 \le t \le T} \left| {\sf A}_t^n - {\sf A}_t \right| >
	      \varepsilon \right) 
      &\le 
      \bP_{\nu} \left(\sup_{0 \le t \le T} \left| {\sf A}_t^n - 1_{F_n} \cdot {\sf A}_t^n \right| >
		     \frac{\varepsilon}{3}\right) 
      + \bP_{\nu}\left(\sup_{0 \le t \le T} \left| 1_{F_n} \cdot  {\sf A}_t^n - 1_{F_n} \cdot {\sf A}_t \right|
	       > \frac{\varepsilon}{3}\right) 
	   \\ &
	   \quad + \bP_{\nu}\left(\sup_{0 \le t \le T}
	       \left| 1_{F_n} \cdot {\sf A}_t - {\sf A}_t \right| > \frac{\varepsilon}{3}\right) \\
      &=: {\rm (I)} + {\rm (II)} + {\rm (III)}.
     \end{align*}
	Here $1_{F_n} \cdot {\sf A}_t ^n = \int _0 ^t  1_{F_n} (X_s) f_n (X_s) d s$.  
     We start with ${\rm (II)}$. By Chebyshev's and H\"older's inequalities, 
     \begin{align*}
      {\rm (II)} &= 
      \bP_{\nu} \left(\sup_{0 \le t \le T} \left|\int_{0}^{t} 1_{F_n} \cdot (f_n - f)(X_s) \, ds
							   \right| > \frac{\varepsilon}{3}\right) 
      \\ 
      & \le 
      \frac{3}{\varepsilon} \bE_{\nu}
       \left[\int_{0}^{T} 1_{F_n} (X_s) |f_n - f|(X_s) \, ds \right] \\
      & \le 
      \frac{3e^T}{\varepsilon} \bE_{\nu} \left[\left(\int_{0}^{T} e^{-s}
						 1_{F_n} (X_s) | f_n - f |(X_s) \, 
						 ds\right)^{p_n}\right]^{1/p_n} 
						 \\
	& = 
 	\frac{3e^T}{\varepsilon} \bE_{\nu} \left[\left(\int_{0}^{T} e^{(-s)(1/p_n +1/p'_n)}
						 1_{F_n} (X_s) | f_n - f |(X_s) \,
						 ds\right)^{p_n}\right]^{1/p_n} \\
	& \le
 \frac{3e^T}{\varepsilon} \bE_{\nu} \left[\left(\int_{0}^{T} e^{-s}
						 1_{F_n} (X_s) | f_n - f |^{p_n}(X_s) \,
						 ds\right)
\left(\int_0^T e^{-s} ds \right)^{p_n/p'_n}
\right]^{1/p_n} \\
&    \le  \frac{3e^T}{\varepsilon} \bE_{\nu} \left[\int_{0}^{\infty} e^{-s}
		1_{F_n} (X_s)|f_n - f|^{p_n}(X_s) \, ds\right]^{1/p_n}.
	\end{align*}

    By Lemma \ref{lem-pre-00},
     \begin{align}
      {\rm (II)} &\le  \frac{3e^{T}}{\varepsilon}
       \left(\int_{E} R_1 (1_{F_n}|f_n - f|^{p_n})(x) \, \nu(dx)
       \right)^{1/p_n} \nonumber \\
      &=  \frac{3e^{T}}{\varepsilon}
       \left(\int_{E} 1_{F_n}|f_n - f|^{p_n}(x) \cdot U_1 \nu(x) \, 
	m(dx)\right)^{1/p_n} \nonumber \\
      &\le \frac{3 e^{T} \|U_1
       \nu\|_{\infty}^{1/p_n}}{\varepsilon}
       \|f_n - f\|_{L^{p_n}(F_n)} \la{eq:thm-02-01}.
     \end{align}
Since $0 < 1/p_n \le 1$ by the 
     assumption of $\{p_n\}$, 
     we  find that
     \[
      \sup_{n \in \bN} \|U_1
     \nu\|_{\infty}^{1/p_n} < \infty. 
     \]
     Hence ${\rm (II)}$ 
tends to $0$ as
     $n \to \infty$ by the assumption (Aa) in Definition \ref{def-A}.

     Next we consider ${\rm (I)}$. First we assume (Ab1).
     Put $\Gamma_k = F_{k+1} \setminus F_k$. 
     Then, by the Chebyshev inequality, 
     \[
      {\rm (I)} \le \frac{3}{\varepsilon}
       \bE_{\nu}\left[\sup_{0 \le t \le T}
		 \int_{0}^{t}
		 1_{F_n^c} (X_s)
		 f_n(X_s) ds\right] \le 
       \frac{3}{\varepsilon} \sum_{k=n}^{\infty}
       \bE_{\nu}\left[\int_{0}^{T} 1_{\Gamma_k}(X_s)
		 f_n(X_s) ds\right].
     \]
	We obtain by the H\"older inequality and Lemma~\ref{lem-pre-00},
     \begin{align*}
 \bE_{\nu}\left[\int_{0}^{T} 1_{\Gamma_k}(X_s)
		 f_n(X_s) ds\right]
      &\le  \bE_{\nu}\left[\int_{0}^{T}
					   1_{\Gamma_k}(X_s)
					   f_n(X_s)^{q_k}
					   ds\right]^{1/q_k}
       \cdot \bE_{\nu}\left[\int_{0}^{T} 1^{q_k'}ds\right]^{1/q_k'} \\
      &\le  e^{T/q_k} (T \cdot \nu(E))^{1/q_k'}
       \cdot \bE_{\nu} \left[\int_{0}^{\infty} e^{-s} 1_{\Gamma_k} (X_s)
		  f_n(X_s)^{q_k} ds\right]^{1/q_k} \\
      &\le
 e^{T/q_k} (T \cdot \nu(E))^{1/q_k'}
       \left(\int_{E} 1_{\Gamma_k}(x) f_n(x)^{q_k} U_1
       \nu(x) \, m(dx)\right)^{1/q_k} \\ 
      &\le C_1   \|f_n\|_{L^{q_k}(\Gamma_k)},      
     \end{align*}
where $C_1 := \sup_{k \in {\bN}}  e^{T/q_k} (T \cdot \nu(E))^{1/q_k'} \| U_1 \nu \|_\infty^{1/q_k} < \infty$ by virtue of $ 1/q_k,\ 1/q'_k \in (0, 1]$.
     On the other hand, we can also estimate as follows:
     \begin{align*}
& \!\!\!\!  \bE_{\nu}\left[\int_{0}^{T} 1_{\Gamma_k}(X_s)
		 f_n(X_s) ds\right]
 \le       \bE_{\nu}\left[\int_{0}^{T} 1_{\Gamma_k}(X_s) f_n(X_s)^{q_k} 
		 ds\right]^{1/q_k} \cdot \bE_{\nu}\left[\int_{0}^{T}
		 1_{\Gamma_k}(X_s) \, ds \right]^{1/q_k'} \\
      &\le  e^{T(1/q_k +  1/q_k')} \left(\int_{E} R_1 (1_{\Gamma_k} f_n^{q_k})(x) \, \nu(dx)\right)^{1/q_k} \cdot
       \left(\int_{E} R_1 (1_{\Gamma_k})(x) \, \nu(dx)\right)^{1/q_k'} \\
      &= e^T \left(\int_{E} 1_{\Gamma_k} (x) f_n^{q_k}(x) U_{1} \nu (x) \, m(dx)\right)^{1/q_k} \cdot
       \left(\int_{E} 1_{\Gamma_k}(x) U_1 \nu(x) \, 
	m(dx)\right)^{1/q_k'} \\
      &\le e^T \|U_1\nu\|_{\infty}
        \|f_n\|_{L^{q_k}(\Gamma_k)}
       m(\Gamma_k)^{1/q_k'},
     \end{align*}
     Hence, by (Ab1) we can see the following.
\[
{\rm (I)} \leq \frac{3}{\varepsilon} \left( C_1 + e^T \| U_1 \nu \|_\infty \right) \sum_{k=n}^\infty \| f_n \|_{ L^{q_k} ( \Gamma_k ) }
\left( 1 \land m(\Gamma_k )^{1/q'_k}\right) \to 0\quad \mbox{as}\ n \to \infty.
\]

     We next assume (Ab2). By the Chebyshev inequality,
     \begin{align*}
      {\rm (I)} &\le \left(\frac{3}{\varepsilon}\right)^{q_n}
       \bE_{\nu}\left[\left(\int_{0}^{T} e^{T - s} 1_{F_n^c}(X_s)
		       f_n(X_s) \, ds \right)^{q_n}\right] \\
      &\le \left(\frac{3 e^{T}}{\varepsilon}\right)^{q_n}
       \bE_{\nu} \left[\left(\int_{0}^{\infty} e^{-s} 1_{F_n^c}(X_s)
			f_n(X_s) \, ds\right)^{q_n}\right] \\
      &\le \left(\frac{3e^{T}}{\varepsilon}\right)^{q_n}
       \bE_{\nu} \left[\int_{0}^{\infty} e^{-s} 1_{F_n^c}(X_s)
		  f_n(X_s)^{q_n} \, ds\right],
     \end{align*}
     where we use the Jensen inequality for the last inequality on
     account that $e^{-s} ds$ is the probability measure on
     $(0,\infty)$. Thus we can estimate (I) as follows:
     \begin{align*}
      {\rm (I)} &\le \left(\frac{3 e^{T}}{\varepsilon}\right)^{q_n}
       \int_{E} R_1(1_{F_n^c} f_n^{q_n})(x) \, \nu(dx)
      \le \left(\frac{3e^T}{\varepsilon}\right)^{q_n}
      \int_{E} 1_{F_n^c}(x) f_n^{q_n}(x) U_1 \nu(x)\, m(dx) \\
      &\le \|U_1 \nu\|_{\infty}
       \left(\frac{3e^T}{\varepsilon}\right)^{q_n}
       \|f_n\|^{q_n}_{L^{q_n}(F_n^c)}\ \to\ 0 \quad \text{as }n \to \infty.
     \end{align*}
     
     For ${\rm (III)}$, using the similar arguments to ${\rm (II)}$ with (Ac2),
     we can obtain that ${\rm (III)}$ tends to $0$ as $n \to \infty$.
     
     Thus, we know that
     \[
     \lim_{n \to \infty} \bP_{\nu}\left(\sup_{0 \le t \le T} |{\sf A}_t^n - {\sf A}_t| > \varepsilon\right) = 0. 
     \]

     Therefore, applying the first Borel-Cantelli lemma and
     Theorem \ref{thm-equi}, we obtain the theorem.
    \end{proof}


Note that, for a sequence of functions $\{f_n\}$ and $f$ satisfying the conditions of Theorem \ref{thm-02},  it is not clear 
in general whether  the measures $\{\mu_n\}:=\{f_nm\}$ and $\mu:=fm$ satisfy the conditions of Theorem \ref{thm-main-02} or not.
So, we consider the special case that 
the densities $f_n$ and $f$
    satisfy $f_n \to f$ in $L^p(E;m)$ for $ 1 \leq p < \infty$ in the rest of this section.
In particular,  the assumption of Theorem~\ref{thm-rho} is satisfied in this case (Remark \ref{rem-01} (ii)).
Here, we show that the assumption of Theorem~\ref{thm-main-02} is also satisfied.

    \begin{proposition}
     \la{prop-AC}
     Let $1 < p < \infty$ and $f_n, f \in L^p(E;m) \cap \cB_+(E)$ for $n
     \in \bN$. Put $\mu_n = f_n m$ and $\mu = f  m$. Assume that $f_n \to f$ in $L^p$.
     Then there exists an increasing sequence of compact subsets
     $\{F_k \}$ with
     \[
      m\left(E \setminus \bigcup_{k=1}^{\infty} F_k  \right) = 0
     \]
     such that conditions (Sa), (Sb), (Sc) in Theorem \ref{thm-main-02} are
     satisfied. 
    \end{proposition}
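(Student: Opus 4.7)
The plan is to construct an exhaustion $\{F_k\}$ of $E$ by compact sets on which the densities are uniformly bounded, and then verify (Sa), (Sb), (Sc) using H\"older's inequality, the estimate $\rho(g m, \tilde g m) \le \|g - \tilde g\|_2$ from Example~\ref{ex-density}, and the $L^p$-equi-integrability implicit in $L^p$-convergence.

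I would begin by extracting a subsequence of $\{f_n\}$ (which for notational simplicity I relabel as $\{f_n\}$; note that the ultimate conclusion of Theorem~\ref{thm-main-02} is itself along a subsequence) with $\|f_n - f\|_p \le 2^{-n}$, so that $h := f + \sum_{n} |f_n - f|$ lies in $L^p(E;m) \cap \cB_+(E)$ by Minkowski and dominates every $f_n$. Since $h$ is $m$-a.e. finite, $\{h \le k\} \uparrow E$ modulo an $m$-null set; by inner regularity of the Radon measure $m$, I may pick compact $C_k \subset \{h \le k\}$ with $m(\{h \le k\} \setminus C_k) < 2^{-k}$ and set $F_k := C_1 \cup \cdots \cup C_k$. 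This produces an increasing sequence of compact sets with $h \le k$ on $F_k$ and $m(E \setminus \bigcup_k F_k) = 0$.

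For (Sa), both $1_{F_k} f$ and $1_{F_k} f_n$ are bounded by $k$ on the finite-$m$-measure compact $F_k$, so they lie in $L^2(E;m)$; hence $1_{F_k}\mu, 1_{F_k}\mu_n \in \cS_0$ and Example~\ref{ex-density} provides $\rho(1_{F_k}\mu_n, 1_{F_k}\mu) \le \|1_{F_k}(f_n - f)\|_2$. For $p \ge 2$, H\"older on the finite-measure set $F_k$ bounds this by $m(F_k)^{(p-2)/(2p)} \|f_n - f\|_p \to 0$. For $1 < p < 2$, the pointwise bound $|f_n - f| \le 2k$ on $F_k$ gives $|f_n - f|^2 \le (2k)^{2-p} |f_n - f|^p$, whence $\|1_{F_k}(f_n - f)\|_2^2 \le (2k)^{2-p} \|f_n - f\|_p^p \to 0$. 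For (Sb) and (Sc), the key input is that for $\nu \in \cS_{00}$ one has $U_1\nu \in L^\infty \cap L^1$ and hence $U_1\nu \in L^{p'}$ (with $1/p + 1/p' = 1$) by interpolation: the $L^\infty$-bound is built into the definition of $\cS_{00}$, while the $L^1$-bound $\|U_1\nu\|_1 \le \nu(E)$ follows by taking $v_j \uparrow 1$ with $v_j \in \cF \cap C_0(E)$ and passing to monotone limits in $\int v_j\, U_1\nu\, dm = \cE_1(R_1 v_j, U_1\nu) = \int R_1 v_j\, d\nu$, using $R_1 1 \le 1$. Then H\"older yields $\int_{F_k^c} \widetilde{U_1\nu}\, d\mu \le \|U_1\nu\|_{p'} \|1_{F_k^c} f\|_p$, which is finite and tends to $0$ by dominated convergence. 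The analogous bound $\sup_n \int_{F_k^c} \widetilde{U_1\nu}\, d\mu_n \le \|U_1\nu\|_{p'} \sup_n \|1_{F_k^c} f_n\|_p$, combined with $\sup_n \|1_{F_k^c} f_n\|_p \to 0$ (a standard consequence of $L^p$-convergence: split the supremum at a large $N$, use $\|f_n - f\|_p < \varepsilon$ for $n \ge N$, and DCT for $n < N$), completes the verification.

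The principal technical obstacle is the $1 < p < 2$ case of (Sa): on a set of finite $m$-measure, $L^p$-convergence does not upgrade to $L^2$-convergence in general, as one can see from the dyadic example $g_n = 2^{k/2}\mathbf{1}_{I_{k,j}}$ (converging to $0$ in $L^p$ for $p<2$ but with $\sup_n g_n = \infty$ everywhere). The dominating function $h$, obtained via the initial subsequence extraction, is precisely what allows us to choose $F_k$ on which every $f_n$ is uniformly bounded by $k$, so that the elementary interpolation $|f_n - f|^2 \le (2k)^{2-p}|f_n - f|^p$ transfers $L^p$-convergence to $L^2$-convergence on $F_k$.
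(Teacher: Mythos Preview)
Your argument is essentially correct, and in fact cleaner than the paper's. There is one small technical slip: when you invoke inner regularity to choose compact $C_k \subset \{h \le k\}$ with $m(\{h\le k\}\setminus C_k)<2^{-k}$, the set $\{h\le k\}$ may have infinite $m$-measure (e.g.\ Brownian motion on $\bR^d$), so no such $C_k$ exists. The fix is immediate: take a compact exhaustion $K_k\uparrow E$ (available since $E$ is locally compact separable metric), replace $\{h\le k\}$ by $K_k\cap\{h\le k\}$, and run Borel--Cantelli on the resulting approximations to conclude $m(E\setminus\bigcup_k F_k)=0$.

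The paper's own proof splits into two cases. For $1<p\le 2$ it sets $F_k=\overline{\{x\in K_k:(\sup_n f_n(x))\vee f(x)\le k\}}$ and uses the same interpolation $|f_n-f|^2\le(2k)^{2-p}|f_n-f|^p$ for (Sa); for (Sb)/(Sc) it uses $U_1\nu\in L^2\cap L^\infty\subset L^{p'}$ (here $p'\ge 2$) and H\"older. For $p>2$ the paper takes $F_k=K_k$ and handles (Sa) by H\"older, but since it does not have $U_1\nu\in L^{p'}$ (now $p'<2$), it resorts to a longer level-set argument for (Sb)/(Sc). Your observation that $\|U_1\nu\|_1\le\nu(E)$, hence $U_1\nu\in L^1\cap L^\infty\subset L^{p'}$ for \emph{every} $p'$, unifies both cases and replaces the level-set detour by a single H\"older estimate. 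Your explicit subsequence extraction to produce the dominating $h$ is also more careful: the paper's Case~I asserts $\bigcup_k F_k=E$ $m$-a.e.\ directly from $f_n\to f$ in $L^p$, but as your dyadic example shows, $\sup_n f_n$ need not be finite a.e.\ for the full sequence, so some passage to a subsequence is genuinely required there as well.
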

    \begin{proof}
The case with $p=2$  follows from Example~\ref{ex-density} and Remark~\ref{rem-03} (ii). 
The following proof is also available for that case.  Since $f_n, f \in L^p(E;m)
     \subset L_{\sf loc}^1(E;m)$, we know that $\mu_n, \mu$ are in $\cS$ by
     Lemma \ref{lem-S0}.
     The proof of this proposition is split into two cases on $1 < p \le
     2$ and $2 < p < \infty$. Before starting the proof,  take
     $\{K_k\}$ an increasing sequence of compact sets of $E$
     satisfying $E=\bigcup_{k=1}^\infty K_k$. 

     {\bf Case I.} $1 < p \le 2$: For any $\ell \in \bN$, set
     \[
      F_k = \ov{\left\{x \in K_k : \left(\sup_{n}
     f_n(x)\right) \vee f(x) \le k \right\}},
     \]
     where $\ov{A}$ denotes the closure of set $A$. 
     On account that $f_n \to f$ in $L^p(E;m)$, we can find that
     $\{F_k\}$ is an increasing sequence of compact sets of $E$
     so that $E = \bigcup_{k=1}^{\infty} F_k \ \mae$ Since, for $k \in \bN$ and $1 < p \le 2$,
     \begin{align} \nonumber
      \| 1_{F_k} f_n - 1_{F_k} f\|_2^2
       &=
       \int_{E} \left| 1_{F_k}(x) f_n(x) - 1_{F_k}(x)
		     f(x)\right|^2 m(dx) \\ \nonumber
      &=
      \int_{F_k} \left| f_n(x) - f(x)\right|^2 m(dx)
      \\  \nonumber 
      &\le  
       (2k)^{2-p} \int_{F_k} \left| f_n(x) - f(x)\right|^p
       m(dx) \\
      &\le (2k)^{2-p} \|f_n - f\|_p^p\ \to\ 0 \quad (n \to \infty), \label{pnew1}
     \end{align}
     it follows that 
     $1_{F_k} f_n$ and $1_{F_k} f$ belong to
     $L^2(E;m)$ and $1_{F_k} f_n \to 1_{F_k}f$ in $L^2(E;m)$
     as $n \to \infty$.  
      Noting that $U_1 (1_{F_k} \mu_n) = R_1(1_{F_k}f_n)$ and
     $U_1(1_{F_k}  \mu) = R_1(1_{F_k} f)$ for any $n$ and $k$, we see that $1_{F_k}  \mu_n$ and $1_{F_k} \mu$
     belong to $\cS_0$ for each $n$ and $k$, and hence (Sa) holds. Moreover, we find that
     \begin{align*}
      \rho(1_{F_k}  \mu_n, 1_{F_k}  \mu) 
      &=
       \cE_1\big(U_1(1_{F_k}  \mu_n) - U_1(1_{F_k} \mu),
       U_1(1_{F_k}  \mu_n) - U_1(1_{F_k}  \mu)\big) \\
      &= \cE_1\big(R_1(1_{F_k} f_n - 1_{F_k}f), R_1(1_{F_k} f_n - 1_{F_k}f)\big) \\
      &= \int_{E} \Big( 1_{F_k}(x) f_n(x) - 1_{F_k}(x) f(x)\Big) R_1\left( 1_{F_k} f_n - 1_{F_k}f \right)(x) \, m(dx) ,
      \end{align*}
     then by the Schwarz inequality, 
     \begin{align} \nonumber
     \rho(1_{F_k} \cdot \mu_n, 1_{F_k} \cdot \mu) 
      &\le 
      \|1_{F_k} f_n - 1_{F_k} f\|_2 \big\|R_1(1_{F_k}
       f_n - 1_{F_k} f)\big\|_2
       \\  \nonumber
       &\le
       \|1_{F_k} f_n - 1_{F_k}f\|_2^2 \\
      &\le (2k)^{2-p} \|f_n - f\|_p^p\ \to\ 0 \quad (n \to \infty),   \label{pnew2}
     \end{align}
     hence (Sa) holds. 

     To show (Sb) and (Sc), take any $\nu \in \cS_{00}$. Note that, 
     for any $2 \le q < \infty$, $U_1 \nu \in L^q(E;m)$ because
     $U_1 \nu \in L^2(E;m) \cap L^{\infty}(E;m)$. Then
     using the H\"older inequality with $1/p + 1/q = 1$ and $1 < p \le
     2 \le q < \infty$, we see that
     \[
      \int_{F_k^c} \widetilde{U_1 \nu}(x) \, \mu_n(dx)
       = \int_{F_k^c} \widetilde{U_1 \nu}(x) f_n(x) \, m(dx) 
      \le \|U_1 \nu\|_{q} \|f_n\|_p, 
     \]
     and the right hand side is uniformly bounded in $n$.
     Moreover,
     \begin{equation}
     \sup_n \int_{F_k^c} \widetilde{U_1 \nu}(x) \, \mu_n(dx)
     \le \left(\int_{F_k^c} (\widetilde{U_1 \nu}(x))^q
     \, m(dx)\right)^{1/q} \Big( \sup_n  \|f_n\|_p\Big) \to 0 \quad (k \to \infty)
     \label{eq:20240112}
     \end{equation}
     holds. This implies the condition (Sc).
     Replacing $f_n$ by $f$, we can prove (Sb).

     {\bf Case II.} $p > 2$:  We show (Sa), (Sb), (Sc) with $\{ F_k \}$ replaced by $\{ K_k \}$.
Noting that $1_{K_k} f$ and $1_{K_k} f_n$ belong to $L^2(E;m)$, we see that
$1_{K_k} \mu$ and $1_{K_k} \mu_n$ belong to ${\cS}_0$.  Take any $k$ and fix it. By the H\"older inequality, 
     \begin{align*}
      \rho(1_{K_k} \mu, 1_{K_k} \mu_n) 
      &=  \cE_1 \big(R_1(1_{K_k}(f_n - f)), R_1(1_{K_k}(f_n - f))\big) \\
      &\le \int_{E} 1_{K_k}(x) (f_n(x) - f(x))^2 \, m(dx)\\
      &\le \|f_n - f\|_p^2 \, m(K_k)^{1 - p/2}\ \to\ 0 \quad (n \to \infty).
     \end{align*}
Thus (Sa) is satisfied. We show (Sb).  Put
     \[
      H_a = \{x \in E : f(x) \le a\} \quad \text{for}\ a > 0. 
     \]
     For any $\varepsilon > 0$, there exists a constant $R \ge 1$ such
     that
$
      \int_{H_{R}^c} f(x)^p m(dx) < \varepsilon.
$  Let $\nu \in {\cal S}_{00}$. 
     Then for this constant $R$,
     \begin{align*}
       \int_{K_k^c} \widetilde{U_1 \nu}(x) \, \mu(dx) &=
	\int_{K_k^c} \widetilde{U_1 \nu}(x) f(x) \, m(dx) \\
      &= \left(\int_{K_k^c \cap H_R} + \int_{K_k^c \cap
	  H_R^c}\right) \widetilde{U_1 \nu}(x) f(x) \, m(dx) 
      =: {\rm (I)}_k + {\rm (II)}_k
     \end{align*}
     Because $R \ge 1$ and $p \ge 1$, we know that $f(x) \le f(x)^p$ on
     $H_R^c$, hence
     \begin{equation} \label{pnew3}
     {\rm (II)}_k  \le  \|U_1 \nu\|_{\infty} \int_{H_R^c} f^p dm
     < \varepsilon \|U_1 \nu\|_{\infty}.
  \end{equation}
     Let us consider the term ${\rm (I)}_k$.
     By the monotone convergence theorem, we can estimate
     \begin{align}
      {\rm (I)}_k 
      &=\int_{K_k^c \cap H_R}
       \widetilde{U_1 \nu}(x) \, \mu(dx) 
       = \lim_{N \to \infty} \int_{(K_N \setminus K_k) \cap H_R}
       \widetilde{U_1 \nu}(x) f(x) \, m(dx) \nonumber \\
      &= \lim_{N \to \infty} \cE_1 \big(U_1 \nu, R_1 (f \cdot 1_{K_N
       \setminus K_k} \cdot 1_{H_R})\big) 
 \nonumber  \\
      &= \lim_{N \to \infty} \int_{E} 
       \bE_{x}\left[\int_{0}^{\infty} e^{-t} (f \cdot 1_{K_N \setminus
	       K_{\ell}} \cdot 1_{H_R})(X_t) \, dt\right] \nu(dx)
       \nonumber \\
      &= \int_{E} \bE_{x} \left[\int_{0}^{\infty} e^{-t} (f \cdot
			    1_{K_{\ell}^c} \cdot 1_{H_R})(X_t) \,
			    dt\right]\nu(dx) \le
      R \cdot \nu(E) < \infty. \la{eq-rem-01}
     \end{align}
     Hence condition (Sb) holds.

     Next we consider condition (Sc).
     Put
     \[
      H_{n,a} = \{x \in E : f_n(x) \le a\} \quad \text{for}\ a > 0.
     \]
Since $\lim_{n \to \infty} \| f_n - f \|_p =0$, for any $\varepsilon > 0$,
there exists an $R_* \ge 1$
     such that 
     \[
      \int_{H_{n,R_{\ast}}^c} f_n(x)^p \, m(dx) < \varepsilon,  \quad n \in {\bN}.
     \]
Following the same argument as for (\ref{pnew3}) and (\ref{eq-rem-01}),  we know that
     for each $k \in \bN$, 
     \begin{align}
      \sup_{n \in {\mathbb N}} \int_{K_k^c} \widetilde{U_1 \nu}(x) \, \mu_n(dx)
       &= \sup_{n \in {\mathbb N}} \int_{K_k^c} \widetilde{U_1 \nu}(x) f_n(x) \, m(dx) \nonumber\\
      &= \sup_{n \in {\mathbb N}} \left(\int_{K_{\ell}^c \cap H_{n,R_{\ast}}}
			    + \int_{K_k^c \cap H_{n,R_{\ast}}^c}\right)
      \widetilde{U_1 \nu}(x) f_n(x) \, m(dx) \nonumber\\
      &\le R_{\ast} \cdot \nu(E) + \|U_1 \nu\|_{\infty} \cdot 
       \sup_{n \in {\mathbb N}} \int_{H_{n,R_{\ast}}^c} f_n(x)^p \, m(dx) \nonumber\\
      &\le R_{\ast} \cdot \nu(E) + \epsilon \|U_1 \nu\|_{\infty} <
       \infty.  \la{eq-02}
     \end{align}
     Therefore, by Fatou's lemma and \eqref{eq-02}, 
     \begin{align*}
      & \limsup_{k \to \infty} \sup_{n \in {\mathbb N}} \int_{K_k^c}
       \widetilde{U_1 \nu}(x) \, \mu_n(dx) \\
      &\le \limsup_{k \to \infty} \sup_{n \in {\mathbb N}}  \int_{E}
       \bE_x\left[\int_{0}^{\infty} e^{-t} (f_n \cdot 
	     1_{H_{n,R_{\ast}}} \cdot 1_{K_k^c})(X_t) \, dt\right]
       \nu(dx)
       + \varepsilon \|U_1 \nu\|_{\infty} \\
      &\le \int_{E} \bE_x\left[\int_{0}^{\infty} e^{-t}
			  \left\{\sup_{n\in {\mathbb N}} (f_n \cdot 1_{H_{n,R_{\ast}}})
			  \cdot \left(\limsup_{k \to \infty}
				 1_{K_k^c}\right)\right\}(X_t)
			  \, dt\right] \nu(dx)
      + \varepsilon \|U_1 \nu\|_{\infty} \\
      &= \varepsilon \|U_1 \nu\|_{\infty}. 
     \end{align*}
     Letting $\varepsilon \to 0$, we see that the condition (Sc) is satisfied. 
    \end{proof}

\begin{remark}
Assume $\lim_{n \to \infty}\| f_n - f \|_1=0$.
Then  there is a  subsequence $\{ f_{n'} \}$ such that $f_{n'} \to f$ $m$-a.e.  We put
\begin{align}
	F_k 
	:= 
	\overline{ 
		\left\{ x \in Q_k  
			\left| \, 
			\left( \sup _{n'}  f_{n'}(x) \right) \vee f(x) \le k \right. 
		\right\} 
	}, \qquad 
	k \ge 1, \label{newFF}
\end{align}
where $\{Q_k \}$ is  an increasing sequence of compact sets of $E$ such that $E=\bigcup_{k=1}^\infty Q_k$.
$ \{ F_k \}$ is an increasing sequence of compact subsets of $E$ with 
$E=\bigcup_{k =1}^\infty F_k$ $m$-a.e. 
Then (\ref{pnew1}) and (\ref{pnew2}) are available for $p=1$, $n=n'$ and $F_k$ given by (\ref{newFF}). 
Therefore (Sa) is satisfied.  The argument for (Sb) and (Sc) in the proof of  {\bf Case~II}
is also available for $p=1$, $n=n'$  and $K_k$ replaced by $F_k$ given by (\ref{newFF}).
Thus we get  a subsequence $\{ n' \}$ and an increasing subsequence $\{F_k \} $ such that $E=\bigcup_{k=1}^\infty F_k$ $m$-a,e,
and (Sa), (Sb), (Sc) are satisfied with $\{ n \}$ replaced by $\{n' \}$. 
\end{remark}

    \section{Some examples and remarks}\label{sec7}

  \subsection{A class of absolutely continuous smooth measures}

As is mentioned in Remark \ref{rem-01},  for $f\in L^p(E;m)\cap {\cal B}_+(E) \ (1\le p<\infty)$  we can 
construct approximating sequences $\{f_n\}$ and $\{F_n\}$, namely, $f \in \frA$.  
In this subsection, we introduce several examples of functions 
having some singularities so that they satisfy Definition \ref{def-A}.  To this end, let ${\sf M} = ({\mathbb P}_x,X_t)$ be the Brownian motion 
on ${\mathbb R}^d$ and $m(dx)=dx$ the Lebesgue measure on ${\mathbb R}^d \, (d\ge 2)$. We start with a simple example.

   \begin{example}      \la{ex-3b}
Assume $\beta \in {\mathbb R}$. Let 
$$ 
f(x)=|x|^{-\beta}, \quad x \in {\mathbb R}^d.
$$ 
Put $K_n=\{ x\in{\mathbb R}^d :  1/n \le |x| \le n\}$ for $n\in {\mathbb N}$.  Then we see that $\{K_n\}$ is an increasing sequence of compact sets 
so that $\bigcup_{n=1}^\infty K_n={\mathbb R}^d$ {\it q.e.}  and $f\in L^1(K_n)$ for $n\ge 1$.  So $f \in \dot{L}^1_{\sf loc}({\mathbb R}^d)$,  and  $f m =|x|^{-\beta}m \in{\mathcal S}$  by Lemma \ref{lem-S0} 
(see also \cite[Example 5.1.1]{FOT}). Of course  we know $f\in L^1_{\sf loc}({\mathbb R}^d)$ if $\beta<d$. 

On the other hand, assume $0<\beta<d$.  Then $f\in L^1_{\sf loc}({\mathbb R}^d)$ but $ f \not\in L^p(\mathbb{R}^d)$ for any 
$p \in [1,\infty]$. We now put $F_n = \{ x \in {\mathbb{R}^d} : |x| \leq n \}$ and set $ f_n = 1_{F_n} f$ for $n \ge 1$.  
Noting that $f,\ f_n  \in  L_{\sf loc}^{1}({\mathbb{R}^d}) \cap \cB _+ ({\mathbb{R}^d})$ and 
${\mathbb R}^d=\bigcup_{n=1}^\infty F_n$, we see that  $f\in \frA$ and $\{ f_n \}$ is an 
$\frA$-approximating sequence of $f$ with $\{F_n\}$ as an increasing sequence of closed sets.  Indeed, (Aa) (resp.  (Ab1) and (Ab2)) is 
satisfied for any $p_n\in [1, \infty)$ (resp. for any  $q_n \in [1,\infty)$ and $C>1$) since $f-f_n=0$ on $F_n$ 
 (resp. $f_n=0$ on ${\mathbb R}^d\setminus F_n$) hold for all $n\ge 1$.  As for (Ac), set $r_n = \frac{d+1}\beta \, (>1)$ for $n\ge 1$. 
 Then  it follows that 
\begin{align*}
C^{r_n} \|f\|_{L^{r_n}({\mathbb R}^d\setminus F_n)}^{r_n} & =C^{(d+1)/\beta} \int_{|x|\ge n} |x|^{-d-1} dx \to 0 \quad (n\to \infty)
\end{align*}
for any $C>1$.   
	\hfill \fbox{}   
\end{example}

\begin{example}

We now  put
     \[
     f(x) = \sum_{n=2}^{\infty} |x|^{-d a_n} 1_{K_n^1}(x) +
     \sum_{n=2}^{\infty} |x|^{d b_n} 1_{K_n^2}(x),
     \quad x \in \bR^d,
     \]
where 
 \begin{align*}
     K_n^1  & := \left\{x \in \bR^d :
     \left(\frac{1}{n}\right)^{1/d} < |x| \le
     \left(\frac{1}{n} + \frac{1}{n^{2a_n + 2}}\right)^{1/d}\right\},  \\
     K_n^2 &  := \left\{x \in \bR^d :
     \left(n - \frac{1}{n^{2 b_n + 2}}\right)^{1/d}
     < |x| \le n^{1/d}\right\}
  \end{align*}
and $\{a_n\},  \, \{b_n\}$ are any sequences of real numbers satisfying  $1 \le a_n, b_n$ for $n\ge 2$.
     Then we see that $f \in  L^1_{\sf loc}({\mathbb R}^d) \cap \frA$.
We first  show that $f$ belongs to $L^1_{\sf loc}({\mathbb R}^d)$. To this end, it is enough to show that $f$ is integrable on the set $\{|x|\le 1\}$.  So
\begin{align*}
\int_{\{|x|\le 1\}} |f(x)|dx & = \sum_{n=2}^\infty \int_{K^1_n} |x|^{-d a_n} dx \ 
= \sum_{n=2}^\infty   \  \int\limits_{\{(\frac1n)^{1/d} <|x|\le (\frac 1n +\frac 1{n^{2a_n+2}})^{1/d}\}} |x|^{-d a_n} dx \\
& \le \sum_{n=2}^\infty n^{a_n}  \int\limits_{\{(\frac1n)^{1/d} <|x|\le (\frac 1n +\frac 1{n^{2a_n+2}})^{1/d}\}} dx \\
& = c \sum_{n=2}^\infty n^{a_n} \cdot n^{-2a_n-2} =  c \sum_{n=2}^\infty  n^{-a_n-2} \le  c \sum_{n=2}^\infty n^{-2}<\infty
\end{align*}
since $a_n\ge 1$.  We now show that  $f\in \frA$ for  setting $\{f_n=f1_{F_n}\}$ as an $\frA$-approximating sequence of $f$, 
$p_n=q_n=1$ and  $r_n=2$, where  
     \[
     F_n := \left\{x \in \bR^d : \left(\frac{1}{n}\right)^{1/d}
     \le |x| \le n^{1/d}\right\}
     \]
for $n\ge 1$.  As in the previous example,  we see that (Aa) (resp.  (Ab1) and (Ab2)) is automatically satisfied since 
$f-f_n=0$ on $F_n$ (resp. $f_n=0$ on $F_n^c$) for any $n\ge 1$.   To show (Ac2),  take  any $C>1$ and $n\ge 2$. Then we see that 
\begin{align*}
& \!\!\! C^{r_n} \|f\|_{L^{r_n}({\mathbb R}^d \setminus F_n)}^{r_n}  \  = C^2  \|f\|_{L^2({\mathbb R}^d \setminus F_n)}^2  \\
& =C^2 \Big(
\sum_{k=n+1}^\infty \int\limits_{\{(\frac1k)^{1/d} <|x|\le (\frac 1k +\frac 1{k^{2a_k+2}})^{1/d}\}} 
  |x|^{-2d a_k} dx 
+ \sum_{k=n+1}^\infty \int\limits_{\{ (k-\frac 1{k^{2b_k+2}})^{1/d} <|x|\le k^{1/d}\}}  
 |x|^{2d b_k} dx \Big) \\
& \le C^2 c  \Big(\sum_{k=n+1}^\infty k^{2a_k} \cdot k^{-2a_k-2} + \sum_{k=n+1}^\infty k^{2b_k} \cdot k^{-2b_k-2} \Big)  \\
& = 2 C^2 c \sum_{k=n+1}^\infty k^{-2} \to 0 \quad  (n\to \infty).
\end{align*}

\hfill \fbox{}

\end{example}

\medskip

We introduce another example for which the approximating sequence of parameters varies ($\{r_n\}$,  in this case).

\begin{example} We put 
$$
f(x)=|x|^{-d} \left(\log |x| \right) ^{-2} 1_{B(0,1/e)}(x) + \sum _{n =1} ^\infty |x|^{-d + 1/\log(n+1)} 1 _{K_n} (x) , \quad x\in{\mathbb R}^d, 
$$
where $K_n = \big\{ x\in {\mathbb R} :  n \le |x| < n+1 \big\}$ for $n\ge 1$.  Then 
$f\not\in L^p({\mathbb R}^d)$ for $1\le p\le \infty$ but $f\in L^1_{\sf loc}({\mathbb R}^d)\cap \frA$.  We first show that 
$f\not\in L^p({\mathbb R}^d)$  for $1\le p<\infty$. When $p=1$, 
\begin{align*}
\|f\|_{L^1} & \ge \sum_{n=1}^\infty \int_{n \le |x| <n+1} |x|^{-d+1/\log(n+1)}dx  \\
& =  c_d \sum_{n=1}^\infty \int_n^{n+1} u^{-1-1/\log(n+1)} du \\
& \ge c_d \sum_{n=1}^\infty (n+1)^{-1}  \ \underline{ \ (n+1)^{-1/\log (n+1)} }_{\scriptsize \ {\rm bounded  \ in } \ n} =\infty.
\end{align*}
When $1<p<\infty$, 
\begin{align*}
\|f\|_{L^p}^p &  \ge  \int_{|x|<1/e} |x|^{-pd}\big( \! \log |x|\big)^{-2p}dx \ =c_d \int_0^{1/e} u^{-d(p-1)-1} (\log u)^{-2p}du \quad  (t=\log u) \\
& = c_d \int_1^\infty e^{d(p-1)t} t^{-2p}dt =\infty.
\end{align*}
As for the locally integrability  of $f$,  it is enough to see the integrability around the origin:
$$
\int_{B(0,1/e)} |x|^{-d} \big(\! \log |x|\big)^{-2} dx =c_d \int_0^{1/e} u^{-1} (\log u)^{-2}du =c_d \int_1^\infty t^{-2} dt <\infty \quad (t=\log u).
$$
We now show $f\in \frA$.  To this end, put $F_n=\{ x\in {\mathbb R}^d : |x| \le n\}$, $p_n=q_n=1$,  $r_n=\log (n+1)$ and set
$f_n=f 1_{F_n}$ for $n \ge1$.  As in the previous examples, we see that $\|f-f_n\|_{L^{p_n}(F_n)}=\|f_n\|_{L^{q_n}(F_n^c)}=0$ for any $n\ge 1$, hence 
(Aa) and (Ab2) hold.   We finally show (Ac1).  Let $C>1$. Then for any $n \ge 2$, 
\begin{align*} 
\sum_{k=n}^\infty \|f\|_{L^{r_k}(F_{k+1}\setminus F_k)} & = 
\sum_{k=n}^\infty \Big( \int_{k \le |x|< k+1} |x|^{-d\log (k+1) +1} dx \Big)^{1/\log(k+1)}   \\
& = \sum_{k=n}^\infty \Big( c_d \int_k^{k+1} u^{-d(\log(k+1)-1)} du \Big)^{1/\log(k+1)}   \\
& \le  \sum_{k=n}^\infty k^{-d} \underline{ \ (c_dk^d)^{1/\log(k+1)} }_{ \ \scriptsize \ {\rm bounded \ in} \ k}  \ \longrightarrow  0 \quad (n\to \infty)
\end{align*}
because $d\ge 2$. \hfill \fbox{}

\end{example}

We noted in Remark \ref{rem-01} that the condition (Ab1) (resp.  (Ab2)) is not necessarily stronger 
than (Ab2) (resp. (Ab1)) and so are the conditions (Ac1) and (Ac2).  
The following example shows such a situation indeed happens for the conditions (Ac1) and (Ac2).

\begin{example}
\begin{itemize}
\item[(i)] Set $f(x)=|x|^{-1} 1_{B(0,1)^c}(x), \ x\in \real^d$. Then we find that 
the condition (Ac2) holds but not (Ac1) for $r_n=d+1$ and $F_n=\{|x|\le n\}$. In fact, 
similar to \eqref{ex-3b}, we see that, for any $C>1$ and $n\ge 1$, 
$$
C^{r_n} \|f\|_{L^{r_n}(F_n^c)}^{r_n}  = C^{d+1} \int_{|x|\ge n} |x|^{-d-1}dx 
=  c_d C^{d+1} \int_n^\infty u^{-2}du \to 0 \quad (n\to\infty)
$$
holds, and then (Ac2) is satisfied.  As a result, it follows $f\in{\mathfrak A}$.
But we see (Ac1) fails since, for any $n\ge 1$, 
\begin{align*}
\sum_{k=n}^\infty \|f\|_{L^{r_k}(F_{k+1}\setminus F_k)} 
&  = \sum_{k=n}^\infty  \Big(\int_{k<|x|\le k+1} |x|^{-d-1}dx \Big)^{1/(d+1)}  \\
&  = \sum_{k=n}^\infty  \Big(\frac{c_d}{k(k+1)}\Big)^{1/(d+1)} \ge c_d^{1/(d+1)} \sum_{k=n}^\infty 
(k+1)^{-2/(d+1)} =\infty
\end{align*}
and $m(F_{k+1}\setminus F_k)={\sf vol}(\{ k<|x|\le k+1\}) \ge  c_d k^{d-1} >1$ for $k\ge 1$.

\item[(ii)] Put $\ds f(x)=\sum_{k=1}^\infty |x|^d 1_{K_{2k-1}}(x) + \sum_{k=1}^\infty |x|^{-d} 1_{K_{2n}}(x), \  
x\in \real^d$, where 
$$\left\{
\begin{array}{rl}
K_{2k-1} & := \Big\{x\in \real^d : \  (2k-1)^{1/d} < |x| \le  \big(2k-1 +(2k-1)^{-3}\big)^{1/d} \Big\},  \\
& \vspace*{-6pt} \\
K_{2k} & := \Big\{x\in \real^d : \  (2k)^{1/d} < |x| \le  \big(2k +(2k)^{-\delta \log(2k+2)}\big)^{1/d} \Big\} \\
\end{array}
\right.
$$
for $k\ge 1$ and for some $\delta>1$. Then, for  $F_n=\{|x|\le n^{1/d} \}$ and 
$r_n= \left\{ 
\begin{array}{ll} 
\log(n+1)  &  {\rm if} \  n\   {\rm is \ even}, \\ 
 1  &  {\rm if}  \  n \ {\rm  is \  odd},  \\
 \end{array} \right. 
 $
	we see that  the condition (Ac1) holds but (Ac2) fails.  
	Take any $C>1$ and any even number $n\ge 1$, namely $n=2m$ for any $m \ge 1$,  then we see that 
\begin{align*}
C^{r_n} \|f\|_{L^{r_n}(F_n^c)} 
& =C^{\log (2m+1)} \int_{|x|>(2m)^{1/d}} |f(x)|^{\log(2m+2)}dx \\
& \ge C^{\log (2m+1)} \sum_{k\ge 2m \atop k=2\ell-1: {\rm odd}}  \ \ 
\int\limits_{ k^{1/d} < |x| \le  (k+1/k^3)^{1/d}} |x|^{d\log(2m+2)}dx \\
& \ge c_d C^{\log (2m+1)} \sum_{\ell=m+1}^\infty (2\ell-1)^{\log(2m+2)-3} =\infty.
\end{align*}
Thus this means (Ac2) fails. On the other hand, for $n\ge 2$, 
\begin{align*}
\sum_{k=n}^\infty \|f\|_{L^{r_k}(F_{k+1}\setminus F_k)}  
& =\sum_{k=n}^\infty  \Big( \int\limits_{k^{1/d} < |x| \le (k+1)^{1/d}}  \hspace*{-10pt} |f(x)|^{r_k}dx \Big)^{1/r_k} \\
& = \sum_{k\ge n \atop k=2m}^\infty  
\Big( \int\limits_{(2m)^{1/d} < |x| \le (2m+1)^{1/d}}    |f(x)|^{r_{2m}}dx \Big)^{1/r_{2m}} \\
& \qquad 
+ \sum_{k\ge n \atop k=2m-1}^\infty  
\Big( \int\limits_{(2m-1)^{1/d} < |x| \le (2m)^{1/d}}    |f(x)|^{r_{2m-1}}dx \Big)^{1/r_{2m-1}} \\
& = \sum_{k\ge n \atop k=2m}^\infty  
\Big( \int\limits_{K_{2m}}    |x|^{-d\log(2m+2)}dx \Big)^{1/\log(2m+2)} + 
 \sum_{k\ge n \atop k=2m-1}^\infty  \Big(\int_{K_{2m-1}} |x|^d dx \Big) \\
 & \le c_d \sum_{m=[n/2]}^\infty (2m)^{-\delta-1} 
 + c_d \sum_{m=[(n+1)/2]}^\infty (2m+1) \cdot (2m-1)^{-3}
\end{align*}
goes to $0$ when $n\to\infty$, and hence, (Ac1) holds. We also see that $f$ belongs to ${\mathfrak A}$.
\hfill \fbox{}
\end{itemize}

\end{example}

\subsection{Remarks}

In the following, we denote by ${\sf A}^{\mu}_t$ the {\sf PCAF} in the Revuz correspondence for 
$\mu \in {\cS}$. If $\mu(dx)= f(x) m(dx)$ for some  $f \in \dot{L}_{\sf loc}^1(E;m) \cap \cB_+(E)$, 
then $\mu \in {\cS} $ and ${\sf A}^\mu_t = \int_0^t f(X_s)ds$ by Lemma~\ref{lem-S0}. 
In this subsection, we present two cases in which the corresponding {\sf PCAF} ${\sf A}^{\mu}_t$ can be 
concretely expressed  when $\mu \in {\cS}$ is not necessarily absolutely continuous with respect to $m$. 

\subsubsection{One-dimensional diffusion processes}

Let ${\sf X} =[\Omega^{\sf X}, X_t, {\mathbb P}^{\sf X}_x, \zeta^{\sf X}]$  be a one-dimensional 
diffusion process ({\sf  ODDP} for short)  on an open interval  $I=(l_1,l_2)$ with no killing inside $I$. 
It is well known that ${\sf X}$ is characterized by a scale function $s^{\sf X}$ and a speed measure 
$m^{\sf X}$, and a boundary condition (reflecting or absorbing) at $l_i$ if it is regular.  $s^{\sf X}$ 
is a strictly increasing continuous function on $I$ and $m^{\sf X}$ is a positive Radon measure on $I$
with $\mbox{supp} [m^{\sf X}]=I$.  $m^{\sf X}$ is identified with strictly increasing right continuous 
function induced by $m^{\sf X}$.

We consider the following symmetric
bilinear form $({\cE}^{\sf X},{\cF}^{\sf X})$:
\begin{align*}
	&{\cE}^{\sf X}(u,v) =
	\int_I \frac{du}{ds^{\sf X}}\,\frac{dv}{ds^{\sf X}}\,ds^{\sf X},\\
	&{\cF}^{\sf X}  = \Big\{
	u \in L^2(I^*; m^{\sf X}) : u\ \mbox{satisfies (\ref{tu1}), (\ref{tu2}) and (\ref{tu3})} \Big\}, 
\end{align*}
where $I^*$ is the interval extended by adding $l_i$ to $I$
if $l_i$ is regular (i.e.  $| s^{\sf X}(l_i)| + | m^{\sf X}(l_i)| < \infty$)  with reflecting boundary condition,
and $m^{\sf X}$ is extended to $I^*$ so that $m^{\sf X}(I^* \setminus I)=0$.
\begin{align}\label{tu1}
&\ u(x) \ \mbox{is absolutely continuous with respect to} \ s^{\sf X}(x) \ \mbox{on}\  I,\\
& \label{tu2}
\ \frac{du}{ds^{\sf X}} \in L^2(I; s^{\sf X}),\qquad
u(b)-u(a)= \int_{(a,b)} \frac{du}{ds^{\sf X}}\, ds^{\sf X}\quad (a,b \in I),
\\
&
\lim_{x \to l_i}u(x)=0\ \mbox{if\ $|s^{\sf X}(l_i)| <\infty$ but $l_i$ is not regular with reflecting boundary condition} \  (i=1,2).  \label{tu3}
\end{align}
Then  $({\cE}^{\sf X}, {\cF}^{\sf X})$ is a regular, strongly local, irreducible Dirichlet form on 
$L^2(I^*;m^{\sf X})$ corresponding to the {\sf ODDP} ${\rm X}$, and each one point of $I^*$ has a 
positive capacity relative to  $({\cE}^{\sf X}, {\cF}^{\sf X})$ (\cite{CF}, \cite{F2010}, \cite{F2014}).
We denote the corresponding items for $({\cE}^{\sf X}, {\cF}^{\sf X})$ by $\mbox{Cap}^{\sf X}$, 
${\cS}^{\sf X}$,  ${\cS}^{\sf X}_0$,  ${\cS}^{\sf X}_{00}$, etc.  
In the same way as in \cite[Example~2.1.2]{FOT},  we obtain the following.
\begin{align} \label{cap}
\mbox{Cap}^{\sf X}( \{ y \})= 1/g^{\sf X}_1(y,y)\  (>0),\ y \in I^*,
\end{align}
where
$g^{\sf X}_1(x, y)$ is the resolvent kernel corresponding to $({\cE}^{\sf X}, {\cF}^{\sf X})$.
We also note that the set $\big\{ u(s^{\sf X}(\cdot )): u \in C^1_c(J^*)\big\}$
is a core of $({\cE}^{\sf X},{\cF}^{\sf X})$, where $J^*=s^{\sf X}(I^*)$ and $C^1_c(A)$ is the set of all 
$C^1$-functions on $A$ with compact support.

Let $\mu$ be  a positive Borel measure  on $I^*$.  Noting (\ref{cap}) and following the same argument 
as in  \cite[Example~2.1.2]{FOT},  we see the following.
\begin{align} \label{S1}
\mu \in {\cS}^{\sf X} \quad \mbox{if and only if}\quad \mbox{$\mu$ is a positive Radon measure on $I^*$}.
\end{align}
By virtue of 
\cite[Exercise 4.2.2]{FOT}, we see that 
\begin{align}\label{S2}
\mu \in {\cS}^{\sf X}_0 \quad \mbox{if and only if}\quad 
\iint_{I^* \times I^*} g^{\sf X}_1(x,y) \mu(dx) \mu(dy) <\infty,
\end{align}
and, in this case,
\[
R^{\sf X}_1 \mu(x) : = \int_{I^*} g^{\sf X}_1(x,y) \mu(dy)
\]
is a quasi continuous and 1-excessive version of $U^{\sf X}_1 \mu$.
By means of (\ref{S1}) and (\ref{S2}), we get $  {\cS}^{\sf X}_{00}={\cS}^{\sf X}_0= {\cS}^{\sf X}$ 
if $|s^{\sf X}(l_i)|+|m^{\sf X}(l_i)|<\infty$ and $l_i$ is reflecting for $i=1$ and 2. 

We assume that {\sf X} is conservative, that is, $\zeta^{\sf X}=\infty$. 
Let $\mu \in {\cS}^{\rm X}$.  Employing \cite[Theorem 5.1.4]{FOT},  we have
\begin{align}
\label{AL}
{\sf A}^\mu_t = \int_{I^*}  \ell^{\sf X}(t, x) \mu(d x),
\end{align}
where $\ell^{\sf X}(t, x) $ is the local time of {\sf X} and it is continuous with respect to $(t, x) \in [0,\infty) \times I^*$
and satisfies  $ \int_0^t 1_A(X_u)\, d u = \int_A \ell^{\sf X} (t , x)\,m^{\sf X}(dx), \ t>0$,
for every measurable set $A \subset I$,
where $1_A$ is the indicator for a set $A$ (\cite{IM}).  The relation (\ref{AL}) 
 for one-dimensional Brownian motion is obtained in  \cite[Example~5.1.1]{FOT}.
Let $\mu_n,\ \mu \in {\cS}^{\sf X}$ and assume $\mu_n \to \mu$ vaguely as $n \to \infty$.
By virtue of (\ref{AL}), we obtain the following.
\begin{align} \label{ALn}
{\mathbb P}^{\sf X}_x \left( \lim_{n \to \infty} {\sf A}^{\mu_n}_t = {\sf A}^{\mu}_t\ \mbox{locally  uniformly in $t$ 
 on} \ [0,\infty) \right)=1, \quad x \in I^*.
\end{align}


\subsubsection{Skew product diffusion processes}

Let ${\sf X}=[\Omega^{\sf X}, X_t, {\mathbb P}^{\sf  X}_x, \zeta^{\sf X}]$ be the {\sf ODDP} as above,
where $ 0=l_1 <l_2 \leq \infty$ and $ l_1=0$ is entrance in the sense of Feller, that is, 
$\int_{(0, c)} m^{\sf X} (dx) \int_{(x,c)} s^{\sf X} (dy) <\infty$ and 
$\int_{(0, c)} s^{\sf X} (dx) \int_{(x,c)} m^{\sf X} (dy) =\infty$ for $ 0<c <l_2$.
Let
$ \Theta =[\Omega^\Theta, \Theta_t, {\mathbb P}^{\Theta}_\theta]$ be
the spherical Brownian motion on $ S^{d-1} \subset {\bR}^d$ with 
generator $ \frac{1}{2} \Delta$, $\Delta$ being the spherical Laplacian on 
$ S^{d-1}$.
The corresponding Dirichlet form $({\cE}^{\Theta},{\cF}^{\Theta})$
on $L^2(S^{d-1}, m^{\Theta})$
is obtained by Fukushima and Oshima \cite{FO},
where $m^{\Theta}$ is the spherical measure on $S^{d-1}$. 
Let $  \nu$ be a Radon measure on $I$ and assume that
$\mbox{supp[$ \nu$]}=I$.
Furthermore we assume that 
$\int_{(0,c)} | s^{\sf X} (x)| \nu(dx)=\infty$, and 
 $ \int_{(c, l_2)} \nu (dx) <\infty$
whenever $s^{\sf X}(l_2) + m^{\sf X}(l_2)<\infty$.
We set 
\begin{align}\label{eq217}
	{\bf f}(t)=
	\left\{
	\begin{array}{ll}
\ds	\int_I \ell^{\sf X}(t,x)\, \nu(dx), & t < \zeta^{\sf X},\\
	\infty, & t \geq \zeta^{\sf X}.	
	\end{array}
	\right.
\end{align}
Put  $ \Lambda := I^* \times S^{d-1}$ and
let ${\Xi} =[\Xi_t=(X_t, \Theta_{{\bf f}(t)}),
{\mathbb P}^{\Xi}_{\xi}=
{\mathbb P}^{\sf X}_x \otimes {\mathbb P}^{\Theta}_\theta,\ \xi =(x,\theta) \in \Lambda, \zeta^\Xi]$ 
be the skew product  of the {\sf ODDP} {\sf X} and the spherical Brownian motion ${\Theta}$ with 
respect to the {\sf PCAF} $\{{\bf f}(t)\}$.
Let us consider the following bilinear form.

\begin{align}
{\cE}^{\Xi}(f,g)=&\int_{S^{d-1}} {\cE}^{\rm X}( f(\cdot, \theta),
g(\cdot, \theta))\, m^{\Theta}(d \theta)
+
\int_{I} {\cE}^{\Theta}(f(x,\cdot),g(x,\cdot))\,  \nu(dx),
\label{eq219}
\end{align}
for $f,g \in {\cC}^{\Xi}$,  where ${\cC}^{\Xi}$
is the set of all linear combinations of $u^{(1)}(s^{\rm X}(x)) u^{(2)}(\theta)$ with 
$u^{(1)} \in C_0^1(J^*)$  and $u^{(2)} \in C^\infty(S^{d-1})$,
where $J^*=s^{\sf X}(I^*)$.
We put $m^\Xi =m^{\sf X} \otimes m^{\Theta}$.
By using some results from  \cite{FO} and \cite{O}, we see that
the form $({\cE}^{\Xi},{\cC}^{\Xi})$ 
is closable on $ L^2( \Lambda;m^\Xi)$, and the closure $({\cE}^{\Xi},{\cF}^{\Xi})$ is a regular irreducible Dirichlet 
form and it is corresponding to the skew product ${\Xi}$.
We denote the corresponding items for $({\cE}^{\Xi}, {\cF}^{\Xi})$ by ${\cS}^{\Xi}$,  
${\cS}^{\Xi}_0$,  ${\cS}^{\Xi}_{00}$, etc. 
We show the following.

\begin{proposition}\label{prS0}
Let $\mu^{\sf X}$ be a positive Radon measure on $I^*$.
Put 
$\mu^\Xi(dx, d\theta) =\mu^{\sf X} (dx) m_o(\theta) m^{\Theta}(d\theta)$, where
$m_o$ is a bounded measurable function on $S^{d-1}$. Then the following hold$:$

\begin{itemize}
\item[\rm (1)]  $\mu^\Xi \in {\cS}^{\Xi}$.

In particular, if ${\sf X}$ is conservative and $m_o=1$, then
\begin{align} \label{AS}
{\sf A}^{\mu^\Xi}_t= \int_{I^*} \ell^{\sf X}(t, x) \mu^{\sf X}( dx), \qquad {\mathbb P}^{\Xi}_{\xi} 
\text{-a.e.}
\quad
\xi \in \Lambda.
\end{align}

\item[\rm (2)] If  $\mu^{\sf X} \in {\cS}_0^{\sf X}$  \ (resp.~${\cS}_{00}^{\sf X}$), then 
$\mu^\Xi \in {\cS}_0^{\Xi}$  \ (resp.~${\cS}_{00}^{\Xi}$).
\end{itemize}
\end{proposition}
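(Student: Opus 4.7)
The plan is to exhibit an explicit PCAF of $\Xi$ whose Revuz measure is $\mu^\Xi$; by the bijectivity of the Revuz correspondence (Theorem~\ref{thm-Revuz}), this will force $\mu^\Xi \in \cS^\Xi$. Since $\mu^{\sf X}$ is a positive Radon measure on $I^*$, (\ref{S1}) gives $\mu^{\sf X} \in \cS^{\sf X}$, so there is an associated PCAF $\mathsf{A}^{\mu^{\sf X}}$ of ${\sf X}$ (equal to $\int_{I^*} \ell^{\sf X}(t,x)\, \mu^{\sf X}(dx)$ by (\ref{AL}) when ${\sf X}$ is conservative). Viewing $\mathsf{A}^{\mu^{\sf X}}$ as a functional on $\Omega^\Xi = \Omega^{\sf X} \times \Omega^\Theta$ (it depends only on $\omega^{\sf X}$), I would set
\[
{\sf B}_t(\omega^\Xi) := \int_0^t m_o\bigl(\Theta_{{\bf f}(s)}(\omega^\Theta)\bigr)\, d\mathsf{A}^{\mu^{\sf X}}_s(\omega^{\sf X}),
\]
and verify the PCAF conditions (A.1)--(A.3) for $\Xi$: measurability and continuity are inherited from $\mathsf{A}^{\mu^{\sf X}}$ together with the Borel-measurability and boundedness of $m_o$; the exceptional set may be taken as $N^{\sf X} \times S^{d-1}$, which is $\Xi$-exceptional because the product form (\ref{eq219}) forces $\mathrm{Cap}^{\sf X}(N^{\sf X}) = 0$ to imply $\mathrm{Cap}^\Xi(N^{\sf X} \times S^{d-1}) = 0$.

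To identify the Revuz measure of ${\sf B}$ I would test against product functions $f(x,\theta) = f_1(x) f_2(\theta) \ge 0$, exploiting the product structure $\bP^\Xi_\xi = \bP^{\sf X}_x \otimes \bP^\Theta_\theta$. Conditioning on $\omega^{\sf X}$ renders ${\bf f}(s)$ deterministic, so the $\Theta$-expectation gives
\[
\bE^\Xi_\xi\!\left[\int_0^t f_1(X_s) f_2(\Theta_{{\bf f}(s)}) m_o(\Theta_{{\bf f}(s)})\, d\mathsf{A}^{\mu^{\sf X}}_s\right] = \bE^{\sf X}_x\!\left[\int_0^t f_1(X_s)\, \bigl(p^\Theta_{{\bf f}(s)}(f_2 m_o)\bigr)(\theta)\, d\mathsf{A}^{\mu^{\sf X}}_s\right],
\]
where $p^\Theta_u$ denotes the spherical semigroup. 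Integrating against $m^\Xi$, dividing by $t$, and sending $t \downarrow 0$, the strong continuity $p^\Theta_{{\bf f}(s)}(f_2 m_o) \to f_2 m_o$ as $s \downarrow 0$ (since ${\bf f}(0) = 0$) combined with the Revuz formula for $\mathsf{A}^{\mu^{\sf X}}$ under ${\sf X}$ produces
\[
\int_{I^*} f_1\, d\mu^{\sf X} \cdot \int_{S^{d-1}} f_2 m_o\, dm^\Theta = \int_\Lambda f\, d\mu^\Xi,
\]
so ${\sf B}$ has Revuz measure $\mu^\Xi$, proving (1); the formula (\ref{AS}) is the specialization to $m_o \equiv 1$ and ${\sf X}$ conservative, for which ${\sf B}_t = \mathsf{A}^{\mu^{\sf X}}_t$.

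For (2), I would verify the finite-energy bound on the core $\cC^\Xi$ and extend by density. Using the $\cS_0^{\sf X}$-bound (with constant $C$) slice-by-slice in $\theta$ followed by Cauchy--Schwarz,
\[
\int_\Lambda |v|\, d\mu^\Xi \le \|m_o\|_\infty \int_{S^{d-1}}\! C \sqrt{\cE^{\sf X}_1(v(\cdot,\theta),v(\cdot,\theta))}\, m^\Theta(d\theta) \le \|m_o\|_\infty C \sqrt{m^\Theta(S^{d-1})}\, \sqrt{\cE^\Xi_1(v,v)},
\]
where the last step uses $\int_{S^{d-1}} \cE^{\sf X}_1(v(\cdot,\theta),v(\cdot,\theta))\, m^\Theta(d\theta) \le \cE^\Xi_1(v,v)$, an immediate consequence of (\ref{eq219}); thus $\mu^\Xi \in \cS_0^\Xi$. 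If moreover $\mu^{\sf X} \in \cS_{00}^{\sf X}$, then $\mu^\Xi(\Lambda) = \mu^{\sf X}(I^*)\int m_o\, dm^\Theta < \infty$, and applying Lemma~\ref{lem-pre-00} to ${\sf B}$ yields $U_1^\Xi \mu^\Xi(\xi) \le \|m_o\|_\infty U_1^{\sf X}\mu^{\sf X}(x)$, which is bounded. The main obstacle lies in step two: rigorously justifying the $s \downarrow 0$ limit $p^\Theta_{{\bf f}(s)}(f_2 m_o) \to f_2 m_o$ in a mode compatible with both the inner $d\mathsf{A}^{\mu^{\sf X}}_s$-integration and the outer limit $t \downarrow 0$; one likely first handles continuous bounded test functions via dominated convergence (exploiting the boundedness of $m_o$ and the tightness of $\mathsf{A}^{\mu^{\sf X}}_s/s$ near zero), and then extends to the Borel-bounded case by approximation.
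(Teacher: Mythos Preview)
Your approach is correct but proceeds quite differently from the paper's. The paper reverses the order: it first proves (2) (that $\mu^{\sf X}\in\cS_0^{\sf X}$ implies $\mu^\Xi\in\cS_0^\Xi$, then $\cS_{00}$), and only afterwards deduces (1) for general smooth $\mu^{\sf X}$ by the nest characterization \cite[Theorem~2.2.4]{FOT}: take a compact nest $\{F_n\}$ with $1_{F_n}\mu^{\sf X}\in\cS_{00}^{\sf X}$, set $F_n^\Xi=F_n\times S^{d-1}$, and apply (2). For the $\cS_0$ bound the paper invokes Lemma~\ref{lemS0}(1), which sends $V\in\cF^\Xi$ to its $m_o\,m^\Theta$-average $v(x)=\int V(x,\theta)m_o(\theta)\,m^\Theta(d\theta)\in\cF^{\sf X}$ and controls $\cE_1^{\sf X}(v,v)$ by $\cE_1^\Xi(V,V)$; your slice-by-slice Cauchy--Schwarz argument is an equally valid alternative that avoids setting up that lemma. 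The more substantive difference is in identifying the PCAF: rather than verifying the Revuz limit $t\downarrow 0$ directly (the step you correctly flag as delicate), the paper computes the $1$-potential
\[
\bE^\Xi_\xi\Bigl[\int_0^\infty e^{-t}\,dC_t\Bigr]
=\int_{I^*} g_1^{\sf X}(x,y)\,\mu^{\sf X}(dy)
=U_1^{\sf X}\mu^{\sf X}(x)
=U_1^\Xi\mu^\Xi(\xi)
\]
(the last equality by Lemma~\ref{lemS0}(2)) and then invokes \cite[Theorem~5.1.2]{FOT}. This is a pure Fubini computation with no small-$t$ limit, so it bypasses your ``main obstacle'' entirely; your route works too, but the $1$-potential characterization is the cleaner tool here. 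Finally, your exceptional-set concern is moot in this setting since every point of $I^*$ has positive $\mathrm{Cap}^{\sf X}$-capacity, so $N^{\sf X}=\varnothing$.
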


\medskip
We show this by means of the following lemma which 
can be easily shown from 
the fact that ${\cC}^\Xi$ is a core of $({\cE}^{\Xi}, {\cF}^{\Xi})$, 
so we will omit its proof.

\begin{lemma}\label{lemS0}

Let $V$ be a function on $\Lambda $ and put $v(x)= \int_{S^{d-1}} V(x, \theta) m_o(\theta) m^{\Theta}(d \theta)$.
\begin{itemize}
\item[\rm (1)]  If $V \in {\cF}^\Xi $ \ (resp. $ C_0(\Lambda) \cap {\cF}^\Xi $), then 
$ v \in  {\cF}^{\sf X} $ \ (resp. $ C_0( I^* ) \cap {\cF}^{\sf X}$), and
\begin{align*}
 {\cE}^{\sf X}(v,v)  \leq A_{d-1} \| m_o\|^2_{L^\infty(S^{d-1})}   {\cE}^\Xi(V,V),\qquad 
 & \| v\|^2_{L^2(I^* , m^{\rm X})}\leq
 A_{d-1}  \| m_o\|^2_{L^\infty(S^{d-1})}  \| V \|^2_{L^2(\Lambda, m^\Xi)}.
\end{align*}
\item[\rm (2)]  Let $u \in {\cF}^{\sf X}$. Then $U(x, \theta) :=u(x) \in {\cF}^\Xi$ and
${\cE}^\Xi_1 (U, U) =A_{d-1} {\cE}_1^{\sf X}(u, u)$.

In particular, if $V \in {\cF}^\Xi $ and $m_o=1$, then
$ {\cE}^\Xi_1 (U, V) = {\cE}_1^{\sf X}(u,v)$.
\end{itemize}
\end{lemma}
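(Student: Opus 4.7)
The plan is to reduce everything to the core $\cC^\Xi$, where each element is a finite sum $\sum_k u^{(1)}_k(s^{\sf X}(\cdot))\,u^{(2)}_k(\cdot)$ of explicit product functions, verify the claims there by direct computation, and then pass to the limit using the closedness of $\cE^{\sf X}$ together with Cauchy--Schwarz. Throughout, I will write $A_{d-1}=m^\Theta(S^{d-1})$ (the surface area of $S^{d-1}$).

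For part (1), given $V\in\cF^\Xi$, I would pick $V_n\in\cC^\Xi$ with $V_n\to V$ in $\cE^\Xi_1$ and set $v_n(x)=\int_{S^{d-1}} V_n(x,\theta)m_o(\theta)\,m^\Theta(d\theta)$. Since each $V_n$ is a finite sum of products, $v_n$ is itself a finite linear combination of functions $u^{(1)}_k(s^{\sf X}(\cdot))\in C_c^1(J^*)\circ s^{\sf X}$, and the derivative passes under the integral:
\begin{equation*}
\frac{dv_n}{ds^{\sf X}}(x)=\int_{S^{d-1}}\frac{\partial V_n}{\partial s^{\sf X}}(x,\theta)\,m_o(\theta)\,m^\Theta(d\theta).
\end{equation*}
Applying Cauchy--Schwarz in $\theta$ to both $v_n(x)$ and $dv_n/ds^{\sf X}(x)$, with the crude bound $\int m_o^2\,dm^\Theta\le A_{d-1}\|m_o\|_{L^\infty}^2$, then integrating against $dm^{\sf X}$ (resp.\ $ds^{\sf X}$) and invoking nonnegativity of the $\cE^\Theta$-term in \eqref{eq219}, one obtains
\begin{equation*}
\|v_n\|_{L^2(m^{\sf X})}^2\le A_{d-1}\|m_o\|_{L^\infty}^2\|V_n\|_{L^2(m^\Xi)}^2,\qquad \cE^{\sf X}(v_n,v_n)\le A_{d-1}\|m_o\|_{L^\infty}^2\,\cE^\Xi(V_n,V_n).
\end{equation*}
Hence $\{v_n\}$ is $\cE^{\sf X}_1$-Cauchy; since the same Cauchy--Schwarz bound shows $v_n\to v$ in $L^2(m^{\sf X})$, closedness of $(\cE^{\sf X},\cF^{\sf X})$ yields $v\in\cF^{\sf X}$ and the two bounds pass to the limit. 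For the $C_0$ refinement, if $V\in C_0(\Lambda)\cap\cF^\Xi$, its support lies in $K\times S^{d-1}$ for some compact $K\subset I^*$, so dominated convergence gives continuity of $v$ and $v\equiv 0$ off $K$, so $v\in C_0(I^*)\cap\cF^{\sf X}$.

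For part (2), the decisive observation is that $U(x,\theta)=u(x)$ is constant in $\theta$, whence the spherical energy $\cE^\Theta(U(x,\cdot),U(x,\cdot))$ vanishes identically. Choosing an approximation $u_n=\phi_n\circ s^{\sf X}$ with $\phi_n\in C_c^1(J^*)$ converging to $u$ in $\cE^{\sf X}_1$ and setting $U_n(x,\theta):=u_n(x)\cdot 1\in\cC^\Xi$, formula \eqref{eq219} gives directly $\cE^\Xi(U_n,U_m)=A_{d-1}\cE^{\sf X}(u_n,u_m)$ and $(U_n,U_m)_{L^2(m^\Xi)}=A_{d-1}(u_n,u_m)_{L^2(m^{\sf X})}$. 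Therefore $\{U_n\}$ is $\cE^\Xi_1$-Cauchy with $L^2(m^\Xi)$-limit $U$, closedness of $(\cE^\Xi,\cF^\Xi)$ places $U$ in $\cF^\Xi$, and the identity $\cE^\Xi_1(U,U)=A_{d-1}\cE^{\sf X}_1(u,u)$ passes to the limit. For the final ``in particular'' assertion with $m_o\equiv 1$, a direct computation on $V\in\cC^\Xi$ using the vanishing of the $\cE^\Theta$-term and Fubini yields $\cE^\Xi(U,V)=\int_{S^{d-1}}\cE^{\sf X}(u,V(\cdot,\theta))\,m^\Theta(d\theta)=\cE^{\sf X}(u,v)$, and analogously $(U,V)_{L^2(m^\Xi)}=(u,v)_{L^2(m^{\sf X})}$; both sides are continuous in $V$ with respect to $\sqrt{\cE^\Xi_1}$ (the right-hand sides by part (1)), so the equality extends to all $V\in\cF^\Xi$ by density of $\cC^\Xi$.

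The main obstacle I anticipate is purely bookkeeping: guaranteeing that the termwise differentiation under $\int m^\Theta(d\theta)$ is legal at the pre-closure level, which is why working first on $\cC^\Xi$ (where each summand is a genuine tensor product of smooth functions) is essential before invoking closedness. Nothing deeper is needed beyond Cauchy--Schwarz, Fubini, and the core property of $\cC^\Xi$.
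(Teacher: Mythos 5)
Your proof is correct and takes precisely the route the paper intends: the paper omits the proof entirely, remarking only that the lemma ``can be easily shown from the fact that $\cC^{\Xi}$ is a core of $(\cE^{\Xi},\cF^{\Xi})$'', and your argument --- direct verification on the core via Cauchy--Schwarz and Fubini, then passage to the limit by closedness of $(\cE^{\sf X},\cF^{\sf X})$ and $(\cE^{\Xi},\cF^{\Xi})$ --- is exactly the filling-in of that remark. No substantive difference to report.
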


 \begin{proof}[Proof of Proposition \ref{prS0}]
(step 1)\  Let $\mu^{\sf X} \in {\cS}^{\sf X}_0$ and put 
$\mu^\Xi(dx, d\theta) =\mu^{\sf X} (dx) m_o(\theta) m^{\Theta}(d\theta)$. 
By means of Lemma~\ref{lemS0}, we get 
\[
\int_{\Lambda} | V(x, \theta)| \mu^\Xi(dx,  d\theta) \leq C \sqrt{ {\cE}^\Xi_1(V,V) },\quad 
V \in C_0( \Lambda) \cap {\cF}^\Xi,
\]
for some positive constant $C$,
which implies  $\mu^\Xi \in {\cS}_0^\Xi$.
Assume that {\sf X} is conservative and $m_o=1$.
We put $C_t (\omega, \omega')= \int_{I^*} \ell^{\sf X}(t, x; \omega) \mu^{\sf X}(dx)$ 
for $\omega \in \Omega^{\rm X},\ \omega' \in \Omega^{\rm \Theta}$.
We show 
\begin{align}\label{EQ1}
{\mathbb P}^\Xi_{\xi } ( {\sf A}^{\mu^\Xi}_t =C_t)=1,\ \xi  \in  \Lambda.
\end{align}
From now on, 
we denote the expectation with respect to $\bP_{\ast}^{\Xi}$
(resp. $\bP_{\ast}^{\sf X}$)
by $E^{\bP_{\ast}^{\Xi}}$ (resp. $E^{\bP_{\ast}^{\sf X}}$). 
Let $\xi =(x,\theta) \in \Lambda$.
Noting  \cite[Sect.~5.4, (3)]{IM},
\begin{align*}
E^{{\mathbb P}^\Xi_{\xi}}\left[ \int_0^\infty e^{-t} dC_t \right] & =
 \int_0^\infty e^{-t} E^{{\mathbb P}^\Xi_{\xi}}\left[C_t \right] dt\\
\nonumber
&=
 \int_0^\infty e^{-t} E^{{\mathbb P}^{\sf X}_{x}}\left[C_t \right] dt
=
\int_0^\infty e^{-t} dt \int_{I^*}  E^{{\mathbb P}^{\sf X}_{x}} [ \ell^{\sf X}(t, y) ] \mu^{\sf X}(d y)\\
\nonumber
&=\int_0^\infty e^{-t} dt \int_{I^*}  \mu^{\sf X}(d y) \int_0^t p^{\sf X}(s,x,y)ds\\
&= \int_{I^*}  \mu^{\sf X}(d y) \int_0^\infty e^{-s}  p^{\sf X}(s, x,y)ds\\
&= \int_{I^*} g^{\sf X}_1(x,y) \mu^{\sf X}(d y) =(R^{\sf X}_1 \mu^{\sf X})(x)
=(U^{\sf X}_1 \mu^{\sf X})(x)
=U^\Xi_1 \mu^\Xi(\xi),
\end{align*}
where the last equality follows from Lemma~\ref{lemS0}.
Combining this with \cite[Theorem~5.1.2]{FOT}, we get (\ref{EQ1}).

\smallskip
(step 2)\ Let $\mu^{\sf X} \in {\cS}_{00}^{\sf X}$.
By means of (step 1), $\mu^\Xi \in {\cS}^\Xi_0$.
Then 
$$
\mu^\Xi(\Lambda)\leq \mu^{\sf X}(I^*) \| m_o\|_{L^\infty(S^{d-1};m^\Theta)} A_{d-1} <\infty.
$$ 
Note $U^\Xi_1 \mu^\Xi \leq \| m_o \|_{L^\infty( \Lambda; m^\Xi)} U^\Xi_1 \mu^\Xi_o$\ $m^\Xi$-a.e., 
where $\mu^\Xi_o(dx d\theta) = \mu^{\sf X} (dx) m^\Theta( d \theta)$.
As in (step 1),
 $(U^{\sf X}_1 \mu^{\sf X})(x) = (U^\Xi_1 \mu_o^\Xi)(x,\theta)$
and hence 
$  \|  U^\Xi_1 \mu^\Xi \|_{L^\infty (\Lambda;m^\Xi)} \leq  
\| m_o \|_{L^\infty( \Lambda;m^\Xi)}  \|   U^{\sf X}_1 \mu^{\sf X} \|_{ L^\infty( I^*;m^{\sf X})}  < \infty$.
Therefore $ \mu^\Xi \in {\cS}^\Xi_{00} $.

\smallskip
(step 3)\ Let $\mu^{\sf X} \in {\cS}^{\sf X}$.
By means of \cite[Theorem 2.2.4]{FOT}, there exists a generalized compact nest
$ \{ F_n\}$ satisfying $ \mu^{\sf X}( I^* \setminus \bigcup_{n=1}^\infty F_n) =0$ 
and $\mu^{\sf X}_n := 1_{F_n} \mu^{\sf X} \in {\cS}^{\sf X}_{00}$ for each $n$.  
Put $F^\Xi_n = F_n \times S^{d-1}$.  Then $\{ F^\Xi_n \}$ is a generalized compact nest and 
$ \mu^\Xi ( \Lambda \setminus \bigcup_{n=1}^\infty F^\Xi_n )=0$.
By (step 2),  $\mu^\Xi_n (dx d \theta) := 1_{F^\Xi_n}(x,\theta) \mu^\Xi (dx d\theta) 
= 1_{F_n}(x)\mu^{\sf X}(dx) m_o(\theta) \mu^\Theta( d \theta) \in {\cS}^\Xi_{00}$.
Therefore $\mu^\Xi \in {\cS}^\Xi$  by means of \cite[Theorem 2.2.4]{FOT}.

Assume that {\sf X} is conservative and $m_0=1$. We show (\ref{AS}).
We first note that, by (step 1) and (step 2),
\[
{\sf A}^{\mu^\Xi_n}_t = \int_{F_n} \ell^{\sf X} (t,x)\, \mu^{\sf X}(dx) \to C_t := \int_{I^*} \ell^{\sf X}(t,x) \mu^{\sf X}(dx),
\]
uniformly in $t \in [0, T]$, $T \in (0, \infty)$ as $n \to \infty$\ ${\mathbb P}^\Xi_\xi$-a.s.,  \ $\xi \in \Lambda$.
Let $f$ be a bounded continuous function on $\Lambda$ and $ h \in {\cB}^+(\Lambda)$. 
\begin{align*}
\int_\Lambda h(\xi) m^\Xi(d \xi)
E^{\bP_{\xi}^{\Xi}}\left[ \int_0^t f( \Xi_s) d C_s  \right]
&=\lim_{n \to \infty}
 \int_\Lambda h(\xi) m^\Xi(d \xi)
E^{\bP_{\xi}^{\Xi}}\left[ \int_0^t f( \Xi_s) d A_s^{\mu^{\Xi}_n}  \right]
\\
&=\lim_{n \to \infty}
\int_0^t \langle f \cdot \mu^\Xi_n , p^\Xi_s h \rangle ds 
=
\int_0^t \langle f \cdot \mu^\Xi , p^\Xi_s h \rangle ds.
\end{align*}
This shows (\ref{AS}).

\end{proof}


\medskip
By means of (\ref{AS}), we get the following.

\begin{proposition}\label{prS0n}
Let $\mu^{\sf X}_n,\ \mu^{\sf X} $ be  positive Radon measures on $I^*$ and put
$\mu^\Xi_n(dx, d\theta) =\mu^{\sf X}_n (dx)  m^{\Theta}(d\theta)$, 
$\mu^\Xi(dx, d\theta) =\mu^{\sf X} (dx)  m^{\Theta}(d\theta)$.
Assume that 
 ${\sf X}$ is conservative and 
 $\mu^{\sf X}_n \to \mu^{\sf X} $ vaguely as $ n \to \infty$.
Then
\begin{align} \label{ASn}
{\mathbb P}^{\Xi}_\xi \left( \lim_{n \to \infty} {\sf A}^{\mu^{\Xi}_n}_t = {\sf A}^{\mu^{\Xi}}_t\ \mbox{locally 
uniformly in $t$ on}\ [0,\infty) \right)=1, \quad \xi \in \Lambda.
\end{align}

\end{proposition}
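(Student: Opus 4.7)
The plan is to reduce the claim to a pointwise-in-$\omega$ statement about vague convergence tested against the local time $\ell^{\sf X}(t,\cdot)$. Applying Proposition~\ref{prS0}(1) (with $m_o\equiv 1$) separately to $\mu^{\sf X}_n$ and $\mu^{\sf X}$, and using conservativeness of $\sf X$, we obtain
$$
{\sf A}^{\mu^{\Xi}_n}_t=\int_{I^*}\ell^{\sf X}(t,x)\,\mu^{\sf X}_n(dx),\qquad
{\sf A}^{\mu^{\Xi}}_t=\int_{I^*}\ell^{\sf X}(t,x)\,\mu^{\sf X}(dx),
$$
${\mathbb P}^{\Xi}_{\xi}$-a.s.\ for every $\xi\in\Lambda$. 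Thus it suffices to show that for ${\mathbb P}^{\Xi}_{\xi}$-a.e.\ $\omega$, the map $t\mapsto\int\ell^{\sf X}(t,x;\omega)\,\mu^{\sf X}_n(dx)$ converges locally uniformly on $[0,\infty)$ to $t\mapsto\int\ell^{\sf X}(t,x;\omega)\,\mu^{\sf X}(dx)$.

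Fix such an $\omega$ and $T>0$. Since $s\mapsto X_s(\omega)$ is continuous, the trajectory $\{X_s(\omega):0\le s\le T\}$ is contained in a compact set $K=K(\omega,T)\subset I$, and therefore $\mathrm{supp}\,\ell^{\sf X}(t,\cdot;\omega)\subset K$ for every $t\in[0,T]$. Choose a relatively compact open neighborhood $O$ of $K$ in $I^*$ and $\phi\in C_c(I^*)$ with $0\le\phi\le 1$ and $\phi\equiv 1$ on $O$; vague convergence gives $M:=\sup_n\int\phi\,d\mu^{\sf X}_n<\infty$, and hence $\sup_n\mu^{\sf X}_n(O)\le M$ as well as $\mu^{\sf X}(O)\le M$. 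The joint continuity of $\ell^{\sf X}$ on $[0,\infty)\times I^*$ renders it uniformly continuous on the compact set $[0,T]\times K$: for any $\varepsilon>0$ we can pick a partition $0=t_0<t_1<\cdots<t_N=T$ with $|\ell^{\sf X}(t,x;\omega)-\ell^{\sf X}(t_i,x;\omega)|<\varepsilon$ whenever $t\in[t_{i-1},t_i]$ and $x\in K$. Each $x\mapsto\ell^{\sf X}(t_i,x;\omega)$ lies in $C_c(I^*)$, so vague convergence yields
$$
\lim_{n\to\infty}\int_{I^*}\ell^{\sf X}(t_i,x;\omega)\,\mu^{\sf X}_n(dx)=\int_{I^*}\ell^{\sf X}(t_i,x;\omega)\,\mu^{\sf X}(dx),\qquad i=0,1,\dots,N.
$$
A standard triangle-inequality splitting then gives
$$
\sup_{0\le t\le T}\Bigl|\int\ell^{\sf X}(t,\cdot;\omega)\,d(\mu^{\sf X}_n-\mu^{\sf X})\Bigr|\le\max_{0\le i\le N}\Bigl|\int\ell^{\sf X}(t_i,\cdot;\omega)\,d(\mu^{\sf X}_n-\mu^{\sf X})\Bigr|+2\varepsilon(M+\mu^{\sf X}(O)),
$$
from which the desired locally uniform convergence follows by letting $n\to\infty$ and then $\varepsilon\downarrow 0$.

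The main obstacle is the upgrade from pointwise-in-$t$ vague convergence to \emph{locally uniform} convergence. This requires two ingredients that must be combined carefully: first, the joint continuity (hence uniform continuity on compacts) of $(t,x)\mapsto\ell^{\sf X}(t,x;\omega)$, which rests on the continuity property of the local time recorded in Section~7.2.1 (cf.\ \cite{IM}); second, a tightness-type uniform mass bound $\sup_n\mu^{\sf X}_n(O)<\infty$ on a relatively compact neighborhood $O$ of the (random) trajectory compact $K$, which is an automatic consequence of vague convergence. Once both are in hand, the partition argument above makes the rest of the proof routine.
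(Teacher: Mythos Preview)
Your proof is correct and follows the same route as the paper: apply formula (\ref{AS}) from Proposition~\ref{prS0} (with $m_o\equiv 1$) to rewrite both ${\sf A}^{\mu^{\Xi}_n}_t$ and ${\sf A}^{\mu^{\Xi}}_t$ as local-time integrals $\int_{I^*}\ell^{\sf X}(t,x)\,\mu^{\sf X}_n(dx)$ and $\int_{I^*}\ell^{\sf X}(t,x)\,\mu^{\sf X}(dx)$, thereby reducing the claim to the one-dimensional statement (\ref{ALn}). The paper records (\ref{ALn}) without a detailed argument and then simply says Proposition~\ref{prS0n} follows ``by means of (\ref{AS})''; your partition/tightness argument, using the joint continuity of $\ell^{\sf X}$ and the compactness of the trajectory on $[0,T]$ (available by conservativeness), is exactly the omitted justification. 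One cosmetic point: the trajectory compact $K$ should be taken in $I^*$ rather than $I$, since if $l_2$ is a regular reflecting boundary the path may touch it.
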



\end{document}